\newcommand{\hz}{\vspace{0.5cm}}
\renewcommand{\qed}{\hspace*{\fill}$\Box$\hz\pagebreak[1]}
\newtheorem{theorem}{Theorem}[section]
\newtheorem{cor}[theorem]{Corollary}
\newtheorem{prop}[theorem]{Proposition}
\newtheorem{rem}[theorem]{Remark}
\newtheorem{lemma}[theorem]{Lemma}
\newtheorem{defi}[theorem]{Definition}
\newtheorem{exam}[theorem]{Example}
\begin{document}
\title{Factorization and dilation problems for completely positive maps on
von Neumann algebras}
\author{Uffe Haagerup$^{(1)}$ and Magdalena Musat$^{(2)}$}
\address{$^{(1)}$ Department of Mathematical Sciences, University of
Copenhagen, Universitetsparken 5, 2100 Copenhagen {\O}, Denmark.\\
$^{(2)}$ Department of Mathematical Sciences, University of
Copenhagen, Universitetsparken 5, 2100 Copenhagen {\O}, Denmark.}
\email{$^{(1)}$haagerup@math.ku.dk\\$^{(2)}$musat@math.ku.dk}.

%\author{Magdalena Musat}
%\address{Department of Mathematics, 0112\\
%University of California, San Diego\\
%La Jolla, CA 92093-0112}
%\email{mmusat@math.ucsd.edu}
\date{}

\footnotetext {$^{(1)}$ Partially supported by the Danish Natural
Science Research Council.\\ \hspace*{0.48cm} $^{(2)}$ Partially
supported by the National Science Foundation, DMS-0703869.}

%\keywords{Grothendieck inequality for bilinear forms on
%C$^*$-algebras ;\ jointly completely bounded bilinear forms ; \
%Powers factors ; Tomita-Takesaki theory} \subjclass[2000]{Primary:
%46L10; 47L25.}

\maketitle

\begin{abstract}
We study factorization and
dilation properties of Markov maps between von Neumann algebras equipped with normal faithful states, i.e., completely positive unital maps which preserve the given states and also intertwine
their automorphism groups. The starting point for our investigation has been
the question of existence of
non-factorizable Markov maps,
as formulated
by C. Anantharaman-Delaroche.
We provide simple examples of
non-factorizable Markov maps on $M_n(\mathbb{C})$ for
all $n\geq 3$\,, as well as an example of a one-parameter semigroup
$(T(t))_{t\geq 0}$ of Markov maps on $M_4(\mathbb{C})$ such
that $T(t)$ fails to be factorizable for all small
values of $t > 0$\,. As applications, we solve in the negative
an open problem in quantum information theory
concerning an asymptotic version of the quantum Birkhoff conjecture, as well as
we sharpen the existing lower bound estimate for
the best constant in the noncommutative little Grothendieck
inequality.

\end{abstract}

%\maketitle
\section{Introduction}
\setcounter{equation}{0}

Motivated by the study of ergodic actions of free groups on noncommutative spaces, C. Anantharaman-Delaroche investigated in \cite{AD} the noncommutative analogue of G.-C. Rota's "Alternierende Verfahren" theorem from classical probability, asserting that if $T$ is a measure-preserving Markov operator on the probability space $(\Omega, \mu)$, then for every $p\geq 1$ and $f\in L^p(\Omega, \mu)$\,, the sequence $T^n(T^*)^n (f)$ converges almost everywhere, as $n\rightarrow \infty$\,. In the noncommutative setting, the probability space is replaced by a von Neumann algebra $M$, equipped with a normal faithful state $\phi$, and $T$ is now a unital, completely positive map on $M$ such that $\phi\circ T=\phi$\,. However, in this setting the existence of the adjoint map $T^*$ is not automatic. It turns out (see more precise references below) that it is equivalent to the fact that $T$ commutes with the modular automorphism group of $\phi$\,. This motivated the following definition considered in \cite{AD} (cf. Definition 2.6), where we have chosen to follow the slightly modified notation from \cite{Ric}:

\begin{defi}\label{defmarkov}
Let $(M\,, \phi)$ and $(N\,, \psi)$ be von Neumann algebras
equipped with normal faithful states $\phi$ and $\psi$\,,
respectively. A linear map $T\colon M\rightarrow N$ is called a $(\phi, \psi)$-Markov map if
\begin{enumerate}
\item [$(1)$] $T$ is completely positive
\item [$(2)$] $T$ is unital
\item [$(3)$] $\psi\circ T=\phi$
\item [$(4)$] $T\circ \sigma_t^{\phi}=\sigma_t^\psi\circ T$\,, for all $t\in \mathbb{R}$\,, where $(\sigma_t^{\phi})_{t\in \mathbb{R}}$ and $(\sigma_t^{\psi})_{t\in \mathbb{R}}$ denote the automorphism goups of the states $\phi$ and $\psi$\,, respectively.
\end{enumerate}
In particular, when $(M\,, \phi)=(N\,, \psi)$\,, we say that $T$ is a $\phi$-Markov map.
\end{defi}

Note that a linear map $T\colon M\rightarrow N$ satisfying conditions $(1)-(3)$ above is automatically normal. If, moreover, condition $(4)$ is satisfied, then
it was proved in \cite{AC} (see also Lemma 2.5 in \cite{AD}) that there exists a unique completely positive, unital map $T^*\colon N
\rightarrow M$ such that
\begin{equation}\label{eq5000567}\phi(T^*(y)x) =\psi(y T(x))\,, \quad x\in M\,, y\in N\,. \end{equation}
It is easy to show that $T^*$ is a $(\psi\,, \phi)$-Markov map.

\begin{rem}\label{rem78787777754}\rm
A special case of interest is the one of a $(\phi, \psi)$-Markov map $J\colon M\rightarrow N$ which is a $*$-monomorphism. In this case
$J(M)$ is a $\sigma^\psi$-invariant sub-von Neumann algebra of $N$. Hence, by \cite{Ta1}, there is a unique $\psi$-preserving normal faithful conditional expectation $\mathbb{E}_{J(M)}$ of $N$ onto $J(M)$, and thus the adjoint $J^*$ is given by $J^*=J^{-1}\circ \mathbb{E}_{J(M)}$\,.
\end{rem}

C. Anantharaman-Delaroche proved in \cite{AD} that the noncommutative analogue of Rota's theorem holds for Markov maps which are factorizable in the following sense (cf. Definition 6.2 in \cite{AD}):
\begin{defi}\label{defi898}
A $(\phi, \psi)$-Markov map $T\colon M\rightarrow N$  is called factorizable
if there exists a von Neumann algebra $P$ equipped with a faithful
normal state $\chi$\,, and $*$-monomorphisms $J_0\colon M\rightarrow P$ and $J_1\colon N\rightarrow
P$ such that $J_0$ is $(\phi, \chi)$-Markov and $J_1$ is $(\psi, \chi)$-Markov, satisfying, moreover, $T=J_0^*\circ J_1$\,.
\end{defi}

\begin{rem}\label{rem787877777}\rm

$(a)$ Note that if both
$\phi$ and $\psi$ are {\em tracial} states on $M$ and $N$,
respectively, and $T\colon M\rightarrow N$ is
factorizable, then the factorization can be chosen through a von Neumann algebra with a faithful normal {\em tracial} state, as well. This
can be achieved by replacing $(P, \chi)$ whose existence is ensured by the definition of factorizability by $(P_\chi,
\chi_{\vert_{P_\chi}})$\,, where $P_\chi$ denotes the centralizer of the state
$\chi$\,, since $J_0(M)\subseteq P_\chi$ and $J_1(N)\subseteq P_\chi$\,.

$(b)$ The class of factorizable $(\phi, \psi)$-Markov maps is known to be closed under composition, the adjoint operation, taking convex combinations and $w^*$-limits (see Proposition 2 in \cite{Ric}).
\end{rem}

C. Anantharaman-Delaroche raised in \cite{AD} the question whether all Markov maps are factorizable.
This was the starting point of investigation for our paper.
The class of maps which are known to be factorizable includes all Markov maps between abelian von Neumann algebras (as it was explained in \cite{AD}, Remark 6.3 $(a)$), the trace-preserving Markov maps on $M_2(\mathbb{C})$ (due to a result of B. K\"ummerer from \cite{Ku2}), as well as Schur multipliers associated to positive semi-definite real matrices having diagonal entries all equal to 1 (as shown by E. Ricard in \cite{Ric}).
It is therefore natural to further study the problem of factorizability of $\tau_n$-Markov maps on $M_n(\mathbb{C})$\,, for $n\geq 3$\,, where $\tau_n$ denotes the unique normalized trace on the $n\times n$ complex matrices.

In Section 2 we give a general characterization of factorizable $\tau_n$-Markov maps on $M_n(\mathbb{C})$, as well as a characterization of those $\tau_n$-Markov maps which lie in the convex hull of $*$-automorphisms of $M_n(\mathbb{C})$\,. We also discuss the case of Schur multipliers.
%In each case we obtain as a corollary a criteria (more suitable for concrete applications) for deciding whether a given $\tau_n$-Markov map is not factorizable, %respectively, it is factorizable, but it is not a convex combination of $*$-automorphisms of $M_n(\mathbb{C})$\,.

As an application, we construct in Section 3 several examples of non-factorizable $\tau_n$-Markov maps on $M_n(\mathbb{C})$\,, $n\geq 3$ (cf. Examples \ref{exp1} and \ref{exp2}), an example of a factorizable Schur multiplier on $(M_6(\mathbb{C})\,, \tau_6)$ which does not lie in the convex hull of $*$-automorphisms of $M_6(\mathbb{C})$ (cf. Example \ref{exp3}), as well as an example of a one-parameter semigroup
$(T(t))_{t\geq 0}$ of $\tau_4$-Markov maps on $M_4(\mathbb{C})$ such that $T(t)$ fails to be factorizable for all small values of $t > 0$ (see Theorem \ref{th990})\,.
This latter example is to be contrasted with a result of B. K\"ummerer and H. Maasen from \cite{KM}, asserting that if $(T(t))_{t\geq 0}$ is a one-parameter semigroup of $\tau_n$-Markov maps on $M_n(\mathbb{C})$\,, $n\geq 1$\,, such that each $T(t)$ is self-adjoint, then $T(t)$ is factorizable, for all $t\geq 0$\,. We have been informed of recent work of M. Junge, E. Ricard and D. Shlyakhtenko, where they have generalized K\"ummerer and Maasen's result to the case of a strongly continuous one-parameter semigroup of self-adjoint Markov maps on an arbitrary finite von Neumann algebra.  This result has been independently obtained by Y. Dabrowski (see the preprint \cite{Dab}).

In Section 4 we discuss the connection between the notion of factorizability and K\"ummerer's notions of dilation, respectively, of Markov dilation, that he introduced in \cite{Ku3}. The starting point for our analysis was a private communication by C. Koestler \cite{Ko}, who informed us in the Spring of 2008 that for a $\phi$-Markov map on a von Neumann algebra $M$, factorizability is equivalent to the existence of a dilation (in the sense of \cite{Ku3})\,, and that  K\"ummerer in his unpublished Habilitationsschrift \cite{Ku1} had constructed examples of $\tau_n$-Markov maps on $M_n(\mathbb{C})$\,, $n\geq 3$\,, having no dilations, and hence being non-factorizable.  The equivalence between factorizability and the existence of a dilation is based on an inductive limit argument also from K\"ummerer's unpublished work \cite{Ku1}. In Theorem  \ref{th77777777232323} and its proof, we provide the details of the argument communicated to us by C. Koestler. Moreover, we show that the existence of a dilation is equivalent to the--seemingly stronger--condition of existence of a Markov dilation in the sense of K\"ummerer.

Section 5 is devoted to the study of the so-called {\em Rota dilation property} of a Markov map, introduced by M. Junge, C. LeMerdy and Q. Xu in \cite{JMX}. This notion has proven to be very fruitful for
the development of semigroup theory in the noncommutative setting, and applications to noncommutative $L_p$-spaces and noncommutative harmonic analysis (see, e.g., \cite{Me}\,, \cite{JMe}). The Rota dilation property of a Markov map implies its factorizability, but it is more restrictive, as it forces the map to be self-adjoint. As a consequence of Theorem 6.6 in \cite{AD}, the square of any factorizable self-adjoint Markov map has the Rota dilation property.  Our main result in this section is that
there exists a self-adjoint $\tau_n$-Markov map $T$ on $M_n(\mathbb{C})$\,, for some positive integer $n$\,, such that $T^2$ does not have the Rota dilation property
(see Theorem \ref{th5656}), and therefore the analogue of Rota's classical dilation theorem for Markov operators does not hold, in general, in the noncommutative setting.

The existence of non-factorizable Markov maps turned out to have an interesting application to an open problem in quantum information theory, known as {\em the asymptotic quantum Birkhoff conjecture}. The conjecture, originating in joint work of A. Winter, J. A. Smolin and
F. Verstraete (cf. \cite{SVW}), asserts that if $T\colon M_n(\mathbb{C})\rightarrow M_n(\mathbb{C})$ is a $\tau_n$-Markov map, $n\geq 1$\,, then $T$ satisfies the following {\em asymptotic quantum Birkhoff property}:
\begin{equation*}\lim\limits_{k\rightarrow \infty} d_{\text{cb}}\bigg({\textstyle{\bigotimes\limits_{i=1}^k}} \,T\,, \text{conv}(\text{Aut}({\textstyle{\bigotimes\limits_{i=1}^k}} M_n(\mathbb{C})))\bigg)=0\,. \end{equation*}
We would like to thank V. Paulsen for bringing this problem to our attention.
In Section 6 we solve the conjecture in the negative (see Theorem \ref{thasybirkh})), by showing that every non-factorizable $\tau_n$-Markov map on $M_n(\mathbb{C})$\,, $n\geq 3$\,, fails the above asymptotic quantum Birkhoff property.

Finally, in Section 7, as an application of some of the techniques developed in the previous sections, we prove that the best constant in the noncommutative little Grothendieck inequality (cf. \cite{PiSh} and \cite{HM}) is strictly greater than 1, thus sharpening the existing lower bound estimate for it.

\section{Factorizability of $\tau_n$-Markov maps on $M_n(\mathbb{C})$} \setcounter{equation}{0}

Let $P$ be a von Neumann algebra and $n$ a positive integer.
Recall (see, e.g., \cite{KR} (Vol. II, Sect. 6.6) that a family
$(f_{ij})_{1\leq i, j\leq n}$ of elements of $P$ is a set of {\em matrix units} in $P$ if
the following conditions are satisfied: $f_{i j}f_{k l}=\delta_{jk}f_{il}$\,, $1\leq i,
j, k, l\leq n$\,, $f_{ij}^*=f_{ji}$\,, $1\leq
i, j\leq n$ and $\sum_{i=1}^n f_{ii}=1_P$\,. If this is the case, then
$F\colon =\text{Span}\{f_{ij}: 1\leq i, j\leq
n\}$ is a $*$-subalgebra of $P$ isomorphic to $M_n(\mathbb{C})$ and
$1_P\in F$\,.

The following result is well-known, but we include a proof for the
convenience of the reader.

\begin{lemma}\label{lem1}
Let $P$ be a von Neumann algebra, $n$ a positive integer, and
$(f_{ij})_{1\leq i, j\leq n}$\,, $(g_{ij})_{1\leq i, j\leq n}$
two sets of matrix units in $P$\,. Then there exists a unitary
operator $u\in P$ such that
\begin{eqnarray*}
uf_{ij}u^*=g_{ij}\,, \quad 1\leq i, j\leq n\,.
\end{eqnarray*}
\end{lemma}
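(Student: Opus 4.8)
The plan is to build the unitary $u$ out of the two "first columns" of the respective systems of matrix units, mimicking the standard finite-dimensional argument but being careful that everything lives in $P$.

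First I would set $p = f_{11}$ and $q = g_{11}$. These are projections in $P$, and the matrix unit relations give $f_{i1}f_{1j} = f_{ij}$, $f_{1i}f_{i1} = f_{11} = p$ for each $i$, and similarly for the $g$'s. In particular each $f_{i1}$ is a partial isometry with $f_{i1}^*f_{i1} = f_{1i}f_{i1} = p$ and $f_{i1}f_{i1}^* = f_{i1}f_{1i} = f_{ii}$, so $p \sim f_{ii}$ via $f_{i1}$, and likewise $q \sim g_{ii}$ via $g_{i1}$. Next I would define
\begin{equation}
u = \sum_{i=1}^n g_{i1} f_{1i}.
\end{equation}
The key computation is that $u^* = \sum_i f_{i1} g_{1i}$ and, using $f_{1i}f_{j1} = \delta_{ij} p$ and $g_{i1}g_{1j} = g_{ij}$, one gets $u u^* = \sum_{i,j} g_{i1} f_{1i} f_{j1} g_{1j} = \sum_i g_{i1} p \cdots$ — here I must be slightly more careful, since $f_{1i}f_{j1} = \delta_{ij}f_{11} = \delta_{ij}p$, hence $uu^* = \sum_i g_{i1}\, p\, g_{1i}$. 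This is not obviously $1_P$ because the middle factor is $p = f_{11}$, not $q = g_{11}$. So the honest statement is that $u$ as defined is only a partial isometry, and I would instead either (i) first reduce to the case $f_{11} = g_{11}$, or (ii) use a slightly different formula. Option (i): it suffices to produce, for any two systems of matrix units, a unitary conjugating one to a system whose $(1,1)$ entry agrees with that of the other; then compose. Concretely, $uu^* = \sum_i g_{i1}\,f_{11}\,g_{1i}$ and $u^*u = \sum_i f_{i1}\,g_{11}\,f_{1i}$, so $u$ is a partial isometry from the projection $e := \sum_i f_{i1} q f_{1i}$ onto the projection $e' := \sum_i g_{i1} p g_{1i}$. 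The cleanest route, which I would actually carry out, is:

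\textbf{Step 1: Reduce to $f_{11} = g_{11}$.} Since $p = f_{11}$ and $q = g_{11}$ are projections with $1_P = \sum_i f_{ii} = \sum_i f_{i1}pf_{1i}$ and $1_P = \sum_i g_{i1}qg_{1i}$, both $p$ and $q$ are "$n$-divisible" with the same ambient algebra, and in fact $p \sim q$ in $P$: indeed $p = f_{11} \sim f_{ii}$ for all $i$ and $q = g_{11} \sim g_{ii}$ for all $i$, while $\sum f_{ii} = 1 = \sum g_{ii}$; a Murray–von Neumann halving/comparison argument (or simply: the central supports both equal $1_P$ and the "ranks" match) yields a partial isometry $w \in P$ with $w^*w = p$, $ww^* = q$. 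Replacing $(f_{ij})$ by $(w f_{ij} w^*)$ — which is again a system of matrix units — I may assume $f_{11} = g_{11} =: e$.

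\textbf{Step 2: The case $f_{11} = g_{11} = e$.} Now set $u = \sum_{i=1}^n g_{i1} f_{1i}$. Then, using $f_{1i} f_{j1} = \delta_{ij} e = \delta_{ij} g_{11}$, we get $u u^* = \sum_{i,j} g_{i1} f_{1i} f_{j1} g_{1j} = \sum_i g_{i1} e\, g_{1i} = \sum_i g_{i1} g_{1i} = \sum_i g_{ii} = 1_P$, and symmetrically $u^* u = \sum_i f_{i1} g_{1i} g_{j1} f_{1j}$ evaluated with $g_{1i}g_{j1} = \delta_{ij} e = \delta_{ij} f_{11}$ gives $u^* u = 1_P$, so $u$ is unitary. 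Finally $u f_{kl} u^* = \sum_{i,j} g_{i1} f_{1i} f_{kl} f_{j1} g_{1j} = \sum_{i,j} g_{i1} (\delta_{ik}\delta_{lj} e)\, g_{1j} = g_{k1} e\, g_{1l} = g_{k1} g_{1l} = g_{kl}$, as required; here I used $f_{1i}f_{kl} = \delta_{ik}f_{1l}$ and $f_{1l}f_{j1} = \delta_{lj}f_{11} = \delta_{lj} e$.

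The only genuinely non-routine point is Step 1, the existence of the partial isometry $w$ with $w^*w = f_{11}$, $ww^* = g_{11}$; this is where one must invoke comparison theory in the von Neumann algebra $P$ rather than just algebraic matrix-unit manipulations. I expect this to be the main obstacle, and I would handle it by noting that $1_P$ is the orthogonal sum of $n$ copies of $f_{11}$ (via $f_{i1}$) and also of $n$ copies of $g_{11}$ (via $g_{i1}$), from which the equivalence $f_{11}\sim g_{11}$ follows by the usual cancellation/comparison argument in $P$ (e.g. Kaplansky's formula or the comparison theorem in \cite{KR}), uniformly across all central summands. Everything else is the formal computation above and needs no appeal to the von Neumann algebra structure beyond $*$-algebra identities.
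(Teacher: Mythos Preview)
Your overall strategy matches the paper's: the key non-routine step is indeed the comparison-theoretic fact that $f_{11}\sim g_{11}$ in $P$ (the paper cites \cite{KR}, Vol.~II, Ex.~6.9.14 for exactly this), after which everything is matrix-unit algebra. However, your Step~1 as written does not work: if $w$ is a partial isometry with $w^*w=f_{11}$ and $ww^*=g_{11}$, then $(wf_{ij}w^*)$ is \emph{not} a system of matrix units. Indeed $w=wf_{11}$, so $wf_{22}=wf_{11}f_{22}=0$ and hence $wf_{22}w^*=0$; the conjugation by $w$ collapses everything outside $f_{11}Pf_{11}$. So the reduction to $f_{11}=g_{11}$ fails as stated.

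The fix is simply to fold $w$ into the formula for $u$ rather than pre-conjugating: set $u=\sum_{i=1}^n g_{i1}\,w\,f_{1i}$, which is precisely what the paper does (with $v$ in place of your $w$). Then your Step~2 computations go through verbatim with $vf_{11}v^*=g_{11}$ and $v^*g_{11}v=f_{11}$ replacing the role of the common projection $e$: for instance $uu^*=\sum_i g_{i1}vf_{11}v^*g_{1i}=\sum_i g_{i1}g_{11}g_{1i}=\sum_i g_{ii}=1_P$, and $uf_{kl}u^*=g_{k1}vf_{11}v^*g_{1l}=g_{kl}$. Alternatively, you could repair Step~1 by extending $w$ to a unitary (possible since $1_P-f_{11}\sim 1_P-g_{11}$ as well, each being an orthogonal sum of $n-1$ copies of $f_{11}\sim g_{11}$), but this is an unnecessary detour.
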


\begin{proof}
By hypothesis, $(f_{ii})_{1\leq i\leq n}$
and $(g_{ii})_{1\leq i\leq n}$ are two sets of pairwise orthogonal
projections in $P$ with $\sum_{i=1}^n
f_{ii}=1_P=\sum_{i=1}^n
g_{ii}$\,, satisfying, moreover, $f_{11}\sim f_{22}\sim\ldots \sim f_{nn}$ and $g_{11}\sim g_{22}\sim\ldots \sim g_{nn}$\,, respectively, where $\sim$ denotes the relation of equivalence of projections. By, e.g., \cite{KR} (Vol. II, Ex.
6.9.14), it follows that $f_{11}\sim g_{11}$\,, i.e., there exists a partial
isometry $v\in P$ such that $v^*v=f_{11}$ and $vv^*=g_{11}$\,.
Set now $u\colon =\sum_{i=1}^n g_{i1}vf_{1i}$\,. It is
elementary to check that $u$ is a unitary in $P$\,.
Moreover, for $1\leq k, l\leq n$\,, $uf_{kl}u^*=\sum_{i, j=1}^n
g_{i1}vf_{1i}f_{kl}f_{j1}v^*g_{1j}=g_{k1}vf_{11}v^*f_{1l}=g_{k1}g_{11}g_{1l}=g_{kl}$\,,
which proves the result.
\end{proof}

By a result of M.-D. Choi (see \cite{Ch}), a linear map $T\colon M_n(\mathbb{C})\rightarrow M_n(\mathbb{C})$ is completely positive if and only if $T$ can be written in the form
\begin{equation}\label{eq1}
Tx= \sum\limits_{i=1}^d a_i^*xa_i\,, \quad x\in M_n(\mathbb{C})\,,
\end{equation}
for some $a_1\,, \ldots\,, a_d\in M_n(\mathbb{C})$\,. The condition that $T$ is unital is then equivalent to $\sum_{i=1}^d a_i^*a_i=1_n$\,, while the condition that $T$ is trace-preserving, i.e., $\tau_n\circ T=\tau_n$, is equivalent to $\sum_{i=1}^d
a_ia_i^*=1_n$. Here $1_n$ denotes the identity matrix in $M_n(\mathbb{C})$\,.

\begin{theorem}\label{th1}
Let $T: M_n(\mathbb{C})\rightarrow M_n(\mathbb{C})$ be a
$\tau_n$-Markov map, written in the form (\ref{eq1})
where $a_1\,, \ldots\,, a_d\in M_n(\mathbb{C})$ are chosen to be linearly
independent and satisfy $\sum_{i=1}^d a_i^*a_i=\sum_{i=1}^d
a_ia_i^*=1_n$. Then the following conditions are
equivalent:
\begin{itemize}
\item [$(i)$] $T$ is factorizable.
\item [$(ii)$] There exists a finite von Neumann algebra $N$ equipped
with a normal faithful tracial state $\tau_N$ and a unitary operator
$u\in M_n(N)=M_n(\mathbb{C})\otimes N$ such that
\begin{equation}\label{eq2}
Tx=(\text{id}_{M_n(\mathbb{C})}\otimes \tau_N)(u^*(x\otimes
1_N)u)\, , \quad x\in M_n(\mathbb{C})\,. \end{equation}
\item [$(iii)$] There exists a finite von Neumann algebra $N$ equipped
with a normal faithful tracial state $\tau_N$ and $v_1\,, \ldots \,,
v_d\in N$ such that $u\colon = \sum_{i=1}^d
a_i\otimes v_i$ is a unitary operator in
$M_n(\mathbb{C})\otimes N$ and
\begin{eqnarray*}
\tau_N(v_i^*v_j)= \delta_{ij}\,, \quad 1\leq i, j\leq d\,.
\end{eqnarray*}
\end{itemize}
\end{theorem}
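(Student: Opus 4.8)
The plan is to prove the four implications $(iii)\Rightarrow(ii)$, $(ii)\Rightarrow(i)$, $(ii)\Rightarrow(iii)$ and $(i)\Rightarrow(ii)$, which together give the asserted equivalence; of these only the last is serious. First, the two computational implications. For $(iii)\Rightarrow(ii)$ one keeps the same $N$, $\tau_N$ and $u=\sum_i a_i\otimes v_i$ and expands $u^*(x\otimes 1_N)u=\sum_{i,j}a_i^* x a_j\otimes v_i^* v_j$, so that applying $\mathrm{id}_{M_n(\mathbb{C})}\otimes\tau_N$ and using $\tau_N(v_i^* v_j)=\delta_{ij}$ gives exactly $Tx=\sum_i a_i^* x a_i$. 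For $(ii)\Rightarrow(i)$, set $P:=M_n(\mathbb{C})\otimes N$ with the faithful normal tracial state $\chi:=\tau_n\otimes\tau_N$ and define the $*$-monomorphisms $J_0(x):=x\otimes 1_N$ and $J_1(x):=u^*(x\otimes 1_N)u$. Since $\chi$ is tracial the modular groups are trivial, so condition $(4)$ of Definition~\ref{defmarkov} is automatic, and $J_0,J_1$ are visibly unital and $\chi$-preserving (for $J_1$ because $u$ is unitary), hence Markov; a direct slice computation identifies the adjoint $J_0^*$ with $\mathrm{id}_{M_n(\mathbb{C})}\otimes\tau_N$, whence $J_0^*\circ J_1=T$ and $T$ is factorizable through $(P,\chi)$.

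Next, $(ii)\Rightarrow(iii)$. Extend $a_1,\dots,a_d$ to a linear basis $a_1,\dots,a_{n^2}$ of $M_n(\mathbb{C})$ and write the unitary $u$ uniquely as $u=\sum_{i=1}^{n^2}a_i\otimes v_i$ with $v_i\in N$. Then $Tx=(\mathrm{id}\otimes\tau_N)(u^*(x\otimes 1_N)u)=\sum_{i,j=1}^{n^2}\tau_N(v_i^* v_j)\,a_i^* x a_j$, while by hypothesis $Tx=\sum_{i=1}^d a_i^* x a_i$. Comparing the two, and using that the $n^4$ linear maps $x\mapsto a_i^* x a_j$ ($1\le i,j\le n^2$) are linearly independent because $\{a_i\}$ is (the uniqueness half of Choi's theorem, visible from the fact that the associated Choi matrices $|\xi_i\rangle\langle\xi_j|$, with $\xi_i=(1\otimes a_i^*)\sum_k e_k\otimes e_k$, are linearly independent), one obtains $\tau_N(v_i^* v_j)=\delta_{ij}$ for $1\le i,j\le d$ and $\tau_N(v_i^* v_j)=0$ whenever $i>d$ or $j>d$. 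Faithfulness of $\tau_N$ forces $v_i=0$ for $i>d$, so $u=\sum_{i=1}^d a_i\otimes v_i$ with $\tau_N(v_i^* v_j)=\delta_{ij}$, which is $(iii)$ with the same $N$.

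Finally, $(i)\Rightarrow(ii)$, the main step. Suppose $T=J_0^*\circ J_1$ with $J_0,J_1\colon M_n(\mathbb{C})\to P$ Markov $*$-monomorphisms into $(P,\chi)$. Since $\tau_n$ is a trace, Remark~\ref{rem787877777}(a) lets us assume $\chi$ is a faithful normal tracial state on $P$, so condition $(4)$ becomes vacuous. Because $J_0$ is a unital $*$-monomorphism of $M_n(\mathbb{C})$, the standard structure theory of matrix subalgebras (see, e.g., \cite{KR}) gives an identification $P\cong M_n(\mathbb{C})\otimes Q$, where $Q:=J_0(M_n(\mathbb{C}))'\cap P$, under which $J_0(x)=x\otimes 1_Q$; traciality of $\chi$ together with $\chi\circ J_0=\tau_n$ then forces $\chi=\tau_n\otimes\tau_Q$, where $\tau_Q:=\chi(1\otimes\,\cdot\,)$ is a faithful normal tracial state on $Q$. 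Now $(J_1(e_{ij}))_{i,j}$ and $(e_{ij}\otimes 1_Q)_{i,j}$ are two systems of matrix units in $P=M_n(Q)$, so Lemma~\ref{lem1} provides a unitary $w\in P$ with $w(e_{ij}\otimes 1_Q)w^*=J_1(e_{ij})$ for all $i,j$; by linearity $J_1(x)=w\,(x\otimes 1_Q)\,w^*=w\,J_0(x)\,w^*$ for every $x$. As in the first paragraph $J_0^*=\mathrm{id}_{M_n(\mathbb{C})}\otimes\tau_Q$, hence $Tx=J_0^*(J_1(x))=(\mathrm{id}\otimes\tau_Q)\bigl(w(x\otimes 1_Q)w^*\bigr)$; taking $N:=Q$, $\tau_N:=\tau_Q$ and $u:=w^*$ yields $(ii)$.

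The crux is this last implication, and within it the two non-formal points: the reduction to a tracial intermediate state $\chi$ (which is precisely Remark~\ref{rem787877777}(a), valid because $J_0(M_n(\mathbb{C}))$ and $J_1(M_n(\mathbb{C}))$ lie in the centralizer $P_\chi$), and the passage from the two unital embeddings $J_0,J_1$ of $M_n(\mathbb{C})$ into $P$ to conjugation by a single unitary. It is the latter — carried out via the tensor splitting $P\cong M_n(\mathbb{C})\otimes Q$ followed by Lemma~\ref{lem1} — that both does the real work and pins $T$ down to the concrete slice-map form $(ii)$; everything else is routine algebra once this is in place.
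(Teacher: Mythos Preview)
Your proof is correct and follows essentially the same route as the paper's: the same reduction to a tracial ambient algebra via Remark~\ref{rem787877777}(a), the same tensor splitting $P\cong M_n(\mathbb{C})\otimes Q$ along one embedding together with Lemma~\ref{lem1} to realize the other embedding as conjugation by a unitary for $(i)\Rightarrow(ii)$, and the same basis-extension/coefficient-comparison argument for $(ii)\Rightarrow(iii)$ (the paper phrases the linear independence of the maps $x\mapsto a_i^*xa_j$ via the isomorphism $a\otimes b\mapsto L_aR_b$ of $M_n(\mathbb{C})\otimes M_n(\mathbb{C})$ with ${\mathcal B}(M_n(\mathbb{C}))$, which is equivalent to your Choi-matrix formulation). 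The remaining implications are handled identically.
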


\begin{proof}
We first show that $(i)\Rightarrow (ii)$\,. Assume that $T$ is
factorizable. By Remark \ref{rem787877777} $(a)$, there exists a finite von
Neumann algebra $P$ with a normal faithful tracial state $\tau_P$
and two unital $*$-monomorphisms $\alpha\,,
\beta:M_n(\mathbb{C})\rightarrow P$ such that $T=\beta^*\circ
\alpha$\,. Note that $\alpha$ and $\beta$ are automatically
$(\tau_n\,, \tau_P)$-Markov maps, since $\tau_n$ is the unique normalized trace
on $M_n(\mathbb{C})$\,. Let $\{e_{ij}\}_{1\leq i, j\leq n}$ be the
standard matrix units in $M_n(\mathbb{C})$ and set
$f_{ij}\colon =\alpha(e_{ij})$\,, respectively, $g_{ij}\colon =
\beta(e_{ij})$\,, for all $1\leq i, j\leq n$\,. Choose now a unitary
operator $u\in P$ as in Lemma \ref{lem1}\,. Then $\beta(x)= u\alpha(x)u^*$\,, for all $x\in M_n(\mathbb{C})$\,.
Equivalently, $\alpha(x)=u^*\beta(x)u$\,, for all $x\in
M_n(\mathbb{C})$\,. Consider now the relative commutant
\[ N\colon =(\beta(M_n(\mathbb{C})))^\prime \cap P=\{g_{ij}: 1\leq
i, j\leq n\}^\prime \cap P\,, \] and let $\tau_N$ be the restriction
of $\tau_P$ to $N$\,. Since the map $\sum_{i, j=1}^n e_{ij}\otimes
x_{ij}\mapsto \sum_{i,j=1}^n g_{ij}x_{ij}$\,, where
$x_{ij}\in N$\,, $1\leq i, j\leq n$\,, defines a $*$-isomorphism of $M_n(\mathbb{C})\otimes
N$ onto $P$ (see, e.g., \cite{KR}, Vol. II, Sect. 6.6.), we can make the identifications
$P=M_n(\mathbb{C})\otimes N$\,, $\tau_P=\tau_n\otimes \tau_N$ and $\beta(x)=x\otimes 1_{N}$\,, $x\in M_n(\mathbb{C})$\,. This
implies that $\alpha(x)=u^*(x\otimes 1_N)u$\,, $x\in
M_n(\mathbb{C})$\,. Since $T=\beta^*\circ \alpha=\beta^{-1}\circ
\mathbb{E}_{\beta(M_n(\mathbb{C}))}\circ \alpha$ (see Remark \ref{rem78787777754}), then
\[ Tx\otimes
1_N=\mathbb{E}_{M_n(\mathbb{C})\otimes
{1_N}}(u^*(x\otimes 1_N)u)\,, \quad x\in M_n(\mathbb{C})\,, \] where
$\mathbb{E}_{M_n(\mathbb{C})\otimes {1_N}}$ is the unique
$\tau_P=\tau_n\otimes \tau_N$-preserving conditional expectation of
$M_n(\mathbb{C})\otimes N$ onto $M_n(\mathbb{C})\otimes {1_N}$\,. Then (\ref{eq2}) follows and the
implication is proved.

Conversely, assume
that $(ii)$ holds. Define maps $\alpha$, $\beta\colon
M_n(\mathbb{C})\rightarrow M_n(\mathbb{C})\otimes N$ by
$\alpha(x)\colon =u^*(x\otimes 1_N)u$\,, respectively, $\beta(x)\colon=x\otimes 1_{N}$\,, for all
$x\in M_n(\mathbb{C})$\,. Then $\alpha$ and $\beta$ are $(\tau_n\,,
\tau_n\otimes \tau_N)$-Markov $*$-monomorphisms of $M_n(\mathbb{C})$
into $M_n(\mathbb{C})\otimes N$ satisfying $T=\beta^*\circ
\alpha$\,, which proves that $T$ is factorizable.

Next we prove the implication $(ii)\Rightarrow (iii)$\,. Assume that
$(ii)$ holds and choose a von Neumann algebra $N$ with a normal faithful tracial state $\tau_N$ and a unitary operator $u\in
M_n(\mathbb{C})\otimes N$ satisfying (\ref{eq2}). Since $a_1\,, \ldots \,, a_d\in
M_n(\mathbb{C})$ are linearly independent, we can extend the set
$\{a_1\,, \ldots\,, a_d\}$ to an algebraic basis $\{a_1\,, \ldots
\,, a_{n^2}\}$ for $M_n(\mathbb{C})$\,. Then
$u$ has a representation of the form $u=
\sum_{i=1}^{n^2} a_i\otimes v_i$\,,
where $v_1\,, \ldots \,, v_{n^2}\in N$\,. By (\ref{eq1}) and
(\ref{eq2}) we deduce that
\begin{eqnarray}\label{eq3}
\sum\limits_{i=1}^d a_i^*xa_i= (\text{id}_{M_n(\mathbb{C})}\otimes
\tau_N)\left(\sum\limits_{i, j=1}^{n^2} a_i^*xa_j\otimes v_i^*
v_j\right)= \sum\limits_{i, j=1}^{n^2} \tau_N(v_i^*v_j)a_i^*xa_j\,.
\end{eqnarray}
For $a\in M_n(\mathbb{C})$ we let $L_a$ and $R_a$
denote, respectively, the operators of left and right multiplication by $a$ on
$M_n(\mathbb{C})$\,, i.e., $L_ax=ax$\,, $R_ax=xa$\,, $x\in M_n(\mathbb{C})$\,.
It is well-known that the map $\sum_{k=1}^r
a_k\otimes b_k \mapsto \sum_{k=1}^r L_{a_k}\otimes R_{b_k}$
defines a vector space isomorphism of $M_n(\mathbb{C})\otimes
M_n(\mathbb{C})$ onto ${\mathcal B}(M_n(\mathbb{C}))$\,. By
(\ref{eq3}),
\begin{eqnarray}\label{eq4}
\sum\limits_{i=1}^d a_i^*\otimes a_i = \sum\limits_{i, j=1}^{n^2}
\tau_N(v_i^* v_j)a_i^*\otimes a_j\,.
\end{eqnarray}
Moreover, the set $\{a_i^*\otimes a_j: 1\leq i, j\leq n\}$ is an
algebraic basis for $M_n(\mathbb{C})\otimes M_n(\mathbb{C})$\,, so
in particular, this set is linearly independent. Therefore
(\ref{eq4}) implies that
\begin{eqnarray*}\tau_N(v_i^*v_j)&=&\left\{\begin{array}{ll}
                              \delta_{ij} & \,\mbox{if} \;\; \;1\leq i,
j\leq d\\
                                       0 & \,\mbox{else}\,.\\
                                \end{array}
                        \right.
\end{eqnarray*}
In
particular, $\tau_N(v_i^* v_i)=0$ for $i> d$\,, which by the
faithfulness of $\tau_N$ implies that $v_i=0$\,, for all $i> d$\,.
Hence $u=\sum_{i=1}^d a_i \otimes v_i$\,, which proves
$(iii)$\,.

It remains to prove that $(iii)$ implies $(ii)$\,.
Choose $(N\,, \tau_N)$ and operators $v_1\,, \ldots \,, v_d\in N$ as
in $(iii)$\,. Then $u\colon = \sum_{i=1}^d
a_i\otimes v_i$ is a unitary operator in $M_n(\mathbb{C})\otimes N$ and
$\tau(v_i^*v_j)=\delta_{ij}$\,, for $1\leq i, j\leq d$\,. Thus
\[ T(x)=\sum\limits_{i=1}^d a_i^*xa_i=\sum\limits_{i, j=1}^n
\tau_N(v_i^*v_j)a_i^*xa_j=(\text{id}_{M_n(\mathbb{C})}\otimes
\tau_N)(u^*(x\otimes 1_N)u)\,, \] which gives $(ii)$ and the proof is
complete.
\end{proof}

\begin{cor}\label{cor1}
Let $T\colon M_n(\mathbb{C})\rightarrow M_n(\mathbb{C})$ be a
$\tau_n$-Markov map of the form (\ref{eq1}),
where $a_1\,, \ldots\,, a_d\in M_n(\mathbb{C})$ and
$\sum_{i=1}^d a_i^* a_i=\sum_{i=1}^d a_i a_i^* =1_n$\,.
If $d\geq 2$ and the set $\{a_i^*a_j: 1\leq i, j\leq d\}$ is
linearly independent, then $T$ is not factorizable.
\end{cor}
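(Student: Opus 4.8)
The plan is to assume $T$ is factorizable and derive a contradiction using the characterization in Theorem \ref{th1}. First I would note that the stated hypothesis already forces $a_1,\dots,a_d$ to be linearly independent, so that Theorem \ref{th1} is applicable to $T$: if $\sum_i \lambda_i a_i = 0$, then for each $j$ we obtain $\sum_i \lambda_i\, a_j^* a_i = 0$, and since $\{a_j^* a_i : 1\le i\le d\}$ sits inside the linearly independent family $\{a_k^* a_l : 1\le k,l\le d\}$, all the $\lambda_i$ must vanish.

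Assuming now that $T$ is factorizable, I would invoke the implication $(i)\Rightarrow(iii)$ of Theorem \ref{th1} to produce a finite von Neumann algebra $N$ with a normal faithful tracial state $\tau_N$ and elements $v_1,\dots,v_d\in N$ such that $u:=\sum_{i=1}^d a_i\otimes v_i$ is a unitary in $M_n(\mathbb{C})\otimes N$ and $\tau_N(v_i^*v_j)=\delta_{ij}$. Expanding $u^*u = 1_n\otimes 1_N$ gives
\[
\sum_{i,j=1}^d a_i^* a_j \otimes v_i^* v_j = 1_n\otimes 1_N .
\]
Since $\{a_i^* a_j : 1\le i,j\le d\}$ is linearly independent in $M_n(\mathbb{C})$, I would choose functionals $\omega_{ij}$ on $M_n(\mathbb{C})$ dual to this family (extend it to a vector-space basis of $M_n(\mathbb{C})$ and take the dual basis), and apply the slice maps $\omega_{ij}\otimes\text{id}_N$ to the above identity; this yields $v_i^* v_j = \omega_{ij}(1_n)\,1_N$, i.e.\ each $v_i^* v_j$ is a scalar multiple of $1_N$. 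Applying $\tau_N$ and using $\tau_N(v_i^* v_j)=\delta_{ij}$ then pins this scalar down to $\delta_{ij}$, so that $v_i^* v_j = \delta_{ij}\,1_N$.

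Finally, I would observe that $v_1,\dots,v_d$ are then isometries in $N$ with pairwise orthogonal ranges, so the projections $p_i:=v_i v_i^*$ satisfy $p_i p_j = 0$ for $i\ne j$, whence $\sum_{i=1}^d p_i$ is a projection dominated by $1_N$. Since $N$ is finite and the trace is tracial, $\tau_N(p_i)=\tau_N(v_i v_i^*)=\tau_N(v_i^* v_i)=\tau_N(1_N)=1$, and therefore $d=\sum_{i=1}^d\tau_N(p_i)=\tau_N\big(\sum_{i=1}^d p_i\big)\le \tau_N(1_N)=1$, contradicting the assumption $d\ge 2$. Hence $T$ is not factorizable. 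The only slightly delicate point is the passage from the tensor identity $\sum_{i,j} a_i^* a_j\otimes v_i^* v_j = 1_n\otimes 1_N$ to the scalar relations $v_i^* v_j = c_{ij}1_N$ via the slice maps; once Theorem \ref{th1} is in hand everything else is routine, so I do not anticipate a genuine obstacle.
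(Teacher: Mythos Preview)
Your proof is correct and follows essentially the same route as the paper: apply Theorem~\ref{th1}(iii), use the linear independence of $\{a_i^*a_j\}$ to deduce $v_i^*v_j=\delta_{ij}1_N$, and obtain a contradiction from $d\ge 2$ in a finite von Neumann algebra. The only cosmetic differences are that the paper subtracts $1_n\otimes 1_N=\sum_i a_i^*a_i\otimes 1_N$ first to get a homogeneous relation before slicing, and for the endgame it observes that $v_1,v_2$ are isometries hence unitaries in the finite algebra $N$, making $v_1^*v_2=0$ impossible, whereas you use the equivalent trace inequality $d=\sum_i\tau_N(v_iv_i^*)\le 1$.
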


\begin{proof}
Assume
that $T$ is factorizable. Since the linear independence of the set $\{a_i^*a_j: 1\leq i,
j\leq d\}$ implies that the set $\{a_i: 1\leq i\leq d\}$ is linearly
independent, as well, Theorem \ref{th1} applies. Hence, by the equivalence
$(ii)\Leftrightarrow (iii)$ therein, there
exists a finite von Neumann algebra $N$ with a normal, faithful
tracial state $\tau_N$ and operators $v_1\,, \ldots\,, v_d\in N$
such that $u\colon =\sum_{i=1}^d a_i\otimes
v_i\in M_n(\mathbb{C})\otimes N=M_n(N)$ is a unitary operator and
$\tau_N(a_i^*a_j)=\delta_{ij}$\,, for $1\leq i, j\leq d$\,. Then
\[ \sum\limits_{i, j=1}^d a_i^*a_j\otimes
(v_i^*v_j-\delta_{ij}1_N)=u^*u-\left(\sum\limits_{i=1}^d
a_i^*a_i\right)\otimes
1_N=1_{M_n(N)}-{1_n}\otimes {1_N}=0_{M_n(N)}\,. \] By
the linear independence of the set $\{a_i^*a_j: 1\leq i, j\leq d\}$
it follows that for every functional $\phi\in N^*$\,, we have $\phi(v_i^*v_j-\delta_{ij}1_N)=0$\,, for all $1\leq i, j\leq d$\,.
Hence
\[ v_i^*v_j=\delta_{ij}1_N\,, \quad 1\leq i, j\leq d\,. \]
Since $d\geq 2$\,, we infer in particular that $v_1^*v_2=0_N$ and
$v_1^*v_1=v_2^*v_2=1_N$\,. The latter condition ensures that $v_1$
and $v_2$ are unitary operators, since $N$ is a finite von Neumann algebra. But this leads to a contradiction with the fact that
$v_1^*v_2=0_N$\,. This proves that $T$ is not factorizable.
\end{proof}

Let $\text{Aut}(M)$ denote the set of $*$-automorphisms of a von
Neumann algebra $M$\,. If $M=M_n(\mathbb{C})$\,, for some $n\in \mathbb{N}$\,, then $\text{Aut}(M_n(\mathbb{C}))=\{\text{ad}(u): u\in
{\mathcal U}(n)\}$\,, where $\text{ad}(u)x=uxu^*$, $x\in
M_n(\mathbb{C})$\,, and ${\mathcal U}(n)$ denotes the unitary group
of $M_n(\mathbb{C})$\,. Since ${\mathcal B}(M_n(\mathbb{C}))$ is
finite dimensional, the convex hull of
$\text{Aut}(M_n(\mathbb{C}))$\,, denoted by
$\text{conv}(\text{Aut}(M_n(\mathbb{C}))$\,, is closed in the norm topology on ${\mathcal B}(M_n(\mathbb{C}))$\,. Further, let us denote by
${\mathcal F}{\mathcal M}(M_n(\mathbb{C}))$ the set of factorizable $\tau_n$-Markov maps on
$M_n(\mathbb{C})$\,. By Remark \ref{rem787877777} $(b)$\,,
\begin{equation}\label{eq400068}
\text{conv}(\text{Aut}(M_n(\mathbb{C}))\subseteq {\mathcal F}{\mathcal M}(M_n(\mathbb{C}))\,.
\end{equation}
Note that for $n=2$ the two sets above are equal, as they are further equal to the set of $\tau_2$-Markov maps on $M_2(\mathbb{C})$\,, as shown by K\"ummerer in \cite{Ku2}.

\begin{prop}\label{prop2}
Let $T: M_n(\mathbb{C})\rightarrow M_n(\mathbb{C})$ be a
$\tau_n$-Markov map written in the form
(\ref{eq1})\,, where $a_1\,, \ldots \,, a_d\in M_n(\mathbb{C})$ are
linearly independent and $\sum_{i=1}^d a_i^*
a_i=\sum_{i=1}^d a_i a_i^* =1_n$\,. Then the following
conditions are equivalent:
\begin{enumerate}
\item [$(1)$] $T\in \text{conv}(\text{Aut}(M_n(\mathbb{C}))$\,.
\item [$(2)$] $T$ satisfies condition $(ii)$ of Theorem \ref{th1} with $N$
abelian.
\item [$(3)$] $T$ satisfies condition $(iii)$ of Theorem \ref{th1} with $N$
abelian.
\end{enumerate}
\end{prop}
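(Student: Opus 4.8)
The plan is to prove the cycle $(1)\Rightarrow(2)\Rightarrow(3)\Rightarrow(1)$, exploiting Theorem~\ref{th1} for the middle equivalence $(2)\Leftrightarrow(3)$, which is immediate: the argument in the proof of $(ii)\Leftrightarrow(iii)$ in Theorem~\ref{th1} never uses anything about $N$ beyond it being a finite von Neumann algebra with a normal faithful tracial state, so it applies verbatim with the extra hypothesis that $N$ is abelian. Thus the substance is in $(1)\Rightarrow(2)$ and $(3)\Rightarrow(1)$.

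For $(1)\Rightarrow(2)$, suppose $T=\sum_{k=1}^m \lambda_k\,\mathrm{ad}(w_k)$ with $w_k\in\mathcal U(n)$, $\lambda_k>0$, $\sum_k\lambda_k=1$. Take $N=\ell^\infty(\{1,\dots,m\})=\mathbb C^m$ with the tracial state $\tau_N(f)=\sum_{k=1}^m\lambda_k f(k)$ (which is faithful since all $\lambda_k>0$), and set $u=\sum_{k=1}^m w_k^*\otimes e_k\in M_n(\mathbb C)\otimes N$, where $e_k$ is the $k$-th minimal projection of $N$. Then $u^*u=\sum_k w_kw_k^*\otimes e_k=1_n\otimes 1_N$ and likewise $uu^*=1_n\otimes 1_N$, so $u$ is unitary, and
\begin{equation*}
(\mathrm{id}\otimes\tau_N)(u^*(x\otimes 1_N)u)=\sum_{k=1}^m\lambda_k\, w_k x w_k^*=Tx\,,
\end{equation*}
which is exactly condition $(ii)$ with $N$ abelian.

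For $(3)\Rightarrow(1)$, assume $T$ satisfies $(iii)$ with $N$ abelian, so $N=L^\infty(\Omega,\mu)$ for some probability space and $u=\sum_{i=1}^d a_i\otimes v_i$ is unitary in $M_n(\mathbb C)\otimes N$ with $\int_\Omega \bar v_i v_j\,d\mu=\delta_{ij}$. Because $N$ is commutative, $u$ is (essentially) a measurable field $\omega\mapsto u(\omega)$ of matrices in $M_n(\mathbb C)$, and unitarity of $u$ means $u(\omega)\in\mathcal U(n)$ for $\mu$-a.e.\ $\omega$. Writing $v_i(\omega)$ for the scalar fields, we have $u(\omega)=\sum_i a_i v_i(\omega)$, hence
\begin{equation*}
Tx=(\mathrm{id}\otimes\tau_N)(u^*(x\otimes 1_N)u)=\int_\Omega u(\omega)^*\,x\,u(\omega)\,d\mu(\omega)=\int_\Omega \mathrm{ad}(u(\omega)^*)(x)\,d\mu(\omega)\,,
\end{equation*}
so $T$ is an average of the $*$-automorphisms $\mathrm{ad}(u(\omega)^*)$ over $(\Omega,\mu)$. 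It remains to conclude that such a $\mu$-barycentre of the compact set $\mathrm{Aut}(M_n(\mathbb C))\subset\mathcal B(M_n(\mathbb C))$ actually lies in $\mathrm{conv}(\mathrm{Aut}(M_n(\mathbb C)))$; this holds because $\mathcal B(M_n(\mathbb C))$ is finite-dimensional, $\mathrm{Aut}(M_n(\mathbb C))$ is compact (as $\mathcal U(n)$ is compact and $u\mapsto\mathrm{ad}(u)$ is continuous), and the barycentre of any Borel probability measure supported on a compact subset $K$ of a finite-dimensional space lies in $\mathrm{conv}(K)$, which is itself compact (Carathéodory). The main obstacle I anticipate is making the measurable-field / direct-integral description of the unitary $u$ over the abelian algebra $N$ fully rigorous — in particular justifying that $u$ unitary forces $u(\omega)$ unitary a.e.\ and that the weak integral defining $Tx$ coincides pointwise with the integral of $\mathrm{ad}(u(\omega)^*)$; once that is set up, the convexity conclusion is a soft finite-dimensional fact.
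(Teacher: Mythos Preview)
Your proof is correct and follows essentially the same approach as the paper: the paper proves $(1)\Leftrightarrow(2)$ directly and gets $(2)\Leftrightarrow(3)$ from the proof of Theorem~\ref{th1}, while you run the cycle $(1)\Rightarrow(2)\Rightarrow(3)\Rightarrow(1)$, but the content is the same. The one technical difference worth noting concerns exactly the obstacle you flag at the end: instead of writing $N=L^\infty(\Omega,\mu)$ and worrying about a.e.\ unitarity of the measurable field $u(\omega)$, the paper identifies the abelian $C^*$-algebra $N$ with $C(\widehat{N})$ via its Gelfand spectrum, so that $u\in M_n(C(\widehat{N}))=C(\widehat{N},M_n(\mathbb{C}))$ is a \emph{continuous} function and $u(t)$ is unitary at \emph{every} point $t\in\widehat{N}$; the normal tracial state then corresponds to a regular Borel probability measure $\mu$ on $\widehat{N}$, and $Tx=\int_{\widehat{N}} u(t)^* x\,u(t)\,d\mu(t)$ with no measure-theoretic subtleties.
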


\begin{proof}
We first show that $(1)\Rightarrow (2)$\,. Assume that $T\in
\text{conv}(\text{Aut}(M_n(\mathbb{C}))$\,. Then there exist $u_1\,, \ldots \,, u_s\in {\mathcal U}(n)$ and positive real numbers $c_1\,, \ldots \,, c_s$ with sum equal to 1, for some positive integer $s$, so that
\begin{eqnarray*}
Tx= \sum\limits_{i=1}^s c_i u_i^*x u_i\,, \quad x\in
M_n(\mathbb{C})\,.
\end{eqnarray*}
 Next, consider the
abelian von Neumann algebra $N \colon =l^\infty (\{1\,, \ldots\,, s\})$
with faithful tracial state $\tau_N$ given by $\tau_N(a)\colon =
\sum_{i=1}^s c_i a_i$\,, $a=(a_1\,, \ldots \,, a_s)\in N$\,. Set $u\colon=(u_1\,, \ldots \,, u_s)\in l^\infty
(\{1\,, \ldots \,, s\}\,, M_n(\mathbb{C}))=M_n(\mathbb{C})\otimes
N$\,. Then $u$ is unitary and relation (\ref{eq2}) is
satisfied.

We now show that $(2)\Rightarrow (1)$\,. Assume that
$(2)$ holds, i.e., there exists an abelian von Neumann algebra $N$
with a normal faithful tracial state $\tau_N$ and a unitary operator
$u\in M_n(N)$ such that (\ref{eq2}) is satisfied. Let
$\widehat{N}$ denote the spectrum of $N$ (i.e., the set of
non-trivial multiplicative linear functionals on $N$)\,. Then
$\widehat{N}$ is compact in the w$^*$-topology, $N\simeq
C(\widehat{N})$ and $\tau_N$ corresponds to a regular Borel
probability measure $\mu$ on $\widehat{N}$\,. By identifying $N$
with $C(\widehat{N})$\,, we have $u\in
M_n(C(\widehat{N}))=C(\widehat{N}\,, M_n(\mathbb{C}))$ and
\begin{eqnarray*}
Tx= \int_{\widehat{N}} {u(t)}^* x u(t) d\mu(t)\,, \quad
x\in M_n(\mathbb{C})\,.
\end{eqnarray*}
Thus $T$ lies in the norm-closure of ${\text{conv}(\text{ad}(u(t)^*) : t\in
\widehat{N})}.$ Since
$\text{conv}(\text{Aut}(M_n(\mathbb{C}))$ is a closed set in
${\mathcal B}(M_n(\mathbb{C}))$\,, condition $(1)$ follows.

The implication $(2)\Rightarrow (3)$ follows
immediately from the proof of the corresponding implication
$(ii)\Rightarrow (iii)$ in Theorem \ref{th1}\,.
\end{proof}

\begin{cor}\label{cor2}
Let $T\colon M_n(\mathbb{C})\rightarrow M_n(\mathbb{C})$ be a
$\tau_n$-Markov map of the form (\ref{eq1}),
where $a_1\,, \ldots\,, a_d\in M_n(\mathbb{C})$ are self-adjoint,
$\sum_{i=1}^d a_i^2 =1_n$ and $a_ia_j=a_ja_i$\,, for all $1\leq i, j\leq d$\,.
Then the following hold:
\begin{itemize}
\item [$(a)$] $T$ is factorizable.
\item [$(b)$] If $d\geq 3$ and the set $\{a_ia_j: 1\leq i\leq j\leq
d\}$  is linearly independent, then $T\notin
\text{conv}(\text{Aut}(M))$\,.
\end{itemize}
\end{cor}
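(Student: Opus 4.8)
The plan is to derive both parts from the earlier results, using the commuting self-adjoint family $\{a_i\}$ to build an explicit tracial model. For part $(a)$, the natural candidate for $N$ is a commutative algebra carrying a symmetric noncommutative ``coin flip'': take $N$ to be the group von Neumann algebra (equivalently, $C^*$-algebra) of $\mathbb{Z}_2^{*d}$, or more concretely let $N$ be generated by $d$ commuting self-adjoint unitaries $\varepsilon_1,\dots,\varepsilon_d$ of trace zero — i.e.\ $N = L^\infty(\{-1,1\}^d)$ with normalized counting measure $\tau_N$ — and set $v_i := \varepsilon_i$. Since the $a_i$ are self-adjoint and mutually commuting and $\sum_i a_i^2 = 1_n$, the element $u := \sum_{i=1}^d a_i\otimes \varepsilon_i \in M_n(\mathbb{C})\otimes N$ is self-adjoint, and one computes $u^2 = \sum_{i,j} a_i a_j \otimes \varepsilon_i\varepsilon_j = \sum_i a_i^2 \otimes 1_N + \sum_{i\neq j} a_i a_j\otimes \varepsilon_i\varepsilon_j$; since $a_i a_j = a_j a_i$ and $\varepsilon_i\varepsilon_j = \varepsilon_j\varepsilon_i$ while the cross terms can be paired, this need not obviously vanish. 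The cleaner route is to take $N$ generated by $d$ \emph{anticommuting} self-adjoint unitaries $c_i$ with $c_ic_j + c_jc_i = 2\delta_{ij}$ — a CAR/Clifford algebra of dimension $2^d$, which is finite-dimensional (a matrix algebra or sum of two) and carries a faithful tracial state $\tau_N$ with $\tau_N(c_i^*c_j) = \delta_{ij}$ — but then $u = \sum a_i\otimes c_i$ is no longer self-adjoint in the naive sense; instead, since $a_i$ commute among themselves and the $c_i$ anticommute, $u^2 = \sum_{i<j}(a_ia_j - a_ja_i)\otimes(\text{something}) + \sum_i a_i^2\otimes 1_N = 1_{M_n(N)}$ because all off-diagonal terms cancel. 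Thus $u$ is a self-adjoint unitary, hence unitary, and $\tau_N(c_i^*c_j)=\delta_{ij}$, so condition $(iii)$ of Theorem \ref{th1} holds and $T$ is factorizable.

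For part $(b)$, I want to apply Corollary \ref{cor1}: it suffices to show that under the hypothesis $d\geq 3$ and $\{a_ia_j : 1\le i\le j\le d\}$ linearly independent, we cannot have $T\in\text{conv}(\text{Aut}(M_n(\mathbb{C})))$. By Proposition \ref{prop2}, membership in $\text{conv}(\text{Aut})$ is equivalent to condition $(iii)$ of Theorem \ref{th1} holding with $N$ \emph{abelian}, i.e.\ there exist an abelian $(N,\tau_N)$ and $v_1,\dots,v_d\in N$ with $u = \sum a_i\otimes v_i$ unitary in $M_n(N)$ and $\tau_N(v_i^*v_j) = \delta_{ij}$. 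Expanding $u^*u = 1$ and using self-adjointness of the $a_i$ gives, by the same computation as in the proof of Corollary \ref{cor1}, the identity $\sum_{i,j} a_ia_j\otimes(v_i^*v_j - \delta_{ij}1_N) = 0$; grouping symmetrically and invoking linear independence of $\{a_ia_j : i\le j\}$ forces $v_i^*v_j + v_j^*v_i = 2\delta_{ij}1_N$ for all $i,j$, in particular $v_i^*v_i = 1_N$ so each $v_i$ is unitary (as $N$ is finite), and $v_i^*v_j + v_j^*v_i = 0$ for $i\neq j$. Since $N$ is abelian, $v_i^*v_j = v_j v_i^* $ and the relation becomes $v_i^*v_j + v_i v_j^* = 0$, i.e.\ $\text{Re}(v_i^*v_j) = 0$ pointwise (thinking of $N = L^\infty(\Omega,\mu)$ with $v_i(\omega)\in\mathbb{T}$): for each $\omega$ the unit complex numbers $v_1(\omega),\dots,v_d(\omega)$ are pairwise orthogonal in the real-inner-product sense, i.e.\ pairwise at angle $\pi/2$. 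But in the two-dimensional real vector space $\mathbb{C}$, one cannot have three pairwise-orthogonal nonzero vectors; since $d\ge 3$ this is impossible on any set of positive measure, so no such abelian $N$ exists and $T\notin\text{conv}(\text{Aut}(M_n(\mathbb{C})))$.

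The main obstacle I anticipate is getting part $(a)$ to work with a genuinely \emph{tracial} finite von Neumann algebra $N$ while keeping $u$ unitary — the Clifford/CAR algebra on $d$ anticommuting generators is the right object (finite-dimensional, with a unique tracial state satisfying $\tau_N(c_ic_j)=\delta_{ij}$), and the key verification is the cancellation $u^2 = \sum_i a_i^2\otimes 1 + \sum_{i<j}(a_ia_j\otimes c_ic_j + a_ja_i\otimes c_jc_i) = 1\otimes 1 + \sum_{i<j}a_ia_j\otimes(c_ic_j + c_jc_i) = 1_{M_n(N)}$, using both commutativity of the $a_i$ and anticommutativity of the $c_i$. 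One should double-check that $\tau_N(c_i^*c_j) = \tau_N(c_ic_j) = \delta_{ij}$ holds for the normalized trace on the Clifford algebra (it does, as the nontrivial Clifford words all have trace zero). For part $(b)$, the only subtlety is the passage from the operator identity to the pointwise angle condition; this is routine once one realizes $N$ abelian and finite forces each $v_i$ to be a unitary-valued function, after which the ``no three pairwise-orthogonal vectors in $\mathbb{R}^2$'' observation finishes it. Everything else is bookkeeping with Theorem \ref{th1}, Corollary \ref{cor1}, and Proposition \ref{prop2}.
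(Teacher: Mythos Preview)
Your proposal is correct and follows essentially the same route as the paper: for part $(a)$ you (after a false start with commuting signs) use anticommuting self-adjoint unitaries from a Clifford/CAR algebra, which is exactly what the paper does, and for part $(b)$ you reduce via Proposition~\ref{prop2} to an abelian $N$, deduce that the $v_i$ are pointwise unimodular with $\mathrm{Re}(\overline{v_i(t)}v_j(t))=0$ for $i\neq j$, and obtain a contradiction from $d\ge 3$. The only cosmetic difference is in the final step of $(b)$: the paper observes that $(\overline{v_1}v_2)(\overline{v_2}v_3)(\overline{v_3}v_1)=|v_1|^2|v_2|^2|v_3|^2=1$ is real while a product of three purely imaginary numbers is purely imaginary, whereas you phrase the same obstruction as ``no three pairwise-orthogonal nonzero vectors in $\mathbb{R}^2$''---these are equivalent.
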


\begin{proof}
The proof of $(a)$ is inspired by the proof of Theorem 3 in
\cite{Ric}\,. Let $N$ be the CAR-algebra over a
$d$-dimensional Hilbert space $H$ with orthonormal basis $e_1\,,
\ldots \,, e_d$\,, and let $a(e_i)$\,, $1\leq i\leq d$ be the
corresponding annihilation operators. Then $N\simeq
M_{2^d}(\mathbb{C})$ and the operators defined by
\[ v_i\colon =a(e_i)+a(e_i)^*\,, \quad 1\leq i\leq d \]
form a set of anti-commuting self-adjoint unitaries (see, e.g., \cite{BR}). Set now $u\colon
=\sum_{i=1}^d a_i\otimes v_i\in M_n(\mathbb{C})\otimes N$\,.
Then $u$ is unitary since
\begin{eqnarray*}
u^*u\,=\,\sum\limits_{i,
j=1}^d a_ia_j\otimes v_iv_j\,=\,\frac{1}{2}\sum\limits_{i, j=1}^d
(a_ia_j+a_ja_i)\otimes v_iv_j&=& \frac{1}{2}\sum\limits_{i, j=1}^d
a_ia_j\otimes (v_iv_j+v_jv_i)\\&=&\sum\limits_{i=1}^d a_i^2\otimes
1_N=1_{M_n(N)}\,. \end{eqnarray*}
Moreover,
$\tau_N(v_i^*v_j)=\tau_N((v_iv_j+v_jv_i)/2)=\delta_{ij}$\,,
$1\leq i, j\leq d$\,. Hence, by the implication $(iii)\Rightarrow
(i)$ of Theorem \ref{th1}, we deduce that $T$ is factorizable.

We now prove $(b)$\,. Assume that $d\geq 3$ and that $\{a_ia_j:
1\leq i\leq j\leq d\}$ is linearly independent. In particular, the
set $\{a_i: 1\leq i\leq d\}$ is linearly independent. If $T\in
\text{conv}(\text{Aut}(M_n(\mathbb{C}))$\,, then by Proposition
\ref{prop2}, there exists an abelian von Neumann algebra $N$ with
normal faithful tracial state $\tau_N$ and operators $v_1\,,
\ldots\,, v_d\in N$ such that the operator
$u\colon=\sum_{i=1}^d a_i\otimes v_i\in M_n(\mathbb{C})\otimes N=M_n(N)$ is unitary.
Therefore, $1_{M_n(N)}=u^*u=\sum_{i, j=1}^d a_i a_j\otimes v_i^* v_j$\,.
Using the fact that $a_ia_j=a_ja_i$\,, for all $1\leq i, j\leq d$\,, and that $\sum_{i=1}^d a_i^2=1_n$\,, we infer that
\[ \sum_{i=1}^d a_i^2 \otimes (v_i^*v_i-1_N)+\sum_{1\leq i< j\leq d}
a_ia_j\otimes (v_i^*v_j+v_j^*v_i)=0_{M_n(N)}\,. \] By the linear independence
of the set $\{a_ia_j: 1\leq i\leq j\leq d\}$\,, it follows that
$v_i^*v_i=1_N$\,, for $1\leq i\leq d$\,, and, respectively, that
$v_i^*v_j+v_j^*v_i=0_N$\,, for $1\leq i< j\leq d$\,. Since $N\simeq C(\widehat{N})$\,, we deduce that
\[ |v_i(t)|=1\,, \quad t\in \widehat{N}\,, \quad 1\leq i\leq d\,, \]
and, respectively,
\[ \text{Re}(\overline{v_i(t)}v_j(t))=0\,, \quad t\in \widehat{N}\,, \quad 1\leq i\neq j\leq d\,, \]
Since $d\geq 3$\,, it follows that
$\overline{v_1(t)}v_2(t)$\,, $\overline{v_2(t)}v_3(t)$ and
$\overline{v_3(t)}v_1(t)$ are purely imaginary complex numbers, for
all $t\in \widehat{N}$\,. Hence the product of these numbers is
also purely imaginary. On the other hand, this product equals
$|v_1(t)|^2|v_2(t)|^2|v_3(t)|^2=1$\,, which gives rise to a contradiction. We conclude that $T\notin
\text{conv}(\text{Aut}(M))$\,.
\end{proof}

We now discuss the case of Schur multipliers.
The following fact is probably well-known, but we include a proof
for completeness.

\begin{prop}\label{prop3}
Let $B=(b_{ij})_{i, j=1}^n \in M_n(\mathbb{C})$ and let $T_B\colon
M_n(\mathbb{C})\rightarrow M_n(\mathbb{C})$ be its corresponding
Schur multiplier, i.e.,
$T_B(x)=(b_{ij}x_{ij})_{i, j=1}^n$\,, for all $x=(x_{ij})_{i, j=1}^n\in
M_n(\mathbb{C})$\,.
The following conditions are equivalent:
\begin{enumerate}
\item $T_B$ is positive
\item $T_B$ is completely positive
\item There exist diagonal matrices $a_1\,, \ldots \,, a_d\in
M_n(\mathbb{C})$ such that
\begin{eqnarray}\label{eq9}
T_B (x)=\sum\limits_{i=1}^d a_i^*xa_i\,, \quad x\in
M_n(\mathbb{C})\,.
\end{eqnarray}
\item There exist linearly independent diagonal matrices $a_1\,, \ldots \,, a_d\in
M_n(\mathbb{C})$ such that (\ref{eq9}) holds. \item $B$ is a
positive semi-definite matrix, i.e., $B=B^*$ and all eigenvalues of
$B$ are non-negative.
\end{enumerate}
\end{prop}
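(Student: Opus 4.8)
The plan is to prove the five conditions equivalent by going around the cycle $(5)\Rightarrow(4)\Rightarrow(3)\Rightarrow(2)\Rightarrow(1)\Rightarrow(5)$. Several links require no work: $(4)\Rightarrow(3)$ is trivial, $(2)\Rightarrow(1)$ holds because complete positivity implies positivity, and $(3)\Rightarrow(2)$ is immediate from Choi's characterization already recalled in (\ref{eq1}) --- a map of the form $x\mapsto\sum_{i=1}^d a_i^*xa_i$ is automatically completely positive, regardless of whether the $a_i$ are diagonal. So all the content sits in the two implications $(1)\Rightarrow(5)$ and $(5)\Rightarrow(4)$.

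For $(1)\Rightarrow(5)$ the key remark is that $B$ is itself the image under $T_B$ of a positive matrix. Let $J\in M_n(\mathbb{C})$ be the all-ones matrix, $J_{ij}=1$; then $J=\mathbf 1\mathbf 1^*$ with $\mathbf 1=(1,\dots,1)^T$, so $J$ is positive semi-definite of rank one, and $T_B(J)_{ij}=b_{ij}\cdot 1=b_{ij}$, i.e. $T_B(J)=B$. Hence if $T_B$ is positive, $B=T_B(J)$ is a positive element of $M_n(\mathbb{C})$, which is precisely condition $(5)$ (self-adjoint with non-negative eigenvalues). This is a one-line argument and already shows that mere positivity of a Schur multiplier forces $B\geq 0$, hence forces complete positivity.

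For $(5)\Rightarrow(4)$ I would factor $B$. When $B\neq 0$, write $B=C^*C$ with $C\in M_{r,n}(\mathbb{C})$ of full row rank $r=\mathrm{rank}(B)$ --- for instance, from the spectral decomposition $B=\sum_{k=1}^r\lambda_k u_ku_k^*$ ($\lambda_k>0$, $u_k$ orthonormal) take $C$ with $k$-th row $\sqrt{\lambda_k}\,u_k^*$; the case $B=0$ is trivial (empty family). Set $a_k:=\mathrm{diag}(c_{k1},\dots,c_{kn})$, the diagonal matrix read off the $k$-th row of $C$. A direct entrywise computation gives $\bigl(\sum_{k=1}^r a_k^*xa_k\bigr)_{ij}=\sum_{k=1}^r\overline{c_{ki}}\,c_{kj}\,x_{ij}=(C^*C)_{ij}\,x_{ij}=b_{ij}x_{ij}$, so $T_B(x)=\sum_{k=1}^r a_k^*xa_k$ as in (\ref{eq9}). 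The $a_k$ are linearly independent because a relation $\sum_k\lambda_k a_k=0$ forces $\sum_k\lambda_k c_{kj}=0$ for every $j$, i.e. $\lambda$ lies in the (trivial) left null space of $C$; this gives $(4)$.

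There is no serious obstacle here --- the proposition is elementary once one notices that $T_B(\mathbf 1\mathbf 1^*)=B$ and that positive semi-definite matrices factor as $C^*C$ with $C$ of full row rank. As an alternative viewpoint for the implication $(5)\Rightarrow(3)$ (or for reading off $B\geq 0$ from any representation), one may note that if $a_i=\mathrm{diag}(a_i(1),\dots,a_i(n))$ then the vectors $\xi_j=(a_1(j),\dots,a_d(j))\in\mathbb{C}^d$ satisfy $b_{ij}=\langle\xi_j,\xi_i\rangle$, exhibiting $B$ as a Gram matrix; but the factorization route above is the most economical way to produce a \emph{linearly independent} family and close the cycle.
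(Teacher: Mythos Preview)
Your proof is correct and follows essentially the same route as the paper: the trivial chain $(4)\Rightarrow(3)\Rightarrow(2)\Rightarrow(1)$, the observation $B=T_B(J)$ with $J$ the all-ones matrix for $(1)\Rightarrow(5)$, and the spectral factorization $B=C^*C$ with $C$ of full row rank to produce the linearly independent diagonal $a_k$'s for $(5)\Rightarrow(4)$. The only cosmetic differences are that the paper writes the spectral decomposition as $B=C^*DC$ with $C$ unitary and then discards the zero-eigenvalue rows, and that you spell out the linear-independence argument (left null space of $C$) where the paper simply asserts it.
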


\begin{proof}
The series of implications $(4)\Rightarrow (3)\Rightarrow
(2)\Rightarrow (1)$ is trivial, so we only have to prove that
$(1)\Rightarrow (5)\Rightarrow (4)$\,. Assume that $T_B$ is
positive, i.e., $T_B((M_n(\mathbb{C}))_+)=(M_n(\mathbb{C}))_+$\,,
where $(M_n(\mathbb{C}))_+$ denotes the set of positive
semi-definite $n\times n$ complex matrices. Clearly, the matrix
$x_0$ whose entries are all equal to $1$ belongs to
$(M_n(\mathbb{C}))_+$\,, and therefore
$B=T_B(x_0)\in (M_n(\mathbb{C}))_+$\,. This shows that $(1)\Rightarrow (5)$\,.

To prove $(5)\Rightarrow (4)$\,, assume that $B=B^*$ with
non-negative eigenvalues. Then $B=C^*DC$\,, where $C$ is unitary and
$D=\text{diag}(\lambda_1\,, \ldots\,, \lambda_n)$ is a diagonal
matrix whose diagonal entries are the eigenvalues of $B$ repeated
according to multiplicity. In particular, $\lambda_i\geq 0$\,, for
$1\leq i\leq n$\,. Let $d\colon=\text{rank}(B)=\text{rank}(D)$\,. We
may assume that $\lambda_1\,, \ldots \,, \lambda_d> 0$ and
$\lambda_{d+1}=\ldots =\lambda_n=0$\,. For any $1\leq i\leq d$
consider now the diagonal $n\times n$  matrix given by
$a_i\colon =\sqrt{\lambda_i}\text{diag}(c_{i1}\,, \ldots c_{in})$\,,
where $(c_{i1}\,, \ldots \,, c_{in})$ is the $i$-th row of $C$\,.
Then $a_1\,, \ldots \,, a_d$ are linearly independent and one checks
easily that (\ref{eq9}) holds.
\end{proof}

Let $B\in M_n(\mathbb{C})$\,. By Proposition \ref{prop3}\,, the
Schur multiplier $T_B$ associated to the matrix $B$ is a $\tau_n$-Markov map if and only if $B$ is positive semi-definite and
$b_{11}=b_{22}=\ldots =b_{nn}=1$\,, because the latter
condition is equivalent to having
$T_B(1_n)=1_n$ and $\tau_n\circ T_B=\tau_n$\,.

\begin{rem}\label{rem100}\rm
In \cite{Ric} E. Ricard proved that if $B=(b_{ij})_{i, j=1}^n\in
M_n(\mathbb{R})$ is a positive semi-definite matrix whose diagonal entries are all equal to 1, then the associated Schur multiplier $T_B$ is always factorizable.
This result can also be obtained from Corollary \ref{cor2} $(a)$. Indeed, under the above hypotheses,
$B=C^t DC$\,, where $C$ is an orthogonal matrix and
$D=\text{diag}\{d_1\,, \ldots\,, d_n\}$ is a diagonal matrix with
$\lambda_i\geq 0$\,. Let $d:=\text{rank}(D)$\,. Then, following the
proof of the implication $(5)\Rightarrow (4)$ in Proposition
\ref{prop3}, we deduce that
$T_B(x)=\sum_{i=1}^d a_i^*xa_i$\,, for all $x\in
M_n(\mathbb{C})$\,, where $a_1\,, \ldots\,, a_d$ are linearly
independent diagonal matrices with $a_i=a_i^*$\,, $1\leq i\leq d$
(since the entries of $C$ are real numbers). Moreover,
$\sum_{i=1}^d
a_i^2=\sum_{i=1}^d a_i^*a_i=T_B(1_n)=1_n$\,, and
$a_ia_j=a_ja_i$\,, for $1\leq i, j\leq d$\,. It then follows from
Corollary \ref{cor2} $(a)$ that $T_B$ is factorizable.
\end{rem}

We end this section with a general characterization of factorizable Schur multipliers, which turns out to be useful for applications.

\begin{lemma}\label{lem222222}
Let $B=(b_{ij})_{i, j=1}^n$ be a positive semi-definite $n\times n$ complex matrix having all diagonal entries equal to 1. Then the associated
Schur multiplier $T_B$ is a factorizable $\tau_n$-Markov map if and only if there exists a
finite von Neumann algebra $N$ with normal faithful tracial state
$\tau_N$ and unitaries $u_1\,, \ldots \,, u_n\in N$ such that
\begin{eqnarray}\label{eq99909}
b_{ij}=\tau_N(u_i^*u_j)\,, \quad 1\leq i, j\leq n\,.
\end{eqnarray}
\end{lemma}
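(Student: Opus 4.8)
The plan is to derive both implications directly from Theorem \ref{th1}, the point being that a Schur multiplier has a very rigid form under the dilation (\ref{eq2}): since $T_B(e_{kl})=b_{kl}e_{kl}$ for the standard matrix units $\{e_{ij}\}$, any unitary $w\in M_n(\mathbb{C})\otimes N$ implementing $T_B$ via (\ref{eq2}) is forced to be \emph{block-diagonal}, i.e.\ of the form $w=\sum_{k=1}^n e_{kk}\otimes u_k$ with $u_k\in N$. First I should note that, since $B$ is positive semi-definite with $b_{11}=\dots=b_{nn}=1$, Proposition \ref{prop3} together with the remark following it already tells us that $T_B$ is a $\tau_n$-Markov map, so the only issue is factorizability and Theorem \ref{th1} is applicable.

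For the implication ``$\Leftarrow$'', suppose $N$, $\tau_N$ and unitaries $u_1,\dots,u_n\in N$ satisfying (\ref{eq99909}) are given. I would set $w:=\sum_{k=1}^n e_{kk}\otimes u_k$, which is a unitary in $M_n(\mathbb{C})\otimes N$, and check by a direct computation that for $x=(x_{ij})\in M_n(\mathbb{C})$ one has $(\text{id}_{M_n(\mathbb{C})}\otimes\tau_N)(w^*(x\otimes 1_N)w)=\sum_{i,j}\tau_N(u_i^*u_j)\,x_{ij}\,e_{ij}=\sum_{i,j}b_{ij}x_{ij}e_{ij}=T_B(x)$. Thus $T_B$ satisfies condition $(ii)$ of Theorem \ref{th1}, hence is factorizable.

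For the implication ``$\Rightarrow$'', assume $T_B$ is factorizable. By Theorem \ref{th1} (the implication $(i)\Rightarrow(ii)$) there is a finite von Neumann algebra $N$ with normal faithful tracial state $\tau_N$ and a unitary $w=(w_{ij})_{i,j=1}^n\in M_n(\mathbb{C})\otimes N$ with $T_B(x)=(\text{id}\otimes\tau_N)(w^*(x\otimes 1_N)w)$. Evaluating this identity at $x=e_{kl}$ and comparing the $(p,q)$-entries with $T_B(e_{kl})=b_{kl}e_{kl}$ yields $\tau_N(w_{kp}^*w_{lq})=b_{kl}\,\delta_{pk}\,\delta_{ql}$ for all indices. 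Taking $k=l$ and $p=q\neq k$ gives $\tau_N(w_{kp}^*w_{kp})=0$, so faithfulness of $\tau_N$ forces $w_{kp}=0$ whenever $p\neq k$; hence $w=\sum_{k}e_{kk}\otimes w_{kk}$ is block-diagonal, and unitarity of $w$ makes each $u_k:=w_{kk}$ a unitary in $N$. Finally, specializing the relations to $p=k$, $q=l$ gives $b_{kl}=\tau_N(u_k^*u_l)$, which is (\ref{eq99909}).

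I do not anticipate a genuine obstacle: the whole argument rests on the elementary observation ``Schur multiplier $\Rightarrow$ block-diagonal dilating unitary,'' after which the unitaries $u_k$ can simply be read off. The only steps demanding mild care are the index bookkeeping in passing from condition $(ii)$ of Theorem \ref{th1} to the entrywise identities $\tau_N(w_{kp}^*w_{lq})=b_{kl}\delta_{pk}\delta_{ql}$ (and, in the converse direction, the corresponding computation of $(\text{id}\otimes\tau_N)(w^*(x\otimes 1_N)w)$), together with the preliminary check that $T_B$ is a $\tau_n$-Markov map so that Theorem \ref{th1} may be invoked at all.
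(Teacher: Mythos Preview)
Your proposal is correct and follows essentially the same route as the paper's proof: both directions go through condition $(ii)$ of Theorem~\ref{th1}, the key step in ``$\Rightarrow$'' being that the dilating unitary must be block-diagonal, shown via faithfulness of $\tau_N$. The only cosmetic difference is that you compare the $(p,q)$-entries of $(\mathrm{id}\otimes\tau_N)(w^*(e_{kl}\otimes 1_N)w)$ directly with those of $T_B(e_{kl})$, whereas the paper reaches the same identities by pairing with $\tau_n(y\,\cdot\,)$ for suitable matrix units $y$; the underlying computation and logic are identical.
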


\begin{proof}
Assume that $T_B$ is factorizable. Then by Theorem \ref{th1}\,,
there exists a finite von Neumann algebra $N$ with
normal faithful tracial state $\tau_N$\,, and a unitary $u\in
M_n(N)=M_n(\mathbb{C})\otimes N$ such that
\begin{eqnarray}\label{eq4444454} T_B(x)=(\text{id}_{M_n(\mathbb{C})}\otimes \tau_N)(u^*(x\otimes
1_N)u)\, , \quad x\in M_n(\mathbb{C})\,. \end{eqnarray} It follows that
\begin{eqnarray}\label{eq44444542} \tau_n(yT_B(x))=(\tau_n\otimes
\tau_N)((y\otimes 1_N)u^*(x\otimes 1_N)u)\,, \quad x, y\in
M_n(\mathbb{C})\,.
\end{eqnarray}
Let $(e_{jk})_{1\leq j, k\leq n}$ be the standard matrix units in
$M_n(\mathbb{C})$\,. Then $u=\sum_{i,
k=1}^n e_{jk}\otimes u_{jk}$\,, where $u_{jk}\in N$\,, $1\leq j,
k\leq n$\,, and $u^*=\sum_{i, k=1}^n e_{kj}\otimes u_{jk}^*$\,.
Consider now $j\,, k\in \{1\,, \ldots\,, n\}$\,, $j\neq k$\,. By applying
(\ref{eq44444542}) to $x=e_{jj}$ and $y=e_{kk}$\,, we get
$\tau_n(e_{kk} T_B(e_{jj}))=b_{jj}\tau_n(e_{kk}e_{jj})=0$\,. Therefore,
\begin{eqnarray*} 0=(\tau_n\otimes \tau_N)((e_{kk}\otimes
1_N)u^*(e_{jj}\otimes 1_N)u)&=&(\tau_n\otimes \tau_N)((e_{kk}\otimes
1_N)u^*(e_{jj}\otimes 1_N)u (e_{kk}\otimes 1_N))\\&=&(\tau_n\otimes
\tau_N)(e_{kk}\otimes u_{jk}^*u_{jk})\\&=&(1/n)
\tau_N(u_{jk}^*u_{jk})\,.
\end{eqnarray*}
By the faithfulness of $\tau_N$\,, $u_{jk}=0_N$ for $j\neq k$\,. Thus $u=\sum_{j=1}^n e_{jj}\otimes u_{jj}$\,.
For $1\leq j, k\leq n$ we then get
\begin{eqnarray*}
b_{jk}=b_{jk}n\tau_n(e_{kj}e_{jk})=n\tau_n(e_{kj}T_B(e_{jk}))
&=& n(\tau_n\otimes \tau_N)((e_{kj}\otimes 1_N)u^*(e_{jk}\otimes
1_N)u)\\&=&n(\tau_n\otimes \tau_N)(e_{kk}\otimes u_{jj}^*u_{kk})\\&=&
\tau_N(u_{jj}^*u_{kk})\,.
\end{eqnarray*}
Hence (\ref{eq99909}) holds with $u_j=u_{jj}$\,, for $1\leq j\leq
n$\,.

Conversely, if (\ref{eq99909}) holds for a set of $n$
unitaries $u_1\,, \ldots \,, u_n$ in a finite von Neumann algebra
$N$ with normal, faithful, tracial state $\tau_N$\,, then the operator
$u\colon =\sum_{j=1}^n e_{jj}\otimes u_j$
is a unitary in $M_n(\mathbb{C})\otimes N$ and one checks easily
that (\ref{eq4444454}) holds. Hence, by Theorem \ref{th1}, $T_B$ is factorizable.
\end{proof}

\section{Examples}\setcounter{equation}{0}
We begin by exhibiting an example of a non-factorizable $\tau_3$-Markov maps on $M_3(\mathbb{C})$\,.

\begin{exam}\label{exp1}\rm
Set \[ a_1={\frac1{\sqrt{2}}}\left(
\begin{array}
[c]{ccc}%
0 & 0 & 0\\
0 & 0 & -1\\
0 & 1 & 0
\end{array}
\right)\,, \quad a_2={\frac1{\sqrt{2}}}\left(
\begin{array}
[c]{ccc}%
0 & 0 & 1\\
0 & 0 & 0\\
-1 & 0 & 0
\end{array}
\right)\,, \quad a_3={\frac1{\sqrt{2}}}\left(
\begin{array}
[c]{ccc}%
0 & -1 & 0\\
1 & 0 & 0\\
0 & 0 & 0
\end{array}
\right)\,. \] Then $\sum_{i=1}^3 a_i^*a_i=\sum_{i=1}^3
a_ia_i^*=1_3$, and hence the operator $T$ defined by
$Tx\colon= \sum\limits_{i=1}^3 a_i^* x a_i$\,, for all $x\in
M_n(\mathbb{C})$
is a $\tau_3$-Markov map. If $T$ were
factorizable, then by the implication $(i)\Rightarrow (iii)$ in
Theorem \ref{th1}\,, there would exist a finite von Neumann algebra $N$
with a normal faithful tracial state $\tau_N$ and elements $v_1\,,
v_2\,, v_3\in N$ such that the operator
\begin{eqnarray*}
u\colon = \sum\limits_{i=1}^3 a_i\otimes
v_i={\frac1{\sqrt{2}}}\left(
\begin{array}
[c]{ccc}%
0 & -v_3 & v_2\\
v_3 & 0 & -v_1\\
-v_2 & v_1 & 0
\end{array}
\right)
\end{eqnarray*}
is unitary, but as observed in \cite{HI} (pp. 282-283), this is
impossible. Indeed, since $u^*u=1_N$\,, we have
\begin{eqnarray}\label{eq7}
v_1^*v_1+v_2^*v_2=v_2^*v_2+v_3^*v_3=v_3^*v_3+v_1^*v_1=2\,{1_N}
\end{eqnarray}
and, respectively,
\begin{eqnarray}\label{eq8}
v_1^*v_2=v_2^*v_3=v_3^*v_1=0_N\,.
\end{eqnarray}
Note that (\ref{eq7}) implies that
$v_1^*v_1=v_2^*v_2=v_3^*v_3=1_N$\,, and since $N$ is finite,
it follows that $v_1$\,, $v_2$ and $v_3$ are unitary operators, which contradicts
(\ref{eq8})\,. This shows that $T$ is not factorizable.

Alternatively, one can check that $\{a_i^*a_j: 1\leq i, j\leq 3\}$
is a linearly independent set in $M_3(\mathbb{C})$ and then use
Corollary \ref{cor1} to prove that $T$ is not factorizable.
\end{exam}

We now present some concrete examples of non-factorizable Schur multipliers.

\begin{exam}\label{exp2}\rm
Following an example constructed in \cite{CV}, for $0\leq s\leq 1$ set
\begin{eqnarray*}
B(s)\colon=\left(
\begin{array}
[c]{cccc}%
1 & \sqrt{s} & \sqrt{s} & \sqrt{s}\\
\sqrt{s} & s & s & s\\
\sqrt{s} & s & s & s\\
\sqrt{s} & s & s & s
\end{array}
\right) + (1-s){\left(
\begin{array}
[c]{cccc}%
0 & 0 & 0 & 0\\
0 & 1 & \omega & \overline{\omega}\\
0 & \overline{\omega} & 1 & \omega\\
0 & \omega & \overline{\omega} & 1
\end{array}
\right)}\,,
\end{eqnarray*}
where $\omega\colon =e^{{{i2}{\pi}}/3}={-1}/2+i{{\sqrt{3}}/2}$ and
$\overline{\omega}$ is the
complex conjugate of $\omega$\,.
Note that $B(s)$ is positive semi-definite, since
\begin{eqnarray*}
B(s)=x_1(s)^*x_1(s)+x_2(s)^*x_2(s)\,,
\end{eqnarray*}
where $x_1(s)=(1\,, \sqrt{s}\,, \sqrt{s}\,, \sqrt{s})$ and
$x_2(s)={\sqrt{1-s}}(0\,, 1\,, \omega\,, \overline{\omega})$\,.
Moreover, $b_{11}=b_{22}=b_{33} =b_{44}=1$\,. Thus  $T_{B(s)}$
is a $\tau_4$-Markov map for all $0\leq s\leq
1$\,.

We claim that for $0<s<1$\,, the map $T_{B(s)}$ is not
factorizable. To prove this, we will use Corollary \ref{cor1}\,. Let $0< s< 1$ and
observe that
\begin{eqnarray*}
T_{B(s)}(x)=a_1(s)^*xa_1(s)+a_2(s)^*xa_2(s)\,, \quad x\in M_4(\mathbb{C})\,,
\end{eqnarray*}
where $a_1(s)$ and $a_2(s)$ are the diagonal $4\times 4$ matrices
\[ a_1(s)\colon =\text{diag}(1\,, \sqrt{s}\,, \sqrt{s}\,, \sqrt{s})\,,
\quad  a_2(s)\colon =\sqrt{1-s}\,\text{diag}(0\,, 1\,, \omega\,,
\overline{\omega})\,. \] It is elementary to check that the
following four matrices: $a_1(s)^*a_1(s)=\text{diag}(1\,, s\,, s\,,
s)$\,, $a_2(s)^*a_2(s)=(1-s)\,\text{diag}(0\,, 1\,, 1\,, 1)$\,,
$a_1(s)^*a_2(s)=\sqrt{s}\,\text{diag}(0\,, 1\,, \omega\,,
\overline{\omega})$ and $a_2(s)^*a_1(s)=\sqrt{s}\,\text{diag}(0\,, 1\,,
\overline{\omega}\,, \omega)$ are linearly independent. Hence, by the above-mentioned corollary,
$T_{B(s)}$ is not factorizable.

Note that for $s=1/3$\,, $B(s)$ has
a particularly simple form, namely, \begin{eqnarray*} B(1/3)=\left(
\begin{array}
[c]{cccc}%
1 & 1/{\sqrt{3}} & 1/{\sqrt{3}} & 1/{\sqrt{3}}\\
1/{\sqrt{3}} & 1 & {i}/{\sqrt{3}} & -{{i}/{\sqrt{3}}}\\
1/{\sqrt{3}} & -{{i}/{\sqrt{3}}} & 1 & {i}/{\sqrt{3}}\\
1/{\sqrt{3}} & {i}/{\sqrt{3}} & -{{i}/{\sqrt{3}}} & 1
\end{array}
\right)
\end{eqnarray*}
The above $4\times 4$ matrix examples can easily be generalized to examples of non-factorizable Schur multipliers on $(M_n(\mathbb{C}), \tau_n)$\,, for all $n\geq 4$\,, by setting
\begin{equation*}
B(s)\colon = x_1(s)^* x_1(s)+x_2(s)^* x_2(s)\,,
\end{equation*}
where $0< s< 1$ and $x_1\,, x_2$ are the column vectors in ${\mathbb{C}}^n$ given by
\[ x_1(s)\colon = (1\,, \sqrt{s}\,, \sqrt{s}\,, \ldots\,, \sqrt{s})\,, \quad x_2(s)\colon = {\sqrt{1-s}}(0\,, 1\,, \rho\,, \rho^2\,, \ldots\,, \rho^{n-2})\,, \]
where $\rho=e^{{i\,2 \pi}/(n-1)}$\,. Further, let $a_1(s)$ and $a_2(s)$ be the corresponding $n\times n$ diagonal matrices. Then the linear independence of the set $\{(a_j(s))^* a_k(s): 1\leq j, k\leq 2\}$ follows from the computation
\[ \text{det}\, \left(
\begin{array}
[c]{ccc}%
1 & 1 & 1\\
1 & \rho & \rho^2\\
1 & \bar{\rho} & {\bar{\rho}}^{\,2}\\
\end{array}
\right) = {\bar{\rho}}^{\,2}(\rho +1)(\rho -1)^3\neq 0\,, \]
where $\bar{\rho}$ is the conjugate of $\rho$\,. Then, an application of Corollary \ref{cor1} shows that the Schur multiplier $T_{B(s)}$ is not factorizable.

\end{exam}

\begin{exam}\label{exp3}\rm
Let $\beta=1/{\sqrt{5}}$ and set \[ B\colon= \left(
\begin{array}
[c]{cccccc}%
1 & \beta & \beta & \beta & \beta & \beta\\
\beta & 1 & \beta & -\beta & -\beta & -\beta\\
\beta & \beta & 1 & \beta & -\beta & -\beta\\
\beta & -\beta & \beta & 1 & \beta & -\beta\\
\beta & -\beta & -\beta & \beta & 1 & \beta\\
\beta & \beta & -\beta & -\beta & \beta & 1\\
\end{array}
\right)\,. \] We claim that $T_B$ is a
factorizable $\tau_6$-Markov map on $M_6(\mathbb{C})$\,, but
$T_B\notin \text{conv}(\text{Aut}(M_6(\mathbb{C})))$\,. To prove
this, observe first that since $\cos ({2\pi}/{5})=\cos
({8\pi}/{5})=(-1+\sqrt{5})/4$ and $\cos ({4\pi}/{5})=\cos
({6\pi}/{5})=(-1-\sqrt{5})/4$\,, then $B$ can be written in the form
\[ B=x^*_1x_1+x^*_2x_2+x^*_3x_3\,, \] where $x_1\colon =(1, 1/{\sqrt{5}},
1/{\sqrt{5}}\,, 1/{\sqrt{5}}\,, 1/{\sqrt{5}}\,, 1/{\sqrt{5}})$\,,
$x_2\colon =\sqrt{2/5}(0, 1, e^{i2\pi/5}\,, e^{i4\pi/5}\,, e^{i6\pi/5}\,,
e^{i8\pi/5})$ and $x_3\colon =\overline{x_2}=\sqrt{2/5}(0, 1,
e^{-i2\pi/5}\,, e^{-i4\pi/5}\,, e^{-i6\pi/5}\,, e^{-i8\pi/5})$\,.
Hence $B$ is positive semi-definite. By Remark \ref{rem100}\,, $T_B$ is
a factorizable $\tau_6$-Markov map on $M_6(\mathbb{C})$\,. Moreover,
\[ T_{B}(x)=\sum\limits_{i=1}^3 b_i^*xb_i\,, \quad x\in
M_6(\mathbb{C})\,, \] where $b_1\colon =\text{diag} \,(1, 1/{\sqrt{5}},
1/{\sqrt{5}}\,, 1/{\sqrt{5}}\,, 1/{\sqrt{5}}\,, 1/{\sqrt{5}})$\,,
$b_2\colon =\sqrt{2/5}\,\text{diag} \,(0, 1, e^{i2\pi/5}\,, e^{i4\pi/5}\,,
e^{i6\pi/5}\,, e^{i8\pi/5})$ and $b_3\colon ={b_2}^*$\,.
Set now \[ a_1\colon =b_1\,, \quad a_2\colon =\frac1{\sqrt{2}}(b_2+b_3)\,,
\quad a_3\colon =\frac1{i\sqrt{2}}(b_2-b_3)\,. \] Then
$T_{B}(x)=\sum_{i=1}^3 a_i x a_i$\,, for all $x\in
M_6(\mathbb{C})$\,. Note that $a_1\,, a_2\,, a_3$ are commuting
self-adjoint (diagonal) matrices with $\sum_{i=1}^3 a_i^2=1_6$\,. Thus, if we knew that
the set
$\{a_ia_j: 1\leq i\leq j\leq 3\}$
is linearly independent, then, by Corollary \ref{cor2} $(b)$ we could conclude that
$T\notin \text{conv}(\text{Aut}(M_n(\mathbb{C})))$\,. Note that the linear
independence of the above set is equivalent to the
linear independence of the set
\begin{eqnarray}\label {eq567658}
\{b_ib_j: 1\leq i\leq j\leq 3\}\,.
\end{eqnarray}
Set $\gamma\colon =e^{{i2\pi}/{5}}$\,. Then the following relations hold:
\begin{eqnarray}\label{eq5121} \,\,\,\,&&b_1^2= \frac{1}{5}\, \text{diag}\,\left(5, 1, 1, 1, 1,
1\right)\,, b_1b_2=\frac{\sqrt{2}}{5} \text{diag}\,(0, 1, \gamma,
\gamma^2, \gamma^3, \gamma^4)\,, b_1b_3=\frac{\sqrt{2}}{5}\,
\text{diag}\,(0, 1, \gamma^4, \gamma^8, \gamma^{12},
\gamma^{16})\\
\,\,\,\,&&b_2^2=\frac{2}{5}\, \text{diag}\,\left(0, 1,
\gamma^2, \gamma^4, \gamma^6, \gamma^8\right)\,,
b_2b_3=\frac{2}{5}\, \text{diag}\,(0, 1, 1,
1, 1, 1)\,, b_3^2=\frac{2}{5} \,\text{diag}\,\left(0, 1, \gamma^3,
\gamma^6, \gamma^9, \gamma^{12}\right)\,.\nonumber \end{eqnarray}
Now let \[ H\colon =\left(\exp\left(i \frac{2\pi
kl}{5}\right)\right)_{0\leq k, l\leq 4}=\left(
\begin{array}
[c]{ccccc}%
1 & 1 & 1 & 1 & 1\\
1 & \gamma & \gamma^2 & \gamma^3 & \gamma^4\\
1 & \gamma^2 & \gamma^4 & \gamma^6 & \gamma^8\\
1 & \gamma^3 & \gamma^6 & \gamma^9 & \gamma^{12}\\
1 & \gamma^4 & \gamma^8 & \gamma^{12} & \gamma^{16}
\end{array}
\right)\,. \] Then $H$ is a complex $5\times 5$ Hadamard matrix, i.e.,
$|h_{i,j}|^2=1$\,, for all $0\leq i, j\leq 4$ and $H^*H=HH^*=5 \,1_5$\,. It follows that the
rows of $H$ are linearly independent. This fact, combined with the relations (\ref{eq5121}), shows that the set in
(\ref{eq567658}) is linearly independent. Hence the assertion is
proved.
\end{exam}

\begin{theorem}\label{th990}
Let $L=(L_{jk})_{1\leq j, k\leq 4}$ be the $4\times 4$ complex matrix
given by
\begin{eqnarray}\label{eq221110}
L\colon =\left(
\begin{array}
[c]{cccc}%
0 & 1/2 & 1/2 & 1/2\\
1/2 & 0 & 1-\omega & 1-\overline{\omega}\\
1/2 & 1-\overline{\omega} & 0 & 1-\omega\\
1/2 & 1-\omega &
1-\overline{\omega} & 0
\end{array}
\right)
\end{eqnarray}
where $\omega\colon =e^{{i 2 \pi}/3}$ and
$\overline{\omega}$ is the complex conjugate
of $\omega$\,. Let $(C(t))_{t\geq 0}$ denote the one-parameter
family of $4\times 4$ complex matrices \begin{eqnarray}\label{eq776765}
C(t)\colon =(e^{-tL_{jk}})_{1\leq j, k\leq 4}\,, \quad t\geq 0\,.
\end{eqnarray}
Then the corresponding Schur multipliers
\begin{eqnarray}\label{eq7767658}
T_t\colon =T_{C(t)}\,, \quad t\geq 0
\end{eqnarray}
form a continuous one-parameter semigroup of $\tau_4$-Markov maps on $M_4(\mathbb{C})$ starting at
$T(0)=\text{id}_{M_4(\mathbb{C})}$\,. Moreover, there exists $t_0>
0$ such that $T(t)$ is not factorizable for any $0< t< t_0$\,.
\end{theorem}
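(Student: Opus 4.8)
The plan is to establish the three assertions in turn. First, since $L$ has zero diagonal, every $C(t)$ has all diagonal entries equal to $1$; and for $s,t\ge 0$ the entrywise product of $C(s)$ and $C(t)$ is $(e^{-sL_{jk}}e^{-tL_{jk}})_{j,k}=C(s+t)$. Since the entrywise product of matrices corresponds to composition of the associated Schur multipliers, this gives $T_s\circ T_t=T_{s+t}$; continuity of $t\mapsto T_t$ in the operator norm on $\mathcal B(M_4(\mathbb C))$ follows from continuity of $t\mapsto C(t)$ together with finite-dimensionality, and $C(0)$ is the all-ones matrix, so $T_0=\mathrm{id}$. Moreover $L=L^*$ (here one uses $\overline{1-\omega}=1-\overline\omega$), so by Proposition \ref{prop3} and the remark following it, to know that each $T_t$ is a $\tau_4$-Markov map it is enough to check that $C(t)$ is positive semi-definite for every $t\ge 0$. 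This follows from Schoenberg's theorem once one verifies, by a direct three-dimensional computation, that $L$ is conditionally negative definite, i.e. $\sum_{j,k}\overline{\xi_j}\,\xi_k L_{jk}\le 0$ whenever $\xi_1+\xi_2+\xi_3+\xi_4=0$; equivalently, that the compression of $L$ to $\{(1,1,1,1)\}^{\perp}$ is negative semi-definite.

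For the non-factorizability statement, fix $t>0$ and suppose $T_t$ is factorizable. By Lemma \ref{lem222222} there exist a finite von Neumann algebra $N$ with a faithful normal tracial state $\tau$ and unitaries $u_1,\dots,u_4\in N$ with $\tau(u_j^*u_k)=C(t)_{jk}$; replacing $u_j$ by $u_1^*u_j$, we may assume $u_1=1$. Set $a:=e^{-t/2}$ and $\beta:=e^{-t(1-\omega)}$. Then $\tau(u_j)=a$ for $j=2,3,4$, and reading off the lower-right $3\times3$ block of $C(t)$, $\tau(u_2^*u_3)=\tau(u_3^*u_4)=\tau(u_4^*u_2)=\beta$. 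Put $x_j:=u_j-1$ for $j=2,3,4$; then $\|x_j\|_2^2=2-2a$, and the Gram matrix $G=(\tau(x_j^*x_k))_{j,k=2}^{4}$ is the circulant matrix with first row $(2-2a,\ \beta+1-2a,\ \overline\beta+1-2a)$. Its eigenvalues, on the Fourier eigenvectors $(1,1,1)$, $(1,\omega,\overline\omega)$ and $(1,\overline\omega,\omega)$, are $4-6a+2\,\mathrm{Re}\,\beta$, $1+2\,\mathrm{Re}(\omega\beta)$ and $1+2\,\mathrm{Re}(\omega^2\beta)$; a short computation shows that the first two vanish to first order at $t=0$. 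Hence
\[ \|x_2+x_3+x_4\|_2=O(t)\qquad\text{and}\qquad \|x_2+\omega x_3+\overline\omega x_4\|_2=O(t), \]
with constants independent of $N$ and of the $u_j$. Solving this pair of linear relations for $x_2$ and $x_3$ in terms of $x_4$ (the relevant $2\times2$ coefficient matrix being invertible) yields $x_2=\omega x_4+O(t)$ and $x_3=\overline\omega x_4+O(t)$ in $\|\cdot\|_2$.

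Now one exploits unitarity: $u_j^*u_j=1$ is equivalent to $x_j+x_j^*=-x_j^*x_j$. Writing $x_2=\omega x_4+r$ with $\|r\|_2=O(t)$, and using $\|x_j\|_\infty=\|u_j-1\|_\infty\le 2$ to bound the cross terms, one gets $x_2^*x_2=x_4^*x_4+O(t)$ in $\|\cdot\|_2$, and similarly $x_3^*x_3=x_4^*x_4+O(t)$. Substituting these, together with $-x_4^*x_4=x_4+x_4^*$, into the unitarity relations for $u_2$, $u_3$ and $u_4$ gives, in $\|\cdot\|_2$,
\[ (\omega-1)\,x_4+(\overline\omega-1)\,x_4^*=O(t)\qquad\text{and}\qquad (\overline\omega-1)\,x_4+(\omega-1)\,x_4^*=O(t). \]
The coefficient matrix $\bigl(\begin{smallmatrix}\omega-1&\overline\omega-1\\ \overline\omega-1&\omega-1\end{smallmatrix}\bigr)$ has determinant $(\omega-\overline\omega)(\omega+\overline\omega-2)=-3\sqrt3\,i\ne 0$, so $\|x_4\|_2=O(t)$ with an absolute constant. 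On the other hand $\|x_4\|_2^2=2-2e^{-t/2}$, which is of order $t$ as $t\to 0$, so $\|x_4\|_2$ is of order $\sqrt t$. These statements are incompatible once $t$ is small enough, and this contradiction produces $t_0>0$ such that $T_t$ is not factorizable for any $0<t<t_0$.

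The one genuinely delicate point is quantitative rather than structural: the estimates $\|x_2+x_3+x_4\|_2=O(t)$ and $\|x_2+\omega x_3+\overline\omega x_4\|_2=O(t)$, and the propagation of the resulting $O(t)$ errors through the quadratic unitarity relations $x_j+x_j^*=-x_j^*x_j$, must be made uniform in $N$ and in the $u_j$. This is achieved purely from the explicit entries of $C(t)$, the fact that $G$ is circulant (so that its eigenvalues and eigenvectors are given by the three-point Fourier transform), and the elementary bound $\|u_j-1\|_\infty\le 2$. The only other computation required is the verification that $L$ is conditionally negative definite, which is routine finite-dimensional linear algebra.
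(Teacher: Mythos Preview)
Your proof is correct and follows the same overall strategy as the paper: positivity of $C(t)$ via Schoenberg, the characterization of factorizable Schur multipliers by unitaries $u_1,\dots,u_4$ in a tracial von Neumann algebra (Lemma~\ref{lem222222}), and a Fourier/circulant analysis of $u_2,u_3,u_4$ to produce an asymptotic contradiction of the form $\sqrt{t}\le O(t)$.

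The one genuine difference is in how the contradiction is extracted from unitarity. The paper forms $x=(u_2+u_3+u_4)/3$, $y=(u_2+\overline\omega u_3+\omega u_4)/3$, $z=(u_2+\omega u_3+\overline\omega u_4)/3$, $w=x-u_1$ and uses the \emph{exact} algebraic identity $x^*y+y^*z+z^*x=0$ (a consequence of $u_j^*u_j=1$ for $j=2,3,4$) to obtain in one stroke the inequality $\|y\|_2\le 2\|z\|_2+\|w\|_2$; then $\|y\|_2^2=t+O(t^2)$ while $\|z\|_2^2,\|w\|_2^2=O(t^2)$ finishes it. Your route is a two--stage approximation: from the two $O(t^2)$ circulant eigenvalues you get $x_2=\omega x_4+O(t)$, $x_3=\overline\omega x_4+O(t)$ in $\|\cdot\|_2$, and then feed these into the unitarity relations $x_j+x_j^*=-x_j^*x_j$ to produce an invertible $2\times2$ scalar system in $(x_4,x_4^*)$, forcing $\|x_4\|_2=O(t)$ against $\|x_4\|_2^2=2-2e^{-t/2}\sim t$. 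Your argument is a bit more hands--on but has the advantage of being entirely systematic (circulant diagonalization plus linear algebra), whereas the paper's single identity is slicker but has to be spotted. Note that your error propagation through the quadratic term $r^*r$ needs the uniform bound $\|r\|_\infty\le\|u_2\|+\|u_4\|+|1-\omega|$, which you should state explicitly since you only claimed $\|r\|_2=O(t)$. The paper also obtains conditional negative definiteness of $L$ not by a direct computation but by observing that $L=-\frac{d}{ds}B(s)\big|_{s=1}$ for the positive semidefinite family $B(s)$ of Example~\ref{exp2}; your direct three--dimensional check is of course equally valid.
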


\begin{proof}
Let $((B(s))_{1\leq s\leq 1}$ be the positive semi-definite $4\times
4$ complex matrices considered in Example \ref{exp2}. In particular, the
matrix $B(1)$ has all entries equal to 1\,.
Note that the matrix $L$ given by (\ref{eq221110}) is the first
derivative of $B(s)$ at $s=1$\,, i.e.,
\[ L={\frac{dB(s)}{ds}}_{\vert_{s=1}}=\lim\limits_{s\nearrow 1}
\frac {B(s)-B(1)}{s-1}\,. \] Since $B(s)$ is positive semi-definite
for all $0\leq s\leq 1$\,, we have that
\[ \sum\limits_{j, k=1}^4 (B(s)-B(1))c_j\bar{c_k}\geq 0\,, \]
whenever $c_1\,, \ldots\,, c_4\in \mathbb{C}$ and $c_1+\,, \ldots
+c_4=0$\,. This implies that $\sum_{j, k=1}^4 L_{jk} c_j\bar{c_k}\geq 0$\,,
i.e., $L$ is a conditionally negative definite matrix. By
Schoenberg's theorem (see, e.g., \cite{BCR}), the matrices
$C(t)$\,, $t\geq 0$ given by (\ref{eq776765}) are all positive
semi-definite. Moreover, since $L_{11}=L_{22}=L_{33}=L_{44}=0$\,, we
also have $C(t)_{11}=C(t)_{22}=C(t)_{33}=C(t)_{44}=0$\,, for all
$t\geq 0$\,. Hence the Schur multipliers $T(t)=T_{C(t)}$\,, $t\geq
0$ are all  $\tau_4$-Markov maps. Clearly, the family
$(T(t))_{t\geq 0}$ forms a continuous one-parameter semigroup of
Schur multipliers starting at $T(0)=\text{id}_{M_4(\mathbb{C})}$\,.
Now set
\[ F\colon =\{t> 0 : T_{C(t)} \,\,\text{is factorizable}\}\,. \]
We will show that there exists $t_0> 0$ such that $F\cap (0,
t_0)=\emptyset\,.$

By Lemma \ref{lem222222}, for each $t\in F$ we can find a finite von
Neumann algebra $N(t)$ with normal faithful tracial state
$\tau_{N(t)}$ and four unitary operators $u_1(t)\,, \ldots\,,
u_4(t)\in N(t)$ such that
\[ C(t)_{jk}=\tau_{N(t)}(u_j(t)^*u_k(t))\,, \quad 1\leq j, k\leq 4\,. \]
Since $\omega+\overline{\omega}+1=0$ and
$\omega^2=\overline{\omega}$\,, we can express $u_1(t)\,, \ldots \,,
u_4(t)$ in the form $u_1(t)=x(t)+w(t)$\,,
$u_2(t)=x(t)+y(t)+z(t)$\,, $u_3(t)=x(t)+\omega
y(t)+\overline{\omega} z(t)$\,, $u_4(t)=x(t)+\overline{\omega}
y(t)+\omega z(t)$\,, where $x(t)\colon =(u_2(t)+u_3(t)+u_4(t))/3$\,,
$y(t)\colon =(u_2(t)+\overline{\omega}u_3(t)+\omega u_4(t))/3$\,,
$z(t)\colon =(u_2(t)+\omega u_3(t)+\overline{\omega} u_4(t))/3$\,,
$w(t)=x(t)-u_1(t))$\,.
Note that for all $t\in F$\,,
\begin{eqnarray}\label{eq2929}
\|x(t)\|\leq 1\,, \quad \|y(t)\|\leq 1\,, \quad \|z(t)\|\leq 1\,.
\end{eqnarray}
We prove next that
\begin{eqnarray}\label{eq29296}
\|y(t)\|_2\leq 2\|z(t)\|_2+\|w(t)\|_2\,.
\end{eqnarray}
For this, observe first that $x(t)^*y(t)+y(t)^*z(t)+z(t)^*x(t)= 0$\,, which implies that
%\begin{eqnarray}\label{eq29297}
%since the left hand side is equal to
%$(1/3)(u_2(t)^*u_2(t)+\overline{\omega} u_3(t)^*u_3(t)+\omega
%u_4(t)^*u_4(t)=(1/3)(1+\overline{\omega}+\omega)=0$\,. Thus
\begin{eqnarray}\label{eq29298}
\|x(t)^*y(t)\|_2&\leq &\|y(t)^*z(t)\|_2+\|z(t)^*x(t)\|_2\\
&\leq & \|y(t)\|\|z(t)\|_2+\|z(t)\|_2\|x(t)\|\nonumber\\&\leq &
\,\,2\|z(t)\|_2\,,\nonumber
\end{eqnarray}
wherein we used (\ref{eq2929}).
Now recall that $x(t)=u_1(t)+w(t)$\,, where $u_1(t)$ is a unitary. Thus, by
(\ref{eq29298}), \begin{eqnarray*}
\|y(t)\|_2\,\,=\,\,\|u_1(t)^*y(t)\|_2&=&\|x(t)^*y(t)-w(t)^*y(t)\|_2\\&\leq
&
\|x(t)^*y(t)\|_2+\|w(t)\|_2\|y(t)\|_2\nonumber\\
&\leq &2\|z(t)\|_2+\|w(t)\|_2\,,\nonumber
\end{eqnarray*}
which proves (\ref{eq29296}).
Next, observe that the $2$-norms of $y(t)$\,, $z(t)$ and
$w(t)$ can be expressed in terms of the entries of the matrix $L$, because for $c:=(c_1\,, c_2\,, c_3\,, c_4)\in
\mathbb{C}^4$ and $t\in F$ we have \[ \left\|\sum\limits_{j=1}^4
c_ju_j(t)\right\|_2^2=\sum\limits_{j, k=1}^4 \bar{c_j} c_k
\tau_N(u_j(t)^*u_k(t))=\sum\limits_{j, k=1}^4 \bar{c_j} c_k
e^{-tL_{jk}}=f(c, t)\,, \] where the function $t\mapsto f(c, t)$ is actually defined for
all $t\geq 0$ and satisfies
\begin{eqnarray}\label{eq2929801}
f(c, t)=|c_1+c_2+c_3+c_4|^2-\left(\sum\limits_{j, k=1}^4 \bar{c_j}
c_k L_{jk}\right)t+O(t^2)\,, \quad \text{as}\,\, t\searrow
0\,,
\end{eqnarray}
in Landau's O-notation. Consider now the three special cases where
$c\colon =(c_1\,, c_2\,, c_3\,, c_4)\in \mathbb{C}^4$ is equal to
$(0, 1/3, \overline{\omega}/3, \omega/3)$\,, $(0, 1/3, \omega/3,
\overline{\omega}/3)$ and $(-1, 1/3, 1/3, 1/3)$\,, respectively. For $t\geq 0$\,, let us denote the function $f(c, t)$ in (\ref{eq2929801}) by $g(t)$\,, $h(t)$ and $k(t)$\,, respectively, in each of the corresponding case. Then,
\begin{eqnarray}\label{eq292980}
g(t)=\|y(t)\|_2^2\,, \quad h(t)=\|z(t)\|_2^2\,,
\quad k(t)=\|z(t)\|_2^2\,, \qquad t\in F\,.
\end{eqnarray}
Moreover,
by (\ref{eq2929801})\,,
\begin{eqnarray}\label{eq2929808}
g(t)=t+O(t^2)\,, \quad h(t)=O(t^2)\,, \quad k(t)=O(t^2)\,, \qquad
\text{as} \,\, t\searrow 0\,.
\end{eqnarray}
Assume now that $\inf(F)=0$\,. Then there exists a sequence
$(t_n)_{n\geq 1}$ in $F$ such that $t_n\rightarrow 0$ as $n\rightarrow \infty$\,.
By (\ref{eq29296}) and (\ref{eq292980}) we have
\[ g(t_n)^{1/2}\leq 2h(t_n)^{1/2}+k(t_n)^{1/2}\,, \quad n\geq 1\,. \]
However, by (\ref{eq2929808})\,, $2h(t_n)^{1/2}+k(t_n)^{1/2}=O(t_n)$\,, while
$g(t_n)^{1/2}=t_n^{1/2}+O(t_n^{3/2})$\,, both for large enough $n$\,. This gives rise to a contradiction. Hence $\inf(F)> 0$\,, i.e.,
there exists $t_0> 0$ such that $(0, t_0)\cap F=\emptyset$\,. The
proof is complete.
\end{proof}

\begin{rem}\label{rem6666777990}\rm
The above theorem is to be contrasted with a result of K\"ummerer and Maassen (cf. \cite{KM}), showing that if
$(T(t))_{t\geq 0}$
is a one-parameter semigroup of $\tau_n$-Markov maps on $M_n(\mathbb{C})$ satisfying $T(t)^*=T(t)$\,, for all $t\geq 0$\,, then
$T(t)\in \text{conv}(\text{Aut}(M_n(\mathbb{C})))$\,, for all $t\geq 0$\,.
In particular, $T(t)$ is factorizable, for all $t\geq 0$\,.

In very recent work, Junge, Ricard and Shlyakhtenko \cite{JRS} have generalized K\"ummerer and Maassen's result to the case of a strongly continuous one-parameter semigroup $(T(t))_{t\geq 0}$ of self-adjoint Markov maps on an arbitrary von Neumann algebra with a faithful, normal tracial state by proving that also in this case $T(t)$ is factorizable, for all $t\geq 0$\,. This result has been independently obtained by Y. Dabrowski (see \cite{Dab}).
\end{rem}

\begin{rem}\label{rem6666777}\rm
In the recent preprint \cite{DyJ}, K. Dykema and K. Juschenko have indirectly exhibited an example of a $\tau_4$-Markov map on $M_4(\mathbb{C})$ which is not factorizable.

More precisely, for every $n\geq 1$ they considered the sets
${\mathcal F}_n$\,, defined as the closure of the union over $k\geq 1$ of sets of $n\times n$ complex matrices $(b_{ij})_{1\leq i, j\leq n}$ such that  $b_{ij}=\tau_k(u_iu_j^*)$\,, where $u_1, \ldots, u_n\in {\mathcal U}(k)$\,, respectively,
${\mathcal G}_n$\,, consisting of all $n\times n$ complex matrices $(b_{ij})_{1\leq i, j\leq n}$ such that $b_{ij}=\tau_M(u_iu_j^*)$\,, where $u_1, \ldots, u_n$ are unitaries in some von Neumann algebra $M$ equipped with normal faithful tracial state $\tau_M$ (where $M$ varies).
By a refinement of Kirchberg's deep results from \cite{Ki}, they concluded that Connes' embedding problem whether every $II_1$-factor with separable predual embeds in the ultrapower of the hyperfinite $II_1$ factor has an affirmative answer, if and only if ${\mathcal F}_n={\mathcal G}_n$\,,  for all $n\geq 1$\,. Further, they pointed out that ${\mathcal F}_n\subseteq {\mathcal G}_n\subseteq \Theta_n$\,, for all $n\geq 1$\,, where $\Theta_n$ is the set of $n\times n$ (complex) correlation matrices, i.e., positive semi-definite matrices whose entries on the main diagonal are all equal to 1.
A natural question to consider is whether ${\mathcal F}_n=\Theta_n$\,, for all $n\geq 1$. One of the results of \cite{DyJ} is that the answer to this question is negative, as soon as $n\geq 4$\,. More precisely, Dykema and Juschenko showed that ${\mathcal G}_4$ has no extreme points of rank 2, while there are extreme points of rank 2 in $\Theta_4$\,. Hence ${\mathcal G}_4\neq \Theta_4$\,.
In view of Lemma \ref{lem222222} above, any element of $\Theta_4\setminus {\mathcal G}_4$ is an example of a non-factorizable $\tau_4$-Markov map on $M_4(\mathbb{C})$\,.
\end{rem}

\section{Kummerer's notions of dilation and their connection to factorizability}

The following definitions are due to K\"ummerer (see \cite{Ku3}, Definitions 2.1.1 and 2.2.4, respectively):

\begin{defi}\label{def456666}
Let $M$ be a von Neumann algebra with a normal faithful state $\phi$ and let $T\colon M\rightarrow M$ be a $\phi$-Markov map. A dilation of $T$ is a quadruple $(N, \psi, \alpha, \iota)$\,, where $N$ is a von Neumann algebra with a normal faithful state $\psi$\,, $\alpha\in \text{Aut}(N, \psi)$\,, i.e., $\alpha$ is an automorphism of $N$ leaving $\psi$ invariant and $\iota\colon M\rightarrow N$ is a $(\phi, \psi)$-Markov $*$-monomorphism, satisfying
\begin{equation}\label{eq1111111111111111112}
T^n=\iota^*\circ \alpha^n\circ \iota\,, \quad n\geq 1\,.
\end{equation}
Furthermore, we say that that $(N, \psi, \alpha, \iota)$ is a dilation of $T$ of order 1 if (\ref{eq1111111111111111112}) holds for $n=1$\, but not necessarily for $n\geq 2$\,.
\end{defi}

\begin{defi}\label{def45666669}
A dilation $(N, \psi, \alpha, \iota)$ of a $\phi$-Markov map $T\colon M\rightarrow M$ is called a Markov dilation if
\begin{equation}\label{eq11111111111111111123}
\mathbb{P}_{\{0\}}(x)=\mathbb{P}_{(-\infty, 0]}(x)\,, \quad x\in (\bigcup_{k\geq 0} \alpha^k\circ {\iota}(M))^{\prime\prime}\,,
\end{equation}
where for $I\subseteq \mathbb{Z}$\,, $\mathbb{P}_I$ denotes (cf. Lemma 2.1.3 of \cite{Ku3}) the unique $\psi$-preserving normal faithful conditional expectation of $N$ onto its subalgebra $\left(\bigcup_{k\in I} \alpha^k\circ {\iota}(M)\right)^{\prime\prime}$\,.
\end{defi}

\begin{rem}\label{rem899999999}\rm
The condition (\ref{eq11111111111111111123}) is equivalent to
\begin{equation*}
\mathbb{P}_{\{0\}} \mathbb{P}_{[0, \infty)}=\mathbb{P}_{(-\infty, 0]} \mathbb{P}_{[0, \infty)}\,.
\end{equation*}
Clearly, $\mathbb{P}_{[0, \infty)} \mathbb{P}_{\{0\}}=\mathbb{P}_{\{0\}}$\,, and since both $\mathbb{P}_{\{0\}}$ and $\mathbb{P}_{[0, \infty)}$ extend uniquely to self-adjoint projections on $L^2(N\,, \psi)$\,, it also follows that $\mathbb{P}_{\{0\}} \mathbb{P}_{[0, \infty)}=\mathbb{P}_{\{0\}}$\,. Hence (\ref{eq11111111111111111123}) is further equivalent to
\begin{equation}\label{eq33333333333333356}
\mathbb{P}_{\{0\}}=\mathbb{P}_{(-\infty, 0]} \mathbb{P}_{[0, \infty)}\,.
\end{equation}
\end{rem}

In the Spring of 2008, C. Koestler informed us in a private correspondence that he was aware of the fact that for a given Markov map, the existence of a dilation (in the sense of Definition \ref{def456666}) is actually equivalent to C. Anantharaman-Delaroche's condition of factorizability of the map. The proof relies on a construction of inductive limit of von Neumann algebras naturally associated to a Markov $*$-monomorphism, studied by K\"ummerer in his unpublished Habilitationsschrift \cite{Ku1}. We are very grateful to C. Koestler for sharing all this information with us and for kindly providing us with a copy of \cite{Ku1}.

We were further able to show that the existence of a dilation for a given Markov map is equivalent to the existence of a Markov dilation (in the sense of Definition \ref{def45666669}) for it.
For completeness, we collect together all these equivalent statements in the following theorem.

\begin{theorem}\label{th77777777232323}
Let $M$ be a von Neumann algebra with normal faithful state $\phi$ and let $T\colon M\rightarrow M$ be a $\phi$-Markov map. The following statements are equivalent:
\begin{enumerate}
\item [$(1)$] $T$ is factorizable.
\item [$(2)$] $T$ has a dilation.
\item [$(3)$] $T$ has a Markov dilation.
\end{enumerate}
\end{theorem}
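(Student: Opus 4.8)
The plan is to prove the cycle of implications $(3)\Rightarrow(2)\Rightarrow(1)\Rightarrow(3)$, which is the natural route since $(3)\Rightarrow(2)$ is immediate from the definitions: a Markov dilation is in particular a dilation. For $(2)\Rightarrow(1)$, suppose $(N,\psi,\alpha,\iota)$ is a dilation of $T$. Set $P\colon=N$, $\chi\colon=\psi$, $J_1\colon=\iota\colon M\to P$, and $J_0\colon=\alpha\circ\iota\colon M\to P$. Both $J_0$ and $J_1$ are $*$-monomorphisms; $J_1$ is $(\phi,\psi)$-Markov by hypothesis, and $J_0$ is $(\phi,\psi)$-Markov because $\alpha\in\mathrm{Aut}(N,\psi)$ composes a $(\phi,\psi)$-Markov map with a $(\psi,\psi)$-Markov automorphism. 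Then using the characterization of the adjoint in \eqref{eq5000567} together with the fact that $\alpha^*=\alpha^{-1}$ (since $\alpha$ preserves $\psi$), one computes $J_0^*\circ J_1 = (\alpha\circ\iota)^*\circ\iota = \iota^*\circ\alpha^{-1}\circ\iota$. Hmm — this gives $\iota^*\alpha^{-1}\iota$ rather than $\iota^*\alpha\iota = T$; the fix is to instead take $J_0\colon=\iota$ and $J_1\colon=\alpha\circ\iota$, so that $J_0^*\circ J_1=\iota^*\circ\alpha\circ\iota=T^1=T$ by \eqref{eq1111111111111111112} with $n=1$. Hence $T$ is factorizable. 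Note this direction only uses the dilation of order $1$.

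The substantial work is $(1)\Rightarrow(3)$: from factorizability one must build a genuine Markov dilation, recovering all the powers $T^n$ simultaneously and verifying the Markov property \eqref{eq11111111111111111123}. The approach is the inductive-limit construction attributed to K\"ummerer \cite{Ku1}. Starting from a factorization $T=J_0^*\circ J_1$ through $(P,\chi)$ with $*$-monomorphisms $J_0,J_1\colon M\to P$, one first manufactures a single Markov $*$-monomorphism from which $T$ is recovered by an automorphism flip: form the "two-sided" object, e.g. by amalgamating copies of $P$ over $M$ along the maps $J_0,J_1$, or more concretely build a von Neumann algebra $Q$ containing $M$ with a $\chi$-type state and an injective $*$-homomorphism $j\colon M\to Q$ together with a $\psi$-preserving conditional expectation onto $j(M)$ realizing $T$. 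Then one takes the infinite tensor-product-like inductive limit $N\colon=\varinjlim_k (\text{$k$-fold amalgamated product})$ indexed by $\mathbb{Z}$, carrying the infinite product state $\psi$, equipped with the shift automorphism $\alpha$ and the embedding $\iota\colon M\to N$ into the $0$-th coordinate. The shift structure forces $\iota^*\circ\alpha^n\circ\iota=T^n$ for every $n\geq 1$ by a telescoping computation: conditioning from coordinate $n$ back to coordinate $0$ passes through each intermediate coordinate and picks up one factor of $T$ each time, using the tower property of conditional expectations (compatibility of the $\mathbb{P}_I$). This uses Remark \ref{rem78787777754} to identify the relevant adjoints with conditional expectations and the tracial/modular compatibility built into Markov maps to guarantee these conditional expectations exist (\cite{Ta1}).

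For the Markov property one must check \eqref{eq11111111111111111123}, equivalently, via Remark \ref{rem899999999}, that $\mathbb{P}_{\{0\}}=\mathbb{P}_{(-\infty,0]}\,\mathbb{P}_{[0,\infty)}$. In the inductive-limit model the algebra $\left(\bigcup_{k\in I}\alpha^k\iota(M)\right)^{\prime\prime}$ is literally the sub-product over the coordinates in $I$, and $\mathbb{P}_I$ is the obvious coordinate-projection conditional expectation coming from the product state; the identity $\mathbb{P}_{(-\infty,0]}\mathbb{P}_{[0,\infty)}=\mathbb{P}_{\{0\}}$ is then the statement that conditioning onto "coordinates $\leq 0$" followed by — wait, order reversed — conditioning onto "coordinates $\geq 0$" and then onto "coordinates $\leq 0$" lands in the intersection $\{0\}$, which holds because in a product state the past and future coordinates are independent over the present. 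One proves this by passing to the $L^2$ level, where the $\mathbb{P}_I$ become orthogonal projections onto the corresponding $L^2$-subspaces, and the subspace for $(-\infty,0]$ is orthogonal to "strictly future" relative to the subspace for $[0,\infty)$, so the composition is projection onto the common part, which is $L^2$ of the $0$-coordinate algebra. The main obstacle, and where the real care is required, is the construction of the amalgamated building block and the verification that the infinite inductive limit carries a well-defined normal faithful state with the stated conditional-expectation structure — i.e. making the "infinite tensor product amalgamated over $M$" rigorous as a von Neumann algebra with normal faithful state, handling normality of the maps and existence of all the $\mathbb{P}_I$ via Takesaki's theorem; once that scaffolding is in place, the two defining identities \eqref{eq1111111111111111112} and \eqref{eq11111111111111111123} follow by the telescoping and independence arguments sketched above.
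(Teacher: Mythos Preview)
Your easy implications are fine (once you swap $J_0$ and $J_1$ as you caught), and your overall architecture for $(1)\Rightarrow(3)$---build a one-sided endomorphic structure from the factorization, then pass to an inductive limit to turn the endomorphism into an automorphism---matches the paper's. But the substantive step is handled very differently, and your version has a real gap.

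The paper does \emph{not} construct the amalgamated object from scratch. It invokes Theorem~6.6 of \cite{AD} as a black box: given a factorizable $T$, that theorem produces $(N,\psi)$, a $\psi$-Markov $*$-endomorphism $\beta\colon N\to N$, and a Markov embedding $j\colon M\to N$ with $T^n=j^*\circ\beta^n\circ j$ for all $n$, \emph{together with} the commutation relation $\mathbb{E}_{[0,n+k]}\circ\beta^k=\beta^k\circ\mathbb{E}_{[0,n]}$ (condition~(6.2) there). Only then is Proposition~\ref{prop80000000000} applied to dilate $\beta$ to an automorphism $\alpha$ on $\tilde N$, and the Markov property is checked by translating that commutation relation to the $L^2$ level: it yields $P_{H_{n+k}}P_{V^k(H)}=P_{V^k(H_n)}$, hence $\mathbb{E}_{[0,k]}\mathbb{E}_{[k,\infty)}=\mathbb{E}_{\{k\}}$, and conjugating by $\alpha^{-k}$ and letting $k\to\infty$ gives $\mathbb{P}_{(-\infty,0]}\mathbb{P}_{[0,\infty)}=\mathbb{P}_{\{0\}}$.

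Your argument for the Markov property, by contrast, appeals to ``past and future independent over the present in a product state.'' But the object you are building is an amalgamated product over $M$, not a tensor product; the copies $\alpha^k\iota(M)$ all meet nontrivially, and there is no product state. Conditional independence of past and future over the present is exactly the nontrivial content one must \emph{prove}, and it does not follow from the shift picture alone---it requires precisely the compatibility $\mathbb{E}_{[0,n+k]}\circ\beta^k=\beta^k\circ\mathbb{E}_{[0,n]}$ that Anantharaman-Delaroche's construction guarantees and that your sketch never establishes. Likewise, your telescoping argument for $\iota^*\alpha^n\iota=T^n$ presupposes that intermediate conditional expectations factor step by step, which again is a consequence of that same relation, not of the bare inductive-limit setup. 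So the missing ingredient is concrete: either cite \cite{AD}, Theorem~6.6, for the endomorphism $\beta$ with its condition~(6.2), or reproduce that construction and verify (6.2) directly; without it the Markov property does not follow.
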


For convenience, we include the details of the above-mentioned inductive limit construction for von Neumann algebras, that will be used in the proof of Theorem \ref{th77777777232323}.

\begin{lemma}\label{lem7000000000000}
Suppose that for each positive integer $k$\,, we are given a von Neumann algebra $M_k$ with a normal faithful state $\psi_k$ and a unital $*$-monomorphism $\beta_k \colon M_k \to M_{k+1}$ such that $\psi_{k+1} \circ \beta_k = \psi_k$\,, satisfying, moreover,
\begin{equation}\label{eq4000000000000004}
\sigma_t^{\psi_{k+1}} \circ \beta_k=\beta_k\circ \sigma_t^{\psi_k}\,, \quad t\in \mathbb{R}\,.
\end{equation}
Then, there exists a von Neumann algebra $M$ with a normal faithful state $\psi$ and unital $*$-monomorphisms $\mu_k \colon M_k \to M$ such that $\mu_{k+1} \circ \beta_k = \mu_k$\,, $\psi \circ \mu_k = \psi_k$ and $\sigma_t^\psi \circ \mu_k=\mu_k\circ \sigma_t^{\psi_k}$\,, $t\in \mathbb{R}$\,, for all $k\geq 1$, and such that $\bigcup_{k=1}^\infty \mu_k(M_k)$ is weakly dense in $M$.

Moreover, if we are given another von Neumann algebra $N$ with a normal faithful state $\varphi$ and (normal) $*$-monomorphisms $\lambda_k \colon M_k \to N$ such that $\lambda_{k+1} \circ \beta_k = \lambda_k$ and $\varphi \circ \lambda_k = \psi_k$ for all $k\geq 1$, then there exists a unique $*$-monomorphism $\lambda \colon M \to N$ such that $\lambda \circ \mu_k = \lambda_k$ for all $k\geq 1$ and $\varphi \circ \lambda = \psi$.
\end{lemma}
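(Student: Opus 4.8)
The plan is to realize $M$ as a von Neumann algebra acting on the inductive limit of the GNS Hilbert spaces of the pairs $(M_k,\psi_k)$, which is the standard construction (and the one used by K\"ummerer). Write $(H_k,\pi_k,\xi_k)$ for the GNS triple of $(M_k,\psi_k)$; note $\xi_k$ is cyclic and separating, since $\psi_k$ is faithful. Each $\beta_k$ is automatically normal (as $\psi_{k+1}$ is faithful normal and $\psi_{k+1}\circ\beta_k=\psi_k$ is normal), so it induces an isometry $V_k\colon H_k\to H_{k+1}$ determined by $V_k\pi_k(x)\xi_k=\pi_{k+1}(\beta_k(x))\xi_{k+1}$, satisfying $V_k\xi_k=\xi_{k+1}$ and $V_k\pi_k(x)=\pi_{k+1}(\beta_k(x))V_k$. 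The point of hypothesis (\ref{eq4000000000000004}) is that it forces $V_k$ to intertwine the modular data as well: if $\Delta_k,J_k$ denote the modular operator and conjugation of the pair $(\pi_k(M_k),\xi_k)$, then $\Delta_{k+1}^{it}V_k=V_k\Delta_k^{it}$ for $t\in\mathbb R$, and a polar-decomposition argument (comparing the closures of $\pi_k(x)\xi_k\mapsto\pi_k(x^*)\xi_k$) then gives $J_{k+1}V_k=V_kJ_k$ as well. I would then set $H:=\varinjlim(H_k,V_k)$, with canonical isometries $W_k\colon H_k\to H$ satisfying $W_{k+1}V_k=W_k$ and $\bigcup_kW_kH_k$ dense in $H$, the distinguished unit vector $\xi:=W_k\xi_k$ (independent of $k$), and the operators $\Delta^{it}$, $J$ on $H$ characterized by $\Delta^{it}W_k=W_k\Delta_k^{it}$ and $JW_k=W_kJ_k$.

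The monomorphisms $\mu_k$ come next. Writing $\beta_{k,n}:=\beta_{n-1}\circ\cdots\circ\beta_k$ for $n\ge k$ (and $\beta_{k,k}=\mathrm{id}$), a direct computation shows that for a fixed $x\in M_k$ the operators $W_n\pi_n(\beta_{k,n}(x))W_n^{\,*}\in\mathcal B(H)$, $n\ge k$, all agree on $W_mH_m$ whenever $n\ge m\ge k$; being uniformly bounded by $\|x\|$ and eventually constant on the dense union $\bigcup_mW_mH_m$, they converge strongly to an operator $\mu_k(x)$ with $\mu_k(x)\xi=W_k\pi_k(x)\xi_k$, which acts on each $W_mH_m$ (under the identification with $H_m$ via $W_m$) as $\pi_m(\beta_{k,m}(x))$. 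Using that strong limits of uniformly bounded nets respect products, and passing to weak limits to handle adjoints, $\mu_k\colon M_k\to\mathcal B(H)$ is a unital $*$-homomorphism; it is isometric, hence injective, since $\|\pi_k(x)\|\le\|\mu_k(x)\|\le\|x\|=\|\pi_k(x)\|$ ($\psi_k$ being faithful), and it is normal because testing an increasing bounded positive net against vectors of the dense union reduces to normality of the $\pi_m$ and of $\beta_{k,m}$. One reads off $\mu_{k+1}\circ\beta_k=\mu_k$. I would then put $M:=\big(\bigcup_k\mu_k(M_k)\big)''$ and $\psi:=\omega_\xi|_M$, where $\omega_\xi(a)=\langle a\xi,\xi\rangle$; this $\psi$ is normal, $\psi\circ\mu_k=\psi_k$, and since $\overline{\mu_k(M_k)\xi}=W_kH_k$, the vector $\xi$ is cyclic for $M$.

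The crux of the argument is the faithfulness of $\psi$, and this is where (\ref{eq4000000000000004}) enters a second time. I would rerun the entire construction with the commutant GNS representations $\rho_k(y):=J_k\pi_k(y^*)J_k$ in place of the $\pi_k$ --- which is legitimate precisely because $V_k$ intertwines the $J_k$ --- obtaining a unital $*$-homomorphism $\rho$ of $\bigcup_kM_k$ into $\mathcal B(H)$ with $\rho(x)\xi=W_k\rho_k(x)\xi_k$ that acts on each $W_mH_m$ as $\rho_m(\beta_{k,m}(x))$. Since $\rho_m(M_m)\subseteq\pi_m(M_m)'$ on every $H_m$, this gives $\rho\big(\bigcup_kM_k\big)\subseteq M'$, while $\overline{\rho(\bigcup_kM_k)\xi}=\overline{\bigcup_kW_kH_k}=H$ shows that $\xi$ is cyclic for $M'$, i.e.\ separating for $M$. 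Hence $\psi$ is faithful and Tomita--Takesaki theory applies to $(M,\xi)$; comparing the closure of $a\xi\mapsto a^*\xi$ on the core $\bigcup_k\mu_k(M_k)\xi$ with $J\Delta^{1/2}$ identifies the modular operator and conjugation of $(M,\psi)$ with the $\Delta$ and $J$ constructed above. Consequently $\sigma_t^\psi(\mu_k(x))=\Delta^{it}\mu_k(x)\Delta^{-it}$ acts on each $W_mH_m$ as $\pi_m(\sigma_t^{\psi_m}(\beta_{k,m}(x)))=\pi_m(\beta_{k,m}(\sigma_t^{\psi_k}(x)))$, again by (\ref{eq4000000000000004}), which is exactly the action of $\mu_k(\sigma_t^{\psi_k}(x))$; thus $\sigma_t^\psi\circ\mu_k=\mu_k\circ\sigma_t^{\psi_k}$.

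For the universal property, given $(N,\varphi)$ and the $\lambda_k$ as in the statement, the isometries $\widetilde V_k\colon H_k\to L^2(N,\varphi)$ induced by the state-preserving maps $\lambda_k$ satisfy $\widetilde V_{k+1}V_k=\widetilde V_k$, hence assemble, by the universal property of $H=\varinjlim H_k$, into an isometry $\widetilde V\colon H\to L^2(N,\varphi)$ with $\widetilde V W_k=\widetilde V_k$ and $\widetilde V\xi=\xi_N$. One checks $\widetilde V\mu_k(x)=\pi_N(\lambda_k(x))\widetilde V$ for $x\in M_k$, so that $\theta(a):=\widetilde V^{\,*}\pi_N(a)\widetilde V$ defines a normal unital $*$-homomorphism on $N_0:=\big(\bigcup_k\lambda_k(M_k)\big)''$ --- multiplicative because $\widetilde V(H)$ is invariant under $\pi_N(N_0)$, and injective because $\xi_N$ is separating for $\pi_N(N)$ --- with $\theta(\lambda_k(x))=\mu_k(x)$. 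By normality of $\theta$ together with weak density, $\theta$ maps $N_0$ onto $M$, so that $\lambda\colon M\to N$, defined as $\theta^{-1}$ followed by the inclusion $N_0\subseteq N$, is a $*$-monomorphism satisfying $\lambda\circ\mu_k=\lambda_k$ and $\varphi\circ\lambda=\psi$ (both normal functionals, agreeing on the weakly dense subalgebra $\bigcup_k\mu_k(M_k)$), and its uniqueness is immediate. The steps I expect to demand the most care are the bookkeeping of the second paragraph (verifying that the stationary strong limits genuinely assemble into a normal $*$-homomorphism) and, above all, the faithfulness argument of the third paragraph, which is the one place where the modular covariance hypothesis (\ref{eq4000000000000004}) is truly indispensable.
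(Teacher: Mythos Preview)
Your argument is correct and follows essentially the same route as the paper: both build $M$ via GNS on the inductive-limit Hilbert space, both use the modular-covariance hypothesis to assemble a global conjugation $J$ (your ``commutant representation'' $\rho(x)=J\mu_k(x^*)J$ is exactly the paper's $J\pi_\psi(M)J\subseteq\pi_\psi(M)'$) in order to show $\xi$ is separating and hence $\psi$ faithful, and both handle the universal property by passing to the GNS picture. The only organizational differences are that the paper first forms the $C^*$-inductive limit $M_\infty$ and then takes its GNS closure (which packages your second paragraph into one line), identifies $\sigma_t^\psi$ via the KMS condition rather than by directly comparing modular operators, and for the universal property reduces to the case where $\bigcup_k\lambda_k(M_k)$ is dense in $N$ so that your isometry $\widetilde V$ becomes a unitary and $\lambda(x)=\widetilde V x\widetilde V^{\,*}$ can be written down directly, avoiding the construction and inversion of $\theta$.
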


\begin{proof}
As a first step towards the existence of $M$, let $M_\infty$ be the $C^*$-algebra inductive limit of the sequence $M_1 \to M_2 \to M_3 \to \cdots$\,. This is a $C^*$-algebra equipped with $^*$-monomorphisms $\widetilde{\mu}_k \colon M_k \to M_\infty$ (which are unital when the connecting mappings $\beta_k$ all are unital) satisfying $\widetilde{\mu}_{k+1} \circ \beta_k = \widetilde{\mu}_k$ for all $k\geq 1$\,, and which contains $\bigcup_{k=1}^\infty \widetilde{\mu}_k(M_k)$ as a norm-dense sub-algebra. The states $\widetilde{\psi}_k$ on $\widetilde{\mu}_k(M_k)$, defined by $\widetilde{\psi}_k \circ \widetilde{\mu}_k = \psi_k$, are coherent, i.e., the restriction of $\widetilde{\psi}_{k+1}$ to $\widetilde{\mu}_k(M_k)$ is equal to $\widetilde{\psi}_k$, and so they extend to a state $\widetilde{\psi}$ on $M_\infty$. Let $M$ be the weak closure of $M_\infty$ in the GNS-representation of $M_\infty$ with respect to the state $\widetilde{\psi}$, and let $\mu_k \colon M_k \to M$ be the composition of $\widetilde{\mu}_k$ with the inclusion mapping of $M_\infty$ into $M$. The state $\widetilde{\psi}$ extends to a normal state $\psi$ on $M$. It is clear that \begin{equation*} \mu_{k+1} \circ \beta_k = \mu_k\,, \quad \psi \circ \mu_k = \psi_k\,, \qquad k\geq 1\,.\end{equation*}
We prove next that $\psi$ is faithful. For simplicity, we will now identify $M_k$ with $\mu_k(M_k)$\,, $k\geq 1$\,. Then we have the inclusions
$M_1\subseteq M_2\subseteq M_3\subseteq \ldots$ and $\bigcup_{k=1}^\infty M_k$ is weakly dense in $M$\,. Moreover, these inclusions extend to isometric embeddings of the corresponding GNS Hilbert spaces for $(M_k\,, \psi_k)$\,, $k\geq 1$\,, i.e.,
$L^2(M_1\,, \psi_1)\subseteq L^2(M_2\,, \psi_2)\subseteq L^2(M_3\,, \psi_3)\subseteq \ldots$
and $\bigcup_{k=1}^\infty L^2(M_k\,, \psi_k)$ is dense in $L^2(M\,, \psi)$\,. After these identifications, for every $k\geq 1$\,, the condition $\sigma_t^{\psi_{k+1}}\circ \beta =\beta\circ \sigma_t^{\sigma_k}$\,, $t\in \mathbb{R}$\,,
is equivalent to
\begin{equation}\label{eq33300000} \sigma_t^{\psi_{k+1}}(x)=\sigma_t^{\psi_k}(x)\,, \quad x\in M_k\,, \quad t\in \mathbb{R}\,.
\end{equation}
Hence, by \cite{Ta1} there exist unique normal faithful conditional expectations $\mathbb{E}_k\colon M_{k+1}\rightarrow M_k$ such that $\psi_k\circ \mathbb{E}_k=\psi_{k+1}$\,, $k\geq 1$\,. Also from \cite{Ta1} it follows that the isometric modular conjugation operator $J_{\psi_k}$ on $L^2(M_k\,, \psi_k)$ is the restriction of $J_{\psi_{k+1}}$ to $L^2(M_k\,, \psi_k)$\,. Hence, there exists a conjugate-linear isometric involution $J$ on $L^2(M, \psi)$ which extends al the $J_k$'s\,. Since, moreover, $J_{\psi_k} \pi_{\psi_k}(M_k) J_{\psi_k}={\pi_{\psi_k} (M_k)}^\prime$\,, for all $k\geq 1$\,,
it follows that
\begin{equation}\label{eq2000786}
J \pi_\psi(M) J\subseteq {{\pi_\psi}(M)}^\prime\,.
\end{equation}
Let $\xi_{\psi}$ be the cyclic vector in $L^2(M, \psi)$ corresponding to the unit operator $1$ in $M$\,. Then $J \xi_\psi=\xi_\psi$, and therefore by (\ref{eq2000786}), $\xi_\psi$ is also cyclic for ${{\pi_\psi}(M)}^\prime$. Hence $\xi_\psi$ is separating for ${\pi_\psi}(M)$\,, which proves that $\psi$ is faithful.
Finally, in order to prove that $\sigma_t^\psi \circ \mu_k=\mu_k\circ \sigma_t^{\psi_k}$\,, $t\in \mathbb{R}$\,, $k\geq 1$\,, we have to show that under the above identifications,
\begin{equation}\label{eq224422000}
\sigma_t^\psi(x)=\sigma_t^{\psi_k}(x)\,, \quad x\in M_k\,, \quad t\in \mathbb{R}\,, \quad k\geq 1\,.
\end{equation}
By (\ref{eq33300000}), together with the fact that $\psi_k={\psi_{k+1}}_{\vert_{M_k}}$\,, $k\geq 1$\,, it follows that the modular automorphism groups $(\sigma_t^{\psi_k})_{t\in \mathbb{R}}$\,, $k\geq 1$ have a unique extension to a strongly continuous one-parameter group of automorphisms $(\sigma_t)_{t\in \mathbb{R}}$ on $M$\,. Moreover, $\psi$ satisfies the KMS condition with respect to $(\sigma_t)_{t\in \mathbb{R}}$\,, since each $\psi_k$ is a $(\sigma_t^{\psi_k})_{t\in \mathbb{R}}$-KMS state on $M_k$ (see Theorem 1.2, Chap. VIII, in \cite{Tk}). Therefore $\sigma_t=\sigma_t^\psi$\,, $t\in \mathbb{R}$\,, which proves (\ref{eq224422000}).

To prove the second part of the lemma we can without loss of generality assume that $\bigcup_{k=1}^\infty \lambda_k(M_k)$ is weakly dense in $N$ (otherwise replace $N$ by the weak closure of $\bigcup_{k=1}^\infty \lambda_k(M_k)$). Consider the GNS-representations of $M$ and $N$ on Hilbert spaces $H$ and $H'$ with respect to the normal faithful states $\psi$ and $\varphi$, respectively. Then there exist cyclic and separating vectors $\xi \in H$ and $\xi' \in H'$ for $M$ and $N$, respectively, such that \begin{equation*}\psi(x) = \langle x \xi , \xi \rangle\,, \quad \varphi(y) = \langle y \xi' , \xi' \rangle\,, \qquad  x \in M\,, y \in N\,.\end{equation*}
By the universal property of the $C^*$-algebra inductive limit, there is a $*$-monomorphism $\widetilde{\lambda} \colon M_\infty \to N$ satisfying $\widetilde{\lambda} \circ \mu_k = \lambda_k$ for all $k\geq 1$. Observe that $\varphi \circ \widetilde{\lambda}(x) = \psi(x)$\,, for all $x \in M_\infty$. It follows that the map $u_0 \colon M_\infty \xi \to H'$ defined by $u_0 x\xi = \widetilde{\lambda}(x) \xi'$\,, $x \in M_\infty$\,, is isometric and has dense range in $H'$. Hence it extends to a unitary $u \colon H \to H'$. We see that $uxu^* \xi' = \widetilde{\lambda}(x) \xi'$, and hence that $uxu^* = \widetilde{\lambda}(x)$,  for all $x \in M_\infty$. The map $\lambda \colon M \to N$ defined by $\lambda(x) = uxu^*$\,, for $x \in M$, has the desired properties.
\end{proof}

\begin{rem}\label{re6700005}\rm
We would like to draw the reader's attention upon the subtle fact that condition (\ref{eq4000000000000004}) is crucial for guaranteeing that the canonical state $\widetilde{\psi}$ on the $C^*$-algebra inductive limit $M_\infty$ extends to a {\em faithful} state $\psi$ on the von Neumann algebra $M$, obtained via the GNS representation of $M_\infty$ with respect to $\widetilde{\psi}$\,.
\end{rem}

\noindent The von Neumann algebra $(M,\psi)$ is said to be the von Neumann algebra inductive limit of the sequence
$$\xymatrix{(M_1,\psi_1) \ar[r]^-{\beta_1} & (M_2,\psi_2) \ar[r]^-{\beta_2} & (M_3,\psi_3) \ar[r]^-{\beta_3} & \cdots}.
$$

The following result is a reformulation of Proposition 2.1.7 in \cite{Ku1}\,.

\begin{prop}\label{prop80000000000}
Let $N$ be a von Neumann algebra with a normal faithful state $\psi$\,, and let $\beta\colon N\rightarrow N$ be a $\psi$-Markov $*$-monomorphism. Then there exists a von Neumann algebra $\widetilde{N}$ with a normal faithful state $\widetilde{\psi}$\,, a $(\psi, \widetilde{\psi})$-Markov embedding $\iota\colon N\rightarrow \widetilde{N}$ and an $\alpha\in \text{Aut}(\tilde{N})$ for which $\widetilde{\psi}\circ \alpha=\widetilde{\psi}$ such that $\beta=\iota^*\circ \alpha \circ \iota$\,.
\end{prop}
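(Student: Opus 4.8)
The plan is to take $\widetilde N$ to be the von Neumann algebra inductive limit of the constant tower whose vertices are all $(N,\psi)$ and whose connecting maps are all $\beta$, and then to take for $\alpha$ the inverse of the natural shift on this limit. First I would apply Lemma \ref{lem7000000000000} with $M_k=N$, $\psi_k=\psi$ and $\beta_k=\beta$ for every $k\geq 1$. The hypotheses are met: $\beta$ is a unital $*$-monomorphism with $\psi\circ\beta=\psi$ (as $\beta$ is $\psi$-Markov), and condition (\ref{eq4000000000000004}) is precisely $\sigma_t^{\psi}\circ\beta=\beta\circ\sigma_t^{\psi}$, which is property $(4)$ of Definition \ref{defmarkov} for $\beta$; this is the one place where condition $(4)$ is genuinely used, and it is exactly what guarantees that the limit state is faithful (cf. Remark \ref{re6700005}). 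We thereby obtain a von Neumann algebra $\widetilde N$ with a normal faithful state $\widetilde\psi$ and unital $*$-monomorphisms $\mu_k\colon N\to\widetilde N$ satisfying $\mu_{k+1}\circ\beta=\mu_k$, $\widetilde\psi\circ\mu_k=\psi$, $\sigma_t^{\widetilde\psi}\circ\mu_k=\mu_k\circ\sigma_t^{\psi}$, with $\bigcup_k\mu_k(N)$ weakly dense in $\widetilde N$. I would set $\iota:=\mu_1$; being a unital normal $*$-monomorphism that intertwines the states and the modular groups, $\iota$ is automatically a $(\psi,\widetilde\psi)$-Markov embedding.

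Next I would build $\alpha$. The crucial structural point is that $\mu_k=\mu_{k+1}\circ\beta$ forces $\mu_k(N)=\mu_{k+1}(\beta(N))\subseteq\mu_{k+1}(N)$, so the subalgebras $\mu_1(N)\subseteq\mu_2(N)\subseteq\cdots$ form an increasing chain with weakly dense union. Applying the universal property in the second half of Lemma \ref{lem7000000000000} (with target $(\widetilde N,\widetilde\psi)$) to the coherent families $\lambda_k:=\mu_{k+1}$, and respectively $\lambda_1:=\mu_1\circ\beta$, $\lambda_k:=\mu_{k-1}$ for $k\geq 2$, produces unique normal $*$-monomorphisms $S,\alpha\colon\widetilde N\to\widetilde N$ preserving $\widetilde\psi$, determined by $S\circ\mu_k=\mu_{k+1}$ and by $\alpha\circ\mu_1=\mu_1\circ\beta$, $\alpha\circ\mu_k=\mu_{k-1}$ $(k\geq 2)$. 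Comparing both sides on the dense set $\bigcup_k\mu_k(N)$ gives $\alpha\circ S=S\circ\alpha=\text{id}_{\widetilde N}$ (for $S\circ\alpha$ the one non-automatic case is on $\mu_1(N)$, where $S\circ\alpha\circ\mu_1=S\circ\mu_1\circ\beta=\mu_2\circ\beta=\mu_1$), so $\alpha\in\text{Aut}(\widetilde N)$ with $\widetilde\psi\circ\alpha=\widetilde\psi$. (Alternatively, the image of $S$ is a weakly closed subalgebra containing $\bigcup_{k\geq 2}\mu_k(N)=\bigcup_{k\geq 1}\mu_k(N)$, hence equals $\widetilde N$, so $S$ is already an automorphism and one may simply put $\alpha:=S^{-1}$.)

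Finally I would check $\beta=\iota^*\circ\alpha\circ\iota$. By Remark \ref{rem78787777754}, $\iota^*=\mu_1^{-1}\circ\mathbb{E}_{\mu_1(N)}$; since $\mathbb{E}_{\mu_1(N)}$ fixes $\mu_1(N)$ pointwise, $\iota^*\circ\mu_1=\text{id}_N$. On the other hand $\alpha\circ\iota=\alpha\circ\mu_1=\mu_1\circ\beta$, and $\alpha$ carries $\mu_1(N)$ into $\mu_1(\beta(N))\subseteq\mu_1(N)$, so
\begin{equation*}
\iota^*\circ\alpha\circ\iota=\iota^*\circ(\mu_1\circ\beta)=(\iota^*\circ\mu_1)\circ\beta=\beta\,.
\end{equation*}
I expect the single delicate point to be the orientation of the shift: the forward shift $S$ enlarges $\mu_1(N)$ to $\mu_2(N)$, and a short computation of the same type gives $\iota^*\circ S\circ\iota=\beta^*$ rather than $\beta$; one must therefore use the "backward" shift $\alpha=S^{-1}$, which contracts the tower. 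Apart from this, and from the fact that the analytic content—faithfulness of the limit state and good behaviour of the modular group—has already been isolated inside Lemma \ref{lem7000000000000}, the argument is routine bookkeeping with the universal property.
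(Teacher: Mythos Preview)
Your proof is correct and follows essentially the same approach as the paper: the same inductive limit $(\widetilde N,\widetilde\psi)$ of the constant tower $(N,\psi)\xrightarrow{\beta}(N,\psi)\xrightarrow{\beta}\cdots$, the same embedding $\iota=\mu_1$, and the same automorphism $\alpha$ (your family $\lambda_1=\mu_1\circ\beta$, $\lambda_k=\mu_{k-1}$ is identical to the paper's $\lambda_k=\mu_k\circ\beta$, since $\mu_k\circ\beta=\mu_{k-1}$). The only cosmetic difference is in verifying that $\alpha$ is bijective: the paper shows surjectivity directly by observing $\mu_k(N)=\alpha(\mu_{k+1}(N))\subseteq\mathrm{Im}(\alpha)$, whereas you additionally construct the forward shift $S$ and check $\alpha S=S\alpha=\mathrm{id}$; your alternative surjectivity argument for $S$ is the exact analogue of the paper's argument for $\alpha$.
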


\begin{proof}
Let $(\widetilde{N}, \widetilde{\psi})$ be the von Neumann algebra inductive limit of the sequence
$$\xymatrix{(N,\psi) \ar[r]^-{\beta} & (N,\psi) \ar[r]^-{\beta} & (N,\psi) \ar[r]^-{\beta} & \cdots},
$$
and for every $k\geq 1$\,, let $\mu_k \colon N \to \widetilde{N}$ be the associated $*$-monomorphism from the $k^{\text{th}}$ copy of $N$ into $\widetilde{N}$.
By the second part of Lemma \ref{lem7000000000000} applied to the $*$-monomorphisms $\lambda_k \colon N \to \widetilde{N}$ given by $\lambda_k = \mu_k \circ \beta$, there exists a $*$-monomorphism $\alpha$ on $\widetilde{N}$ such that $\alpha \circ \mu_{k} = \mu_k \circ \beta$\,, for all $k\geq 1$.  It follows that
$$\bigcup_{k=1}^\infty \mu_k(N) = \bigcup_{k=1}^\infty \mu_{k+1} \circ \beta(N) =\bigcup_{k=1}^\infty \alpha \circ \mu_{k+1}(N) \subseteq {\mathrm{Im}}(\alpha).$$
As  $\bigcup_{k=1}^\infty \mu_k(N)$ is dense in $\widetilde{N}$ and the image of $\alpha$ is a von Neumann subalgebra of $\widetilde{N}$, this shows that $\alpha$ is onto, and hence an automorphism.

Take $\iota \colon N \to \widetilde{N}$ to be $\mu_1$. Then $\alpha \circ \iota = \iota \circ \beta$. Moreover, by Lemma \ref{lem7000000000000}, $\iota$ is a $\psi$ -Markov map. Since \[ \widetilde{\psi}\circ \alpha\circ \mu_k=\widetilde{\psi}\circ \mu_k\circ \beta=\psi\circ \beta=\psi=\widetilde{\psi}\circ \mu_k\,, \quad k\geq 1\,, \]
$\widetilde{\psi}\circ \alpha$ and $\widetilde{\psi}$ coincide  on $\bigcup_{k\geq 1}^\infty \mu_k(N)$\,. Therefore, $\widetilde{\psi}=\widetilde{\psi}\circ \alpha$\,.
The existence of the adjoint map $\iota^* \colon \widetilde{N} \to N$ follows from Remark \ref{rem78787777754}. As $\iota^* \circ \iota$ is the identity on $N$ we get that $\beta = \iota^* \circ \alpha \circ \iota$ using the previously obtained identity $\iota \circ \beta = \alpha \circ \iota$. This completes the proof.
\end{proof}

{\bf Proof of Theorem \ref{th77777777232323}}:
The implication $(3)\Rightarrow (2)$ is trivial. Also, the implication $(2)\Rightarrow (1)$ follows immediately, since if $(N, \psi, \alpha, \iota)$ is a dilation of $T$\,, then $T=\iota^* \circ (\alpha\circ \iota)$
is a factorization of $T$ through $(N, \psi)$ in the sense of Definition \ref{defi898}, because $\alpha\in \text{Aut}(N, \psi)$ implies that $\alpha\circ \sigma_t^\psi=\sigma_t^\psi\circ \alpha$\,, $t\in \mathbb{R}$\,.

Next we prove that $(1)\Rightarrow (3)$\,. Assume that $T$ is factorizable. Then, by Theorem 6.6 in \cite{AD}, there exists a von Neumann algebra $N$ with a normal faithful state $\psi$\,, a $\psi$-Markov normal $*$-endomorphism $\beta\colon N\rightarrow N$ and a $(\phi, \psi)$-Markov $*$-monomorphism $j\colon M\rightarrow N$ such that $T^n=j^*\circ \beta^n \circ j$\,, for all $n\geq 1$\,. By Proposition \ref{prop80000000000}, we can find a dilation $(\tilde{N}\,, \tilde{\psi}\,, \alpha\,, \iota)$ of $\beta$\,, where $\alpha\in \text{Aut}(\tilde{N}\,, \tilde{\psi})$\,. We may (and will) consider $N$ as a subalgebra  of $\tilde{N}$\,. In this way, $\iota$ is just the inclusion map, $\iota^*$ is the $\tilde{\psi}$-preserving normal faithful conditional expectation of $\tilde{N}$ onto $N$\,, and $\beta=\alpha_{\vert_N}$\,. Then it is clear that with $\widetilde{j}\colon = \iota\circ j$, the quadruple $(\widetilde{N}, \widetilde{\psi}, \alpha, \widetilde{j})$ is a dilation of $T$ (which actually proves $(1)\Rightarrow (2)$). To complete the proof of the implication $(1)\Rightarrow (3)$, we will show that by the construction of $\beta$ from \cite{AD}, the quadruple
$(\widetilde{N}, \widetilde{\psi}, \alpha, \widetilde{j})$ becomes a Markov dilation of $T$\,.

For $J\subseteq \{n\in \mathbb{Z}: n\geq 0\}$\,, let $\mathbb{E}_J$ denote the unique $\psi$-preserving conditional expectation of $N$ onto its subalgebra ${\mathcal B}_J\colon = (\bigcup_{k\in J} \beta^k \circ j(M))^{\prime\prime}$\,. Then, by condition $(6.2)$ in Theorem 6.6 of \cite{AD}, we get
\begin{equation}\label{eq5555555555555543}
\mathbb{E}_{[0, n+k]}\circ \beta^k=\beta^k \circ \mathbb{E}_{[0, n]}\,, \quad n, k\geq 0\,.
\end{equation}
Moreover, with ${\mathcal B}$ and $({\mathcal B}_n\,, \phi_n)$\,, $n\geq 0$ defined as in the proof of the above-mentioned theorem, we have ${\mathcal B}_n={\mathcal B}_{[0, n]}$  and $N={\mathcal B}={\mathcal B}_{[0, \infty)}$\,. Hence $\mathbb{E}_{[0, n]}=\mathbb{E}_{{\mathcal B}_n}$\,, the unique $\psi$-invariant conditional expectation of $N$ onto ${\mathcal B}_n$\,, and $\mathbb{E}_{[0, \infty)}=\mathbb{E}_N=\text{id}_N$\,.
Set now $H\colon = L^2(N, \psi)$\,, $H_n\colon = L^2({\mathcal B}_n\,, \phi_n)$\,, $n\geq 0$\,, and let $V\colon = \widetilde{\beta}$\,, be the unique extension of $\beta$ to an isometry on $L^2(N, \psi)$\,. By (\ref{eq5555555555555543}),
\begin{equation*}
P_{H_{n+k}} V^k=V^k P_{H_n}\,, \quad n, k\geq 0\,,
\end{equation*}
where $P_K\in {\mathcal B}(H)$ denotes the orthogonal projection onto a closed subspace $K$ of $H$\,. Hence,
\begin{equation}\label{eq22222222222222227}
P_{H_{n+k}} P_{V^k(H)}=P_{H_{n+k}} V^k (V^*)^k=V^k P_{H_n}(V^*)^k=P_{V^k(H_n)}\,, \quad n, k\geq 0\,.
\end{equation}
By the definition, it is clear that $\beta^k({\mathcal B}_J)={\mathcal B}_{J+k}$\,, for all $k\geq 0$ and all $J\subseteq \{n\in \mathbb{Z}: n\geq 0\}$\,. In particular,
\begin{equation*}
\beta^k(N)=\beta^k({\mathcal B}_{[0, \infty)})={\mathcal B}_{[k, \infty)}\,, \quad \beta^k({\mathcal B}_n)={\mathcal B}_{[k, k+n]}\,, \qquad n, k\geq 0\,.
\end{equation*}
Thus, by restricting (\ref{eq22222222222222227}) to $N\subseteq L^2(N, \psi)$\,, we get
$\mathbb{E}_{[0, n+k]} \mathbb{E}_{[k, \infty)}=\mathbb{E}_{[k, k+n]}$\,, for all $n, k\geq 0$\,.
In particular, we have
\begin{equation}\label{eq22222222222233322}
\mathbb{E}_{[0, k]} \mathbb{E}_{[k, \infty)}=\mathbb{E}_{\{k\}}\,, \quad k\geq 0\,.
\end{equation}
Since $\widetilde{j}=\iota\circ j$\,, we have from Proposition \ref{prop80000000000} that
\begin{equation*}
\alpha^k \circ \tilde{j}(M)=\beta^k \circ j(M)\,, \quad k\geq 0\,.
\end{equation*}
Hence, by composing (\ref{eq22222222222233322}) from the right with the $\widetilde{\psi}$-preserving conditional expectation $\iota^*$ of $\tilde{N}$ onto $N$, we get (following the notation set forth in Definition \ref{def45666669}) that
\begin{equation}\label{eq101010}
\mathbb{P}_{[0, k]} \mathbb{P}_{[k, \infty)}=\mathbb{P}_{\{k\}}\,, \quad k\geq 0\,.
\end{equation}
Note that for every $I\subseteq \mathbb{Z}$\,, by the definition of $\mathbb{P}_I$ one has $\alpha^n \mathbb{P}_I \alpha^{-n}=\mathbb{P}_{I+n}$\,, for all $n\in \mathbb{Z}$\,. Hence, from (\ref{eq101010}) we get that
\begin{equation*}
\mathbb{P}_{[-n, 0]} \mathbb{P}_{[0, \infty)}=\mathbb{P}_{\{0\}}\,, \quad n\geq 0\,.
\end{equation*}
In the limit as $n\rightarrow \infty$\,, this yields
\begin{equation*}
\mathbb{P}_{(-\infty, 0]} \mathbb{P}_{[0, \infty)}=\mathbb{P}_{\{0\}}\,,
\end{equation*}
a condition which, by Remark \ref{rem899999999}, ensures that $(\tilde{N}\,, \tilde{\psi}\,, \alpha\,, \tilde{j})$ is a Markov dilation of $T$\,.\qed

Note that by the proof of Theorem \ref{th77777777232323} it follows that a $\phi$-Markov map $T\colon M\rightarrow M$ admits a dilation if and only if it has a dilation of order 1 (see Definition \ref{def456666}), since in order to show that $(2)\Rightarrow (1)$ above we have only used the existence of a dilation of order 1 for the given map.

\begin{rem}\label{rem8080}\rm
K\"ummerer constructed in \cite{Ku1} examples of $\tau_n$-Markov maps on $M_n(\mathbb{C})$\,, $n\geq 3$\,, having no dilation, as follows:

\noindent $(1)$ Let $a_1={\frac1{\sqrt{2}}}\left(
\begin{array}
[c]{ccc}%
0 & 0 & 0\\
1 & 0 & 0\\
0 & 1 & 0
\end{array}
\right)$\,, $a_2={\frac1{\sqrt{2}}}\left(
\begin{array}
[c]{ccc}%
0 & 1 & 0\\
0 & 0 & 1\\
0 & 0 & 0
\end{array}
\right)$\,, and $a_3={\frac1{\sqrt{2}}}\left(
\begin{array}
[c]{ccc}%
0 & 0 & 1\\
0 & 0 & 0\\
1 & 0 & 0
\end{array}
\right)$\,.
Then the map given by
$Tx\colon= \sum_{i=1}^3 a_i^* x a_i$\,, $x\in
M_3(\mathbb{C})$\,, is a $\tau_3$-Markov map having no dilation.\\[0.1cm]
\noindent $(2)$  Let $n\geq 4$ and consider the $n\times n$ diagonal matrices
$a_1=\text{diag}(1\,, 1/{\sqrt{2}}\,, 1/{\sqrt{2}}\,, 0\,, \ldots\,, 0)$ and $a_2=\text{diag}(0\,, 1/{\sqrt{2}}\,, i/{\sqrt{2}}\,, 1\,, \ldots\,, 1)$.
Then the map given by $Tx\colon= \sum_{i=1}^2 a_i^* x a_i$\,, $x\in
M_n(\mathbb{C})$\,, is a $\tau_n$-Markov Schur multiplier with no dilation.

In view of Koestler's communication, these are all examples of non-factorizable Markov maps. In \cite{Ku1}, K\"ummerer also constructed an example of a $\tau_6$-Markov Schur multiplier on $M_6(\mathbb{C})$ which admits a dilation, hence it is factorizable, but does not lie in $\text{conv}(\text{Aut}(M_6(\mathbb{C})))$. However, he did not consider the one-parameter semigroup case.

We were also informed by B. V. R. Bhat and A. Skalski \cite{BhS} that, unaware of K\"ummerer's examples and their connection with Anantharaman-Delaroche's problem, as well as of our already existing work, they have also constructed examples of a non-factorizable $\tau_3$-Markov map on $M_3(\mathbb{C})$\,, respectively, of a $\tau_4$-Markov Schur multiplier on $M_4(\mathbb{C})$ which is not factorizable.
\end{rem}

\section{The noncommutative Rota dilation property}

The following was introduced in \cite{JMX} (see Definition 10.2 therein):

\begin{defi}\label{defrotadil}
Let $M$ be a von Neumann algebra equipped with a normalized (normal and faithful) trace $\tau$. We say that a bounded operator $T\colon M\rightarrow M$ satisfies the Rota dilation property if there exists a von Neumann algebra $N$ equipped with a normalized (normal and faithful) trace $\tau_N$\,, a normal unital faithful $*$-representation $\pi\colon M\rightarrow N$ which preserves the traces (i.e., $\tau_N\circ \pi=\tau$), and a decreasing sequence $(N_m)_{m\geq 1}$ of von Neumann subalgebras
of $N$ such that
\[ T^m=Q\circ \mathbb{E}_m\circ \pi\,, \quad m\geq 1\,. \]
Here $\mathbb{E}_m$ denotes the canonical (trace-preserving) conditional expectation of $N$ onto $N_m$\,, and $Q\colon N\rightarrow M$ is the conditional expectation associated to $\pi$\,, that is, $Q\colon =\pi^*=\pi^{-1}\circ \mathbb{E}_{\pi(M)}$\,, where $\mathbb{E}_{\pi(M)}$ is the trace-preserving conditional expectation of $N$ onto $\pi(M)$.
\end{defi}

\begin{rem}\label{rem102}\rm If $T\colon M\rightarrow M$ has the Rota dilation property, then $T$ is completely positive, unital and trace-preserving. Since in the tracial setting condition $(4)$ in Definition \ref{defmarkov} is trivially satisfied, it follows that $T$ is automatically a $\tau$-Markov map. Moreover, since
$\mathbb{E}_1$ (viewed as an operator from $N$ into $N$) can be written as $\mathbb{E}_1=j_1^*\circ {j_1}$\,, where $j_1:N_1\hookrightarrow N$ is the inclusion map, then
\begin{equation}\label{eq345} T= Q\circ \mathbb{E}_1\circ \pi=(j_1^*\circ \pi)^*\circ (j_1^*\circ \pi)\,. \end{equation}
Hence, $T$ is positive as an operator on the pre-Hilbert space M with inner product $\langle x, y\rangle\colon =\tau(y^* x)$\,, $x, y\in M$\,. This also implies that $T=T^*$\,, where $T^*\colon M\rightarrow M$ is the adjoint of $T$ in the sense of (\ref{eq5000567}). (This observation is also stated in \cite{JMX} (cf. Remark 10.3 therein)).

Furthermore, equalities (\ref{eq345}) show that the Rota dilation property implies factorizability of $T$, in view of Remark \ref{rem787877777} $(b)$.
\end{rem}

The following is a consequence of Theorem 6.6 in \cite{AD}:

\begin{theorem}\label{th6767}
If $T:M\rightarrow M$ is a factorizable $\tau$-Markov map with $T=T^*$, then $T^2$ has the Rota dilation property.
\end{theorem}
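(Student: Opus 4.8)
The plan is to produce, out of the factorizability of $T$, the data of Definition \ref{defrotadil} for $T^{2}$ by ``folding'' the endomorphism dilation of $T$ supplied by Theorem 6.6 in \cite{AD}. Since $T$ is a factorizable $\tau$-Markov map on the finite von Neumann algebra $M$, that theorem produces a von Neumann algebra $N$ with a normal faithful state $\psi$, a $\psi$-Markov normal $*$-endomorphism $\beta\colon N\rightarrow N$ and a $(\tau,\psi)$-Markov $*$-monomorphism $j\colon M\rightarrow N$ with $T^{n}=j^{*}\circ\beta^{n}\circ j$ for all $n\geq 1$; moreover, since $\tau$ is tracial, one may --- by Remark \ref{rem787877777}$(a)$ together with the fact that the inductive-limit construction of Proposition \ref{prop80000000000} preserves traciality --- take $\psi=:\tau_{N}$ to be a normal faithful tracial state, so that $N$ is finite. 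As in the proof of Theorem \ref{th77777777232323}, write $\mathbb{E}_{J}$ for the $\tau_{N}$-preserving conditional expectation of $N=\mathcal{B}_{[0,\infty)}$ onto $\mathcal{B}_{J}=\bigl(\bigcup_{k\in J}\beta^{k}\circ j(M)\bigr)^{\prime\prime}$; recall that $\beta^{k}(N)=\mathcal{B}_{[k,\infty)}$, that (\ref{eq5555555555555543}) gives $\mathbb{E}_{[0,n+k]}\circ\beta^{k}=\beta^{k}\circ\mathbb{E}_{[0,n]}$, and that (\ref{eq22222222222233322}) gives $\mathbb{E}_{[0,k]}\circ\mathbb{E}_{[k,\infty)}=\mathbb{E}_{\{k\}}$, for all $n,k\geq 0$.

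Now I would set $\pi\colon=j$, so that $\pi$ is a normal, unital, faithful, trace-preserving $*$-representation of $M$ in $N$ and $Q\colon=\pi^{*}=j^{*}=j^{-1}\circ\mathbb{E}_{j(M)}$; and I would put $N_{m}\colon=\beta^{m}(N)=\mathcal{B}_{[m,\infty)}$ for $m\geq 1$. Since $\beta$ is unital, $N_{m+1}=\beta^{m}(\beta(N))\subseteq\beta^{m}(N)=N_{m}$, so $(N_{m})_{m\geq 1}$ is a decreasing sequence of von Neumann subalgebras of $N$; and since $\beta^{m}$ is a trace-preserving $*$-endomorphism it extends to an isometry on $L^{2}(N,\tau_{N})$, whence the canonical conditional expectation $\mathbb{E}_{m}$ of $N$ onto $N_{m}$ equals $\beta^{m}\circ(\beta^{m})^{*}=\mathbb{E}_{[m,\infty)}$. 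It then remains to verify $T^{2m}=Q\circ\mathbb{E}_{m}\circ\pi$ for all $m\geq 1$. Using both $T^{m}=j^{*}\circ\beta^{m}\circ j$ and, by $T=T^{*}$, $T^{m}=(T^{m})^{*}=j^{*}\circ(\beta^{m})^{*}\circ j$, together with $j\circ j^{*}=\mathbb{E}_{j(M)}=\mathbb{E}_{\{0\}}$, one computes
\[
T^{2m}=T^{m}\circ T^{m}=j^{*}\circ\beta^{m}\circ\mathbb{E}_{\{0\}}\circ(\beta^{m})^{*}\circ j .
\]
By (\ref{eq5555555555555543}) with $n=0,k=m$ one has $\beta^{m}\circ\mathbb{E}_{\{0\}}=\mathbb{E}_{[0,m]}\circ\beta^{m}$, and then $\beta^{m}\circ(\beta^{m})^{*}=\mathbb{E}_{[m,\infty)}$ and (\ref{eq22222222222233322}) turn the right-hand side into $j^{*}\circ\mathbb{E}_{[0,m]}\circ\mathbb{E}_{[m,\infty)}\circ j=j^{*}\circ\mathbb{E}_{\{m\}}\circ j$; finally, as $j(M)=\mathcal{B}_{\{0\}}\subseteq\mathcal{B}_{[0,m]}$ we have $j^{*}\circ\mathbb{E}_{[0,m]}=j^{*}$, so $T^{2m}=j^{*}\circ\mathbb{E}_{[m,\infty)}\circ j=Q\circ\mathbb{E}_{m}\circ\pi$, which is exactly the Rota dilation property of $T^{2}$.

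The argument is essentially bookkeeping within machinery already available, so I do not anticipate a serious obstacle; the points needing care are (i) arranging that the endomorphism dilation of Theorem 6.6 in \cite{AD} can be taken with a \emph{tracial} ambient state --- this is precisely where the tracial hypothesis on $T$ and the finiteness of $M$ are used (via Remark \ref{rem787877777}$(a)$ and Proposition \ref{prop80000000000}) --- and (ii) the identifications $\beta^{m}(N)=\mathcal{B}_{[m,\infty)}$ and $\mathbb{E}_{m}=\beta^{m}\circ(\beta^{m})^{*}$, which rest on $\beta$ being a trace-preserving $*$-endomorphism, hence an $L^{2}$-isometry. The necessity of the \emph{square} is transparent in the computation: self-adjointness is invoked exactly once, to trade one of the two copies of $\beta^{m}$ for its adjoint $(\beta^{m})^{*}$, so that the sandwich $\beta^{m}\circ(\,\cdot\,)\circ(\beta^{m})^{*}$ collapses to a conditional expectation onto $N_{m}$; for an odd power $T^{2m+1}=j^{*}\circ\beta^{2m+1}\circ j$ there is a leftover $\beta$ that cannot be absorbed in this way, consistently with the fact (see Theorem \ref{th5656}) that a factorizable self-adjoint Markov map need not itself have the Rota dilation property. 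Alternatively, one can run the same computation through the \emph{Markov} dilation $(\widetilde{N},\widetilde{\psi},\alpha,\widetilde{j})$ of $T$ from Theorem \ref{th77777777232323}, with $\alpha$ an automorphism: self-adjointness there gives $T^{2m}=\widetilde{j}^{*}\circ\mathbb{P}_{\{m\}}\circ\widetilde{j}=\widetilde{j}^{*}\circ\mathbb{P}_{[m,\infty)}\circ\widetilde{j}$, producing the same Rota data with $N_{m}=\bigl(\bigcup_{k\geq m}\alpha^{k}\circ\widetilde{j}(M)\bigr)^{\prime\prime}$.
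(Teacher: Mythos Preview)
Your proof is correct and follows essentially the same approach as the paper's: both invoke Theorem~6.6 of \cite{AD} to obtain the endomorphism dilation $(N,\psi,\beta,j)$, and both produce the Rota data with $\pi=j$ and $N_m=\mathcal{B}_{[m,\infty)}$, arriving at $(T^2)^m=j^*\circ\mathbb{E}_{[m,\infty)}\circ j$. The only difference is bookkeeping: the paper quotes \emph{both} relations of Theorem~6.6 of \cite{AD} --- in particular $\mathbb{E}_{[n}\circ J_0=J_n\circ(T^*)^n$ --- and combines them in two lines, whereas you quote only $T^n=j^*\circ\beta^n\circ j$ and rebuild the second relation via the Section~4 identities (\ref{eq5555555555555543}) and (\ref{eq22222222222233322}); your treatment of the traciality of $\psi$ is also slightly more explicit than the paper's.
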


\begin{proof}
Since $T$ is factorizable, Theorem 6.6 in \cite{AD} ensures the existence of a von Neumann algebra $N$ with a normal faithful state $\psi$, a normal unital endomorphism $\beta: N\rightarrow N$ which is $\psi$-Markov and a normal unital $*$-homomorphism $J_0:M\rightarrow N$ which is $(\phi, \psi)$-Markov such that, if we set $J_n:=\beta^n\circ J_0$ and ${\mathbb{E}}_{[n}$ denotes the conditional expectation of $N$ onto its von Neumann subalgebra generated by $\bigcup_{k\geq n} J_k(M)$ for all $n\geq 0$\,, while ${\mathbb{E}}_{0]}$ is the conditional expectation of $N$ onto $J_0(M)$\,, then
\begin{eqnarray}\label{eq343434}
{\mathbb{E}}_{0]}\circ J_n&=& J_0\circ T^n\,, \qquad \,\,n\geq 1\,,\\
{\mathbb{E}}_{[n}\circ J_0&=& J_n\circ (T^*)^n\,, \quad n\geq 1\,.\nonumber
\end{eqnarray}
It follows that
$J_0\circ T^n\circ (T^*)^n={\mathbb{E}}_{0]}\circ J_n\circ (T^*)^n={\mathbb{E}}_{0]}\circ {\mathbb{E}}_{[n}\circ J_0$\,, $n\geq 1$\,.
Since $T^*=T$\,, we infer that
\begin{equation}\label{eq3333}
(T^2)^n=T^n\circ (T^*)^n=J_0^{-1}\circ {\mathbb{E}}_{0]}\circ {\mathbb{E}}_{[n}\circ J_0=J_0^*\circ {\mathbb{E}}_{[n}\circ J_0\,, \quad n\geq 1\,. \end{equation}
Observing that $({\mathbb{E}}_{[n})_{n\geq 1}$ is a sequence of conditional expectations with decreasing ranges, (\ref{eq3333}) shows that $T^2$ has the Rota dilation property.
\end{proof}

Note that if $M$ is abelian and $T:M\rightarrow M$ is factorizable, then, following the construction in \cite{AD}, one can choose an abelian dilation $N$ for $T$\,. Therefore, Theorem \ref{th6767} is a noncommutative analogue of Rota's classical dilation theorem for Markov operators. The next result shows that the factorizability condition cannot be removed from the hypothesis of Theorem \ref{th6767}, thus the Rota dilation theorem does not hold in general in the noncommutative setting.

\begin{theorem}\label{th5656}
There exists a $\tau_n$-Markov map $T\colon M_n(\mathbb{C})\rightarrow M_n(\mathbb{C})$\,, for some $n\geq 1$\,, such that $T=T^*$,
but $T^2$ is not factorizable. In particular, $T^2$ does not have the Rota dilation property.
\end{theorem}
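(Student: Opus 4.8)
The plan is the following. By Remark~\ref{rem102} the Rota dilation property implies factorizability, so the second assertion of the statement follows from the first, and it suffices to produce, for some $n$, a self-adjoint $\tau_n$-Markov map $T$ on $M_n(\mathbb{C})$ with $T^2$ not factorizable. (Such a $T$ is then necessarily non-factorizable, by Theorem~\ref{th6767}, so this is exactly the assertion that the factorizability hypothesis of Theorem~\ref{th6767} cannot be dropped.) Since $T=T^*$ forces $T^2=T^*T$, the real point is to find a $\tau_m$-Markov map $\phi$ on some $M_m(\mathbb{C})$ for which $\phi^*\phi$ is not factorizable --- and this is delicate, because for every non-factorizable map obtained so far (the $*$-preserving Schur multipliers, and the $M_3(\mathbb{C})$-examples of Example~\ref{exp1} and of Remark~\ref{rem8080}) the composition $\phi^*\phi$ happens to be factorizable (it lies in $\mathrm{conv}(\mathrm{Aut})$ in the self-adjoint cases, and is a Schur multiplier of a real matrix in the Schur-multiplier cases), so a genuinely new $\phi$ must be constructed.

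Granting such a $\phi$, the return to a self-adjoint map is a routine doubling. On $M_{2m}(\mathbb{C})=M_2(\mathbb{C})\otimes M_m(\mathbb{C})$ let $\mathbb{E}$ be the trace-preserving conditional expectation onto the block-diagonal copy of $M_m(\mathbb{C})\oplus M_m(\mathbb{C})$, let $\iota$ be the inclusion, and put $T_\phi:=\iota\circ\Psi\circ\mathbb{E}$, where $\Psi(a\oplus b):=\phi^*(b)\oplus\phi(a)$. Using $\mathbb{E}^*=\iota$, $\iota^*=\mathbb{E}$ and $(\phi^*)^*=\phi$ one finds $\Psi^*=\Psi$, so $T_\phi$ is a self-adjoint $\tau_{2m}$-Markov map and $T_\phi^2=\iota\circ(\phi^*\phi\oplus\phi\phi^*)\circ\mathbb{E}$. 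The projection $p:=1_m\oplus 0$ satisfies $T_\phi^2(p)=p$, and $T_\phi^2$ restricts on the corner $pM_{2m}(\mathbb{C})p\cong M_m(\mathbb{C})$ to $\phi^*\phi$. Now factorizability passes to such corners: if $\Phi$ is a factorizable $\tau$-Markov map on a finite von Neumann algebra with $\Phi(p)=p$ for a projection $p$, and $\Phi=J_0^*\circ J_1$ is a factorization through a tracial $(P,\tau_P)$ (cf.\ Remark~\ref{rem787877777}$(a)$), then $\tau_P(J_0(p)J_1(p))=\tau(p\,\Phi(p))=\tau(p)=\tau_P(J_0(p))=\tau_P(J_1(p))$ forces $J_0(p)=J_1(p)$, and compressing $J_0,J_1$ to $pMp$ yields a factorization of $\Phi|_{pMp}$. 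Hence, if $T_\phi^2$ were factorizable, so would be $\phi^*\phi$; thus $T:=T_\phi$ works.

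To construct $\phi$, I would take $\phi(x)=a_1^*xa_1+a_2^*xa_2$ with $a_1,a_2\in M_m(\mathbb{C})$ subject to $a_1^*a_1+a_2^*a_2=a_1a_1^*+a_2a_2^*=1_m$ (so that $\phi$ is a $\tau_m$-Markov map); such pairs abound --- fix any $a_1$ with $\|a_1\|<1$, pick a unitary $v$ with $v(a_1^*a_1)v^*=a_1a_1^*$, and set $a_2:=v(1_m-a_1^*a_1)^{1/2}$. Then $\phi^*(y)=a_1ya_1^*+a_2ya_2^*$, hence $\phi^*\phi(x)=\sum_{i,j=1}^2 a_ia_j^*\,x\,a_ja_i^*=\sum_{i,j=1}^2 c_{ij}^*\,x\,c_{ij}$ with $c_{ij}:=a_ja_i^*$, and a short computation gives $\sum_{i,j}c_{ij}^*c_{ij}=\sum_{i,j}c_{ij}c_{ij}^*=1_m$ and $c_{ij}^*c_{kl}=a_ia_j^*a_la_k^*$. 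By Corollary~\ref{cor1}, $\phi^*\phi$ is not factorizable as soon as the sixteen matrices $\{a_ia_j^*a_la_k^*:1\le i,j,k,l\le 2\}$ are linearly independent in $M_m(\mathbb{C})$.

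The main obstacle is precisely this linear-independence requirement. A dimension count leaves just enough room at $m=4$ (where the sixteen words must form a basis of $M_4(\mathbb{C})$), and in general the requirement is the non-vanishing of one $16\times16$ determinant in the entries of $a_1$ and $v$; but it has to be arranged carefully, since the obvious ans\"atze collapse the span: taking $a_1,a_2$ proportional to unitaries puts $\phi$ in $\mathrm{conv}(\mathrm{Aut})$, while taking $a_2=a_1^*$ (so that $a_1^*a_1+a_1a_1^*=1_m$) forces $a_1$ to be essentially block-off-diagonal, and in either case all degree-four words lie in a proper subspace. So the residual task is to pin down an explicit, suitably generic pair $(a_1,a_2)$ --- working, if convenient, in a somewhat larger $M_m(\mathbb{C})$ for slack --- and to verify the linear independence of the sixteen products; granting this, $T=T_\phi$ is a self-adjoint $\tau_{2m}$-Markov map with $T^2$ not factorizable, so that, by Remark~\ref{rem102}, $T^2$ has no Rota dilation property.
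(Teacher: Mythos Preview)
Your doubling--and--corner reduction is correct and is a genuinely different route from the paper's. Once one has a $\tau_m$-Markov map $\phi$ with $\phi^*\phi$ non-factorizable, your $T_\phi=\iota\circ\Psi\circ\mathbb{E}$ is indeed a self-adjoint $\tau_{2m}$-Markov map, and the projection argument showing $J_0(p)=J_1(p)$ (via $\tau_P((J_0(p)-J_1(p))^2)=0$) together with the compression to $ePe$ is sound. By contrast, the paper never passes through a non-self-adjoint $\phi$ at all: it builds $T$ directly in the form $T(x)=\sum_{i=1}^d a_ixa_i$ with $d\geq 5$ \emph{self-adjoint} $a_i$ satisfying $a_i^2a_j=a_ja_i^2$, and proves by hand (Lemma~\ref{lem6767}) that $T^2$ is non-factorizable. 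The commutation relations there force many coincidences among the degree-four products $a_ia_ja_ka_l$, so Corollary~\ref{cor1} cannot be invoked; instead the paper expands $u^*u=1$ along a carefully chosen basis, extracts the relations $v_{ii}^2=1$, $v_{ij}v_{kl}=0$ ($i\neq j\neq k\neq l$), bounds $\|v_{ij}\|\leq 2$, and derives a trace contradiction from $d\geq 5$ orthogonal support projections. (Remark~5.6 even shows this fails for $d\leq 4$, so the threshold is sharp for that scheme.) The actual $a_i$ are then produced as tensors $b_i\otimes u_i$ via Lemma~\ref{lem77887788}, with the $b_i$ coming from restricting coordinate functions on $S^{d-1}$ to a finite set and the $u_i$ from a finite quotient of $\mathbb{Z}_2*\cdots*\mathbb{Z}_2$.

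The real issue with your proposal is that the ``residual task'' you flag is the entire content of the theorem, and your own parametrization shows it is not as routine as the determinant heuristic suggests. Writing $a_2=v(1-a_1^*a_1)^{1/2}$ with $va_1^*a_1v^*=a_1a_1^*$: if $a_1$ has distinct singular values, $v$ is forced (up to a diagonal unitary) to carry the right singular vectors of $a_1$ to the left ones, so $a_1,a_2$ share the \emph{same} singular-vector frames. Then every $a_ia_j^*$ is simultaneously diagonalizable (in the left frame), all sixteen $a_ia_j^*a_la_k^*$ commute, and since they depend only on the multiset $\{i,j,l,k\}$ there are at most five distinct ones --- linear independence fails badly. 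So one must force repeated singular values in $a_1$ to gain freedom in $v$, or abandon this parametrization altogether and choose the two isometries $\binom{a_1}{a_2}$ and $\binom{a_1^*}{a_2^*}$ with genuinely unrelated ranges in $\mathbb{C}^{2m}$; either way, an explicit example (or an honest irreducibility/Zariski argument on the constraint variety) must be supplied and verified. Until that is done, the proof is incomplete. If you can close this gap, your argument would yield a much smaller $n$ than the paper's construction (cf.\ Remark~\ref{remat78}); if not, the paper's Lemma~\ref{lem6767}/\ref{lem77887788} route, though heavier, is complete.
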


To prove the theorem, we start with the following

\begin{lemma}\label{lem6767}
Let $n, d\in \mathbb{N}$ and consider $a_1\,, \ldots \,, a_d\in M_n(\mathbb{C})$ be self-adjoint with $\sum_{i=1}^da_i^2=1_n$\,. Set
\begin{equation}\label{eq22233332222} T(x)\colon =\sum\limits_{i=1}^d a_i x a_i\,, \quad x\in M_n(\mathbb{C})\,. \end{equation}
Suppose that
\begin{enumerate}
\item [$(i)$] $a_i^2a_j=a_ja_i^2$\,, $1\leq i, j\leq d$\,.
\item [$(ii)$] $A\colon=\{a_ia_j: 1\leq i, j\leq d\}$ is linearly independent.
\item [$(iii)$] $B\colon=\cup_{i=1}^6 B_i$ is linearly independent, where
\end{enumerate}
$B_1\colon=\{a_ia_ja_ka_l: 1\leq i\neq j\neq k\neq l\leq d\}$\,, $B_2\colon=\{a_ia_ja_k^2: 1\leq i\neq j\neq k\neq k\leq d\}$\,, $B_3\colon= \{a_i^3 a_j: 1\leq i\neq j\leq d\}$\,, $B_4\colon=\{a_ia_j^3: 1\leq i\neq j\leq d\}$\,, $B_5\colon=\{a_i^2a_j^2: 1\leq i< j\leq d\}$\,, $B_6\colon=\{a_i^4: 1\leq i\leq d\}$\,. Furthermore, it is assumed that $B$ is the disjoint union of the sets $B_i$\,, $1\leq i\leq 6$\,, and that the elements listed in each $B_i$ are distinct.

Assume further that
\begin{enumerate}
\item [$(iv)$] $d\geq 5$\,.
\end{enumerate}
Then $T$ is a self-adjoint $\tau_n$-Markov map, for which $T^2$ is not factorizable.
\end{lemma}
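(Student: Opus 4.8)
The plan is as follows. That $T$ is a self-adjoint $\tau_n$-Markov map is routine: writing $T$ in the form (\ref{eq22233332222}) with $a_i^*=a_i$, Choi's theorem \cite{Ch} gives complete positivity, the identities $\sum_i a_i^*a_i=\sum_i a_ia_i^*=\sum_i a_i^2=1_n$ give unitality and trace-preservation, and self-adjointness follows from $\tau_n\big((\sum_i a_iya_i)^*x\big)=\tau_n\big(y^*\sum_i a_ixa_i\big)$ together with the fact that in the tracial case condition $(4)$ of Definition \ref{defmarkov} is automatic. The real content of the lemma is the non-factorizability of $T^2$.

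First I would rewrite $T^2(x)=\sum_{i,j=1}^d (a_ia_j)^*x(a_ia_j)$ and note that $\sum_{i,j}(a_ia_j)^*(a_ia_j)=\sum_j a_j^2=1_n=\sum_{i,j}(a_ia_j)(a_ia_j)^*$, so $T^2$ is a $\tau_n$-Markov map whose $d^2$ Kraus operators $a_ia_j$ are linearly independent by hypothesis (ii). Note that Corollary \ref{cor1} does \emph{not} apply to $T^2$: because of the commutation relations (i) the products $\{(a_ia_j)^*(a_ka_l)\}$ span only a proper subspace of $M_n(\mathbb{C})\otimes M_n(\mathbb{C})$ and so are not linearly independent, and one must instead re-run the argument behind Theorem \ref{th1}. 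Assume $T^2$ is factorizable; by the implication $(i)\Rightarrow(iii)$ of Theorem \ref{th1} there are a finite von Neumann algebra $N$ with a normal faithful tracial state $\tau_N$ and elements $w_{ij}\in N$ $(1\le i,j\le d)$ such that $U:=\sum_{i,j} a_ia_j\otimes w_{ij}$ is unitary in $M_n(\mathbb{C})\otimes N=M_n(N)$ and $\tau_N(w_{ij}^*w_{kl})=\delta_{(ij),(kl)}$; in particular $\|w_{ij}\|_2=1$, so each $w_{ij}\neq 0$.

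The heart of the proof is to expand the identities $U^*U=1_{M_n(N)}=UU^*$. Using hypothesis (i) — which forces every $a_i^2$ to commute with every $a_j$ — one checks by a short case analysis that every product $a_{x_1}a_{x_2}a_{x_3}a_{x_4}$ of four of the generators equals exactly one element of $B=B_1\cup\cdots\cup B_6$; since $B$ is linearly independent by (iii), the coefficients of the expansions of $U^*U$ and of $UU^*$ in the set $B$ may be read off and compared with those of $\sum_{i,j}a_i^2a_j^2\otimes 1_N$. Matching the coefficients of the elements of $B_6$ gives $w_{ii}^*w_{ii}=1_N$, so, $N$ being finite, each $w_{ii}$ is unitary; after replacing $w_{ij}$ by $w_{11}^*w_{ij}$ (which preserves all of the above) we may assume $w_{11}=1_N$. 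Matching the coefficients of $B_3$ and $B_5$ then forces $w_{1j}=-w_{j1}^*$; matching those of $B_1$ yields $w_{1j}^2=0$ together with the relations $w_{ab}^*w_{cd}=0$ whenever $a\neq b$, $a\neq c$, $c\neq d$ (and $w_{ab}w_{cd}^*=0$ whenever $a\neq b$, $b\neq d$, $c\neq d$); and matching those of $B_2$ gives $w_{ij}=-w_{ji}^*-w_{1i}^*w_{1j}$ for distinct $i,j\in\{2,\dots,d\}$. Substituting these relations back into the orthonormality conditions $\tau_N(w_{ij}^*w_{kl})=\delta_{(ij),(kl)}$ and using the finiteness of $N$ — together with the fact that, since $d\geq 5$, there are enough distinct indices to produce the relations $w_{ab}^*w_{cd}=0$ for indices in general position — one is led to a contradiction in the spirit of the obstruction in Example \ref{exp1} (a finite von Neumann algebra cannot contain two unitaries $v,v'$ with $v^*v'=0$). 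Hence $T^2$ is not factorizable, and in particular, by Remark \ref{rem102}, it does not have the Rota dilation property.

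The step I expect to be the main obstacle is precisely this last one: the purely combinatorial verification that every length-four product of the $a_i$ reduces to a unique member of $B$, which is what makes the coefficient-matching valid, and then the careful combination of the resulting quadratic relations among the $w_{ij}$ — keeping track, with the help of $d\geq 5$, of which indices must stay distinct — that forces the contradiction.
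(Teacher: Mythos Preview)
Your setup is essentially correct through the point where you expand $U^*U=UU^*=1_{M_n(N)}$ and match coefficients against the linearly independent set $B$. However, the proof has a genuine gap at exactly the point you flag as the main obstacle, and the gap is more serious than you indicate.

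First, you miss a simplification the paper exploits: since $T^2$ is self-adjoint, the unitary $u$ can be chosen self-adjoint (replace $N$ by $M_2(N)$ and $u$ by $\begin{pmatrix}0&u^*\\u&0\end{pmatrix}$). This gives $v_{ij}^*=v_{ji}$ for free and cuts the number of relations in half. Your approach without this trick is still workable, but it makes the bookkeeping heavier.

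Second, and more importantly, your endgame is not a proof. The appeal to ``the spirit of Example~\ref{exp1}'' is misleading: in that example the contradiction comes from \emph{unitaries} with orthogonal ranges, but here the off-diagonal $w_{ij}$ are not unitaries (you yourself derive $w_{1j}^2=0$), so no such contradiction is available. The actual obstruction is of a different nature and is where condition $(iv)$ enters in an essential way. From the $B_5$ coefficients one gets
\[
w_{ii}^*w_{jj}+w_{jj}^*w_{ii}+w_{ij}^*w_{ij}+w_{ji}^*w_{ji}=2\cdot 1_N,
\]
which together with the unitarity of $w_{ii}$ (from $B_6$) gives $\|w_{ij}\|\le 2$ for $i\neq j$. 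From the $B_1$ relations $w_{ab}^*w_{cd}=0$ (for $a\neq b$, $a\neq c$, $c\neq d$) one sees that the left support projections $p_a:=\bigvee_{b\neq a}s(w_{ab}w_{ab}^*)$ are pairwise orthogonal. Since $\tau_N(w_{ab}w_{ab}^*)=1$ and $w_{ab}w_{ab}^*\le 4p_a$, one gets $\tau_N(p_a)\ge 1/4$, hence $1\ge\sum_a\tau_N(p_a)\ge d/4$, contradicting $d\ge 5$. This trace-counting argument is the heart of the lemma, and it is absent from your proposal.
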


\begin{proof}
We have $T^2x=\sum_{i, j=1}^d (a_ia_j)^* x (a_i a_j)$\,, for all $x\in M_n(\mathbb{C})$\,.
It is clear that $T^2$ is a $\tau_n$-Markov map, for which Theorem \ref{th1} can be applied, due to condition $(ii)$. Hence, if $T^2$ were factorizable, it would then follow that there exists a finite von Neumann algebra $N$ with a norma faithful tracial state $\tau_N$\,, and a unitary $u\in M_n(N)$ such that
\[  T^2x \otimes 1_N=(\text{id}_{M_n(\mathbb{C})}\otimes {\tau_N})(u^*(x\otimes 1_N)u)\,, \quad x\in M_n(\mathbb{C})\,. \]
Moreover, since $T^2$ is self-adjoint, an easy argument shows that $u$ can be chosen to be self-adjoint. Namely, one can replace $N$ by $M_2(N)$ and $u$ by
$\tilde{u}:= \left(
\begin{array}
[c]{cc}%
0 & u^*\\
u & 0
\end{array}
\right)\in M_2(M_n(N))=M_n(M_2(N))$\,. Moreover, Theorem \ref{th1} ensures that $u$ is of the form
\begin{eqnarray}\label{eq345456}
u=\sum\limits_{i, j=1}^d a_i a_j \otimes v_{ij}\,,
\end{eqnarray}
where $v_{ij}\in N$\,, for all $1\leq i, j\leq d$\,, and
\begin{eqnarray}\label{eq345457}
\tau_N(v_{ij}^*v_{kl})=\delta_{ik}\delta_{jl}\,.
\end{eqnarray}
By $(i)$, the elements $v_{ij}$\,, $1\leq i, j\leq d$ are uniquely determined from (\ref{eq345456}). Since, moreover, $u=u^*$\,, we deduce that
\begin{eqnarray}\label{eq345458}
v_{ij}^*=v_{ji}\,, \quad 1\leq i, j\leq d\,.
\end{eqnarray}
Now, by condition $(i)$, we obtain the following set of relations for all $i\neq j\neq k\neq i$\,,
\begin{equation}\label{eq345459}
a_ia_ja_k^2=a_ia_k^2a_j=a_k^2a_ia_j\end{equation}
and, respectively, for all $i\neq j$\,,
\begin{equation}\label{eq8889990}
a_i^3a_j=a_ia_ja_i^2\,,\quad
a_ia_j^3=a_j^2a_ia_j\,, \quad
a_i^2a_j^2=a_ia_j^2a_i=a_j^2a_i^2=a_ja_i^2a_j\,.
\end{equation}
These conditions imply that every matrix of the form $a_i a_j a_k a_l$\,, $1\leq i, j, k, l\leq d$ occurs precisely once the set $B=\cup_{i=1}^6 B_i$\,.
Moreover, two elements of the form $b=a_i a_j a_k a_l$\,, $b^\prime=a_{i^\prime} a_{j^\prime} a_{k^\prime} a_{l^\prime}$\,, where $(i, j, k, l)\neq (i^\prime, j^\prime, k^\prime, l^\prime)$ are equal if and only if
one of the four cases listed in (\ref{eq345459}) and (\ref{eq8889990}) holds.

Furthermore, since $u^2=u^*u=1_{M_n(N)}$\,, we have
\begin{eqnarray}\label{eq889991}
1_{M_n(N)}=\sum\limits_{i, j, k, l}^d a_i a_j a_k a_l \otimes v_{ij} v_{kl}\,.
\end{eqnarray}
By applying $\text{id}_{M_n(\mathbb{C})}\otimes \tau_N$ on both sides of (\ref{eq889991}), we get
$1_n=\sum_{i, j, k, l=1}^d \tau_N(v_{ij} v_{kl})a_i a_j a_k a_l$\,,
and therefore
$0_{M_n(N)}=\sum_{i, j, k, l=1}^d a_i a_j a_k a_l \otimes (v_{ij}v_{kl}-\tau_N(v_{ij}v_{kl})1_N)$\,.
By (\ref{eq345457}) and (\ref{eq345458}), this can further be reduced to
\begin{eqnarray}\label{eq67676765}
0_{M_n(N)}=\sum\limits_{i, j, k, l=1}^d a_i a_j a_k a_l \otimes (v_{ij}v_{kl}-\delta_{il}\delta_{jk}1_N)\,.
\end{eqnarray}
Using the remark following (\ref{eq8889990}), the equation (\ref{eq67676765}) can be rewritten as
$0_{M_n(N)}=\sum_{b\in B} b\otimes w_b$\,,
where $w_b\in N$\,. Since $B$ is a linearly independent set, this implies that $w_b=0_N$\,, for all $b\in B$\,. Hence,
if $b\in B_1$\,, i.e., $b=a_i a_j a_k a_l$\,, where $i\neq j\neq k\neq l$\,, we infer that $0_N=w_b=v_{ij}v_{kl}-\delta_{il}\delta_{jk}1_N$\,, which implies that
\begin{eqnarray}\label{eq4444}
v_{ij}v_{kl}=0_N\,, \quad i\neq j\neq k\neq l\,.
\end{eqnarray}
Similarly, if $b\in B_6$\,, i.e., $b=a_i^4$\,, for some $1\leq i\leq d$\,, then the same argument applies, and we obtain $0_N=w_b=v_{ii}^2-\delta_{ii}^21_N$\,, i.e.,
\begin{eqnarray}\label{eq44444}
v_{ii}^2=1_N\,, \quad 1\leq i\leq d\,.
\end{eqnarray}
On the other hand, if $b\in B_2$\,, then by (\ref{eq8889990}) it follows that $b=a_i a_j a_k^2=a_i a_k^2 a_j=a_i a_j a_k^2$\,, for some $1\leq i\neq j\neq k\neq i\leq d$\,, and therefore
$w_b=v_{ij}v_{kk}^2+v_{ik}v_{kj}+v_{kk}^2v_{ij}-(\delta_{ik}\delta_{jk}+\delta_{ij}\delta_{kk}+\delta_{kj}\delta_{ki})1_N$\,. Hence
\begin{eqnarray}\label{eq44445}
v_{ij}v_{kk}^2+v_{ik}v_{kj}+v_{kk}^2v_{ij}=0_N\,, \quad 1\leq i\neq j\neq k\neq i\leq d\,.
\end{eqnarray}
Similarly, using $w_b=0_N$ for all $b\in B_m$\,, where $3\leq m\leq 5$\,, we obtain that the following relations hold for all $1\leq i\neq j\leq d$:
\begin{eqnarray}
v_{ii}v_{ij}+v_{ij}v_{ii}&=&0_N\label{eq444455}\\
v_{ij}v_{jj}+v_{jj}v_{ij}&=&0_N\label{eq444456}\\
v_{ii}v_{jj}+v_{ij}v_{ji}+v_{jj}v_{ii}+v_{ji}v_{ij}&=&2 1_N\,.\label{eq444457}
\end{eqnarray}
Now, recall that $v_{ij}^*=v_{ji}$\,, $1\leq i, j\leq d$, so by (\ref{eq44444}), we deduce that $\{v_{ii}\,, 1\leq i\leq d\}$ is a set of self-adjoint unitaries. Thus, by (\ref{eq444457}) we have \[ \|v_{ij} v_{ij}^*+v_{ij}^*v_{ij}\|=\|2 1_N-v_{ii} v_{jj}-v_{jj}v_{ii}\|\leq 4\,, \quad 1\leq i\neq j\leq d\,, \]
which implies that $\|v_{ij}\|\leq 2$\,, for all $1\leq i\neq j\leq d$\,.
Now, for every $1\leq j\leq d$\, set $p_j\colon =\bigvee_{i\neq j} s(v_{ij}^*v_{ij})$\,,
where $s(v_{ij}^*v_{ij})$ denotes the support projection of $v_{ij}^*v_{ij}$\,.
By (\ref{eq4444}) it follows that $p_j$ and $p_k$ are orthogonal projections, whenever $1\leq j\neq k\leq d$\,, and hence
\begin{eqnarray}\label{eq444555666}
\sum\limits_{j=1}^d \tau_N(p_j)\leq \tau_N(1)=1\,.
\end{eqnarray}
On the other hand, by (\ref{eq345457}), $\tau_N(v_{ij}^*v_{ij})=1$\,, for all $1\leq i, j\leq d$\,. Moreover, for $i\neq j$\,,
\[ v_{ij}^*v_{ij}\leq \|v_{ij}^*v_{ij}\|p_j\leq 4 p_j\,. \]
Thus $\tau_N(p_j)\geq ({\tau_N}(v_{ij}^*v_{ij}))/4={{1}/{4}}$\,, for all $1\leq i\neq j\leq d$\,.
This implies that $\sum_{j=1}^d \tau_N(p_j)\geq {d}/{4}$\,,
and since $d\geq 5$\,, this contradicts (\ref{eq444555666}). The proof is complete.
\end{proof}

The condition $d\geq 5$ is essential in the statement of Lemma \ref{lem6767} above, as it can be seen from the following remark.

\begin{rem}\rm
Assume that $T: M_n(\mathbb{C})\rightarrow M_n(\mathbb{C})$ is of the form (\ref{eq22233332222}), where $a_1\,, \ldots\,, a_d\in M_n(\mathbb{C})$ are self-adjoint with $\sum_{i=1}^d a_i^2=1_n$\,. If $d\leq 4$\,, then $T^2$ is factorizable.
\end{rem}

\begin{proof} We can assume without loss of generality that $d=4$ (otherwise add zero-terms). Set
\[ u\colon =\sum\limits_{i, j=1}^4 a_i a_j\otimes (2 e_{ij}-\delta_{ij}1_4)\,, \] where $(e_{ij})_{1\leq i, j\leq 4}$ are the standard matrix units in $M_4(\mathbb{C})$\,.
Then $u=u^*\in M_n(\mathbb{C})\otimes M_4(\mathbb{C})=M_{4n}(\mathbb{C})$\,. We first show that $u$ is a unitary. We have
\begin{eqnarray*}
u^*u \,\, = \,\, uu^* \,\, =u^2&=&\sum\limits_{i, j, k, l=1}^4 a_i a_j a_k a_l \otimes (2e_{ij}-\delta_{ij}1_4)(2 e_{kl}-\delta_{kl}1_4)\\
&=&\sum\limits_{i, j, k, l=1}^4 a_i a_j a_k a_l \otimes (4\delta_{jk}e_{il}-2\delta_{ij}e_{kl}-2\delta_{kl}e_{ij}+\delta_{ij}\delta_{kl}1_4)\\
&=& 4s_1-2s_2-2s_3+s_4\,,
\end{eqnarray*}
where $s_1\colon =\sum_{i,j,l=1}^4 a_i a_j^2 a_l\otimes e_{il}=\sum_{i,l=1}^4 a_i a_l\otimes e_{il}$\,, $s_2\colon =\sum_{i, k, l=1}^4 a_i^2a_ka_l\otimes e_{kl}=\sum_{k, l=1}^4 a_ka_l\otimes e_{kl}$\,, $s_3\colon =\sum_{i, j, k=1}^4 a_ia_ja_k^2\otimes e_{ij}=\sum_{i, j=1}^4 a_ia_j\otimes e_{ij}$ and $s_4\colon =\sum_{i,j=1}^4 a_i^2a_j^2\otimes 1_4=1_{4n}$\,. We have repeatedly used the fact that $\sum_{i=1}^d a_i^2=1_n$\,. Hence $s_1=s_2=s_3$\,, and therefore
\[ u^*u=uu^*=s_4=1_{4n}\,. \]
Next we prove that $T^2$ is factorizable by showing that
\[ \mathbb{E}_{M_n(\mathbb{C})\otimes {1_4}}(u^*(x\otimes 1_4)u)=T^2x\,, \quad x\in M_n(\mathbb{C})\,. \]
Since $u=u^*$\,, the left hand side above becomes
\begin{eqnarray*}
 \mathbb{E}_{M_n(\mathbb{C})\otimes {1_4}}(u^*(x\otimes 1_4)u)&=&\mathbb{E}_{M_n(\mathbb{C})\otimes {1_4}}\left(\sum\limits_{i, j, k, l=1}^4 a_i a_j\otimes (2 e_{ij}-\delta_{ij}1_4)(x\otimes 1_4)(a_k a_l\otimes (2e_{kl}-\delta_{kl}1_4))\right)\\
&=& \sum\limits_{i, j, k, l=1}^4 a_i a_jxa_ka_l \tau_4((2e_{ij}-\delta_{ij}1)(2e_{kl}-\delta_{kl}I))\\
&=& \sum\limits_{i, j=1}^4 a_ia_jxa_ja_i\\&= &T^2x\,,
\end{eqnarray*}
 wherein we have used that fact that $\tau_4((2e_{ij}-\delta_{ij}1_4)(2e_{kl}-\delta_{kl}1_4))=({4}/{4}) \delta_{il}\delta_{jk}-({2}/{4})\delta_{ij}\delta_{kl}-({2}/{4})\delta_{ij}\delta_{kl}+\delta_{ij}\delta_{kl}=\delta_{il}\delta_{jk}$\,.
The proof is complete.
\end{proof}

\begin{lemma}\label{lem77887788}
Let $d\geq 5$\,, $b_1\,, \ldots\,, b_d$ be self-adjoint matrices in $M_m(\mathbb{C})$\,, and $u_1\,, \ldots\,, u_d$ be self-adjoint unitary matrices in $M_r(\mathbb{C})$\,, where $m$ and $r$ are positive integers. Assume that
\begin{enumerate}
\item [$(a)$] $\sum_{i=1}^d b_i^2=1_m$\,.
\item [$(b)$] $b_ib_j=b_jb_i$\,, for all $1\leq i, j\leq d$\,.
\item [$(c)$] $b_ib_jb_kb_l\neq 0_m$\,, for all $1\leq i, j, k, l\leq d$\,.
\item [$(d)$] For every $1\leq i\neq j\leq d$\,, the set $\{b_ib_jb_k^2: 1\leq k\leq d\}$ is linearly independent in $M_m(\mathbb{C})$\,.
\item [$(e)$] The set $\{b_i^2b_j^2: 1\leq i< j\leq d\}$ is linearly independent in $M_m(\mathbb{C})$\,.
\item [$(f)$] The set $\{1_r\}\cup \{u_iu_j: 1\leq i\neq j\leq d\}\cup \{u_iu_ju_ku_l: 1\leq i\neq j\neq k\neq l\leq d\}$ is linearly independent in $M_r(\mathbb{C})$\,.
\end{enumerate}
Then $a_i\colon= b_i\otimes u_i$\,, $1\leq i\leq d$ are self-adjoint matrices in $M_{mr}(\mathbb{C})=M_m(\mathbb{C})\otimes M_r(\mathbb{C})$ which satisfy $\sum_{i=1}^d a_i^2=1_{mr}$\,, as well as the conditions $(i)-(iv)$ in Lemma \ref{lem6767} with $n=mr$\,.
\end{lemma}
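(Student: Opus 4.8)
The plan is to verify the conclusion by exploiting the product structure $a_i=b_i\otimes u_i$ together with the relations $u_i^2=1_r$ (so $u_i^3=u_i$). The trivial points come first: $a_i$ is self-adjoint since $b_i,u_i$ are, and $a_i^2=b_i^2\otimes u_i^2=b_i^2\otimes1_r$, whence $\sum_{i=1}^d a_i^2=\big(\sum_i b_i^2\big)\otimes1_r=1_{mr}$ by $(a)$; condition $(i)$ of Lemma~\ref{lem6767} holds since $a_i^2a_j=b_i^2b_j\otimes u_j=b_jb_i^2\otimes u_j=a_ja_i^2$ by $(b)$; and $(iv)$ is the standing hypothesis $d\ge5$. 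The key mechanism is that every product factors as $a_{i_1}\cdots a_{i_p}=(b_{i_1}\cdots b_{i_p})\otimes(u_{i_1}\cdots u_{i_p})$, and, after cancelling squares $u_i^2=1_r$ in the second factor, the $M_r$-part of every element occurring in $A=\{a_ia_j\}$ or in $B=\bigcup_{i=1}^6 B_i$ is one of the words $1_r$, $u_iu_j$ $(i\ne j)$, or $u_iu_ju_ku_l$ $(i\ne j\ne k\ne l)$, which are linearly independent in $M_r$ by $(f)$. Since $\sum_\alpha x_\alpha\otimes y_\alpha=0$ with $\{y_\alpha\}$ linearly independent forces every $x_\alpha=0$, any linear relation among the elements of $A$ or $B$ splits into separate relations among the $M_m$-factors that carry a common $M_r$-word, reducing everything to linear independence statements inside $M_m$.

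Next comes the bookkeeping. I would first record that $\{b_i^2\}_{i=1}^d$ is linearly independent (if $\sum_kc_kb_k^2=0$, multiply by $b_1b_2$ and use $(d)$) and that $b_ib_j\ne0$ for all $i,j$ (by $(c)$). Splitting a relation among $\{a_ia_j\}$ by $M_r$-word yields $\sum_ic_{ii}b_i^2=0$ (from $1_r$) and $c_{ij}b_ib_j=0$ for $i\ne j$ (from $u_iu_j$), so all $c_{ij}=0$; this is $(ii)$. In $B$, the elements whose $M_r$-word is a reduced $4$-word all lie in $B_1$, each word occurring for a unique tuple by $(f)$, with nonzero $M_m$-part $b_ib_jb_kb_l$ by $(c)$. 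For a fixed pair $i\ne j$, the elements of $B$ with $M_r$-word $u_iu_j$ are exactly those of $B_2\cup B_3\cup B_4$ carrying that word: since $a_i^3a_j=b_ib_jb_i^2\otimes u_iu_j$, $a_ia_j^3=b_ib_jb_j^2\otimes u_iu_j$ and $a_ia_ja_k^2=b_ib_jb_k^2\otimes u_iu_j$ $(k\ne i,j)$, their $M_m$-parts are precisely $\{b_ib_jb_k^2:1\le k\le d\}$, linearly independent by $(d)$. Finally the elements of $B$ with $M_r$-word $1_r$ are exactly those of $B_5\cup B_6$, with $M_m$-parts $\{b_i^2b_j^2:i<j\}\cup\{b_k^4:k\}$. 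By $(f)$ the three word-types are mutually distinct, and the members of the first two blocks are distinct by $(f)$ and $(d)$; so once we know the $1_r$-block is linearly independent, $B$ is the disjoint union of the $B_i$ with distinct elements and $(iii)$ follows. Thus the whole proof comes down to the claim
\[(\star)\qquad \{b_i^2b_j^2:1\le i<j\le d\}\cup\{b_k^4:1\le k\le d\}\ \text{is linearly independent in }M_m(\mathbb{C}).\]

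To attack $(\star)$ I would first use $(a)$ to write $b_k^4=b_k^2-\sum_{l\ne k}b_k^2b_l^2$ and conversely $b_k^2=b_k^4+\sum_{l\ne k}b_k^2b_l^2$; hence $\{b_i^2b_j^2\}_{i<j}\cup\{b_k^4\}_k$ and $\{b_i^2b_j^2\}_{i<j}\cup\{b_k^2\}_k$ span the same subspace of $M_m$, and as these families have equal cardinality with $\{b_i^2b_j^2\}_{i<j}$ (by $(e)$) and $\{b_k^2\}_k$ each linearly independent, $(\star)$ is equivalent to $\operatorname{span}\{b_k^2:k\}\cap\operatorname{span}\{b_i^2b_j^2:i<j\}=\{0\}$. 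I would then pass to the finite-dimensional commutative $*$-algebra generated by $b_1,\dots,b_d$ and argue on its joint spectrum, where each $b_i$ becomes a function $z_i\ge0$ with $\sum_i z_i\equiv1$ by $(a)$: there $(d)$ says the coordinate functions are linearly independent on each subset $\{z_i>0\}\cap\{z_j>0\}$ of the spectrum, $(e)$ says the products $z_iz_j$ $(i<j)$ are linearly independent, and $(c)$ supplies spectrum points on which prescribed coordinates are positive. One has to rule out a nonzero affine combination of the $z_i$ that agrees on the spectrum with a combination of the products $z_iz_j$ $(i<j)$, and this is the place where $(c)$, $(d)$, $(e)$ must be combined and where $d\ge5$ enters. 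I expect this last step — proving $(\star)$, equivalently separating the spans of the ``diagonal'' elements $b_k^2$ (or $b_k^4$) from the ``off-diagonal'' elements $b_i^2b_j^2$ — to be the main obstacle; the rest is routine once the tensor splitting furnished by $(f)$ is in hand.
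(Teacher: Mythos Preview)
Your tensor–splitting strategy via $(f)$ is exactly the paper's argument: once the $M_r$-word of each element of $A$ and of $B=\bigcup_i B_i$ is identified, the linear independence assumed in $(f)$ reduces conditions $(ii)$ and $(iii)$ of Lemma~\ref{lem6767} to the three $M_m$-statements you isolate, and the paper disposes of them using $(c)$, $(d)$ and $(e)$ in precisely the way you do. So the route is the same.

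The difficulty you flag at $(\star)$ is, however, a real gap if one reads condition $(e)$ literally as $1\le i<j\le d$. In fact $(\star)$ \emph{cannot} be derived from $(a)$--$(e)$ in that reading: for $d=5$, restrict the coordinate functions $\phi_1,\dots,\phi_5$ on $S^4$ to a generic set of $14$ points with no zero coordinate. Conditions $(a)$--$(c)$ are immediate, while $(d)$ (five vectors per pair) and $(e)$ (ten vectors) hold generically because they hold on the full sphere; but $(\star)$ asks for $15$ linearly independent vectors in $\mathbb{R}^{14}$, which is impossible. So your proposed attack on $(\star)$ via the joint spectrum cannot succeed, and $d\ge 5$ plays no role there (that hypothesis is only condition $(iv)$, used in Lemma~\ref{lem6767} for the trace estimate on the projections $p_j$, not here).

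What is really going on is a typo in the statement of the lemma: $(e)$ is meant to read $1\le i\le j\le d$, so that $(\star)$ \emph{is} $(e)$. Two pieces of internal evidence make this unambiguous: the paper's own proof of the lemma says ``$(e)$ implies $(3)$'' where $(3)$ is exactly your $(\star)$; and Remark~\ref{remat78} counts ``the set in $(e)$'' as having $15$ elements for $d=5$, which matches $\binom{5}{2}+5=15$, not $\binom{5}{2}=10$. With $(e)$ corrected to $i\le j$, your proof is complete and coincides with the paper's.
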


\begin{proof}
Note first that conditions $(a)$ and $(b)$\,, together with the fact that $u_i^2=1_r$\,, for all $1\leq i\leq d$\,, imply that $\sum_{j=1}^d a_j^2=1_{mr}$ and $a_i^2a_j=a_ja_i^2$\,, $1\leq i, j\leq d$\,, i.e., condition $(i)$ in Lemma \ref{lem6767} is satisfied. Further,
the set $A\colon=\{a_ia_j: 1\leq i, j\leq d\}$ is equal to
\[ \{b_i^2\otimes 1_r: 1\leq i\leq d\}\cup \{b_ib_j\otimes u_iu_j: 1\leq i\neq j\leq d\}\,. \]
By $(e)$\,, the set $\{1_r\}\cup \{u_iu_j:1\leq i\neq j\leq d\}$ is linearly independent. Hence $A$ is linearly independent if and only if $b_ib_j\neq 0_m$\,, whenever $1\leq i\neq j\leq d$\,. The linear independence of $b_1^2\,, \ldots\,, b_d^2$ follows from $(d)$\,, and by $(c)$ we get that $b_ib_j\neq 0_m$\,, for all $1\leq i, j\leq d$\,. This proves condition $(ii)$ in Lemma \ref{lem6767}.

Next, consider the set $B\colon =B_1\cup \ldots \cup B_6$\,, where $B_1\,, \ldots \,,B_6$ are defined as in $(iii)$ in the above mentioned lemma. Since $u_i^2=1_r$\,, for all $1\leq i\leq d$\,, the sets $B_1\,, \ldots\,, B_6$ can be rewritten as: $B_1=\{b_ib_jb_kb_l\otimes u_iu_ju_ku_l: 1\leq i\neq j\neq k\neq l\leq d\}$\,, $B_2=\{b_ib_jb_k^2\otimes u_iu_j: 1\leq i\neq j\neq k\neq i\leq d\}$\,, $B_3=\{b_i^3b_j\otimes u_iu_j:1\leq i\neq j\leq d\}$\,, $B_4=\{b_ib_j^3\otimes u_iu_j: 1\leq i\neq j\leq d\}$\,, $B_5=\{b_i^2b_j^2\otimes 1_l: 1\leq i< j\leq d\}$ and $B_6=\{b_i^4\otimes 1_l: 1\leq i\leq d\}$\,. By $(e)$\,, $B$ is a linearly independent set if and only if the following three conditions hold:
\begin{enumerate}
\item [$(1)$] $b_i b_j b_k b_l\neq 0_m$\,, whenever $1\leq i\neq j\neq k\neq l\leq d$\,.
\item [$(2)$] For every $1\leq i\neq j\leq d$\,, the set $\{b_ib_jb_k^2: 1\leq k\leq d, k\neq i\,, k\neq j\}\cup \{b_i^3b_j: 1\leq i, j\leq d\}\cup \{b_ib_j^3: 1\leq i, j\leq d\}$ is linearly independent.
\item [$(3)$] The set $\{b_i^2b_j^2: 1\leq i< j\leq d\}\cup \{b_i^4: 1\leq i\leq d\}$ is linearly independent.
\end{enumerate}
Clearly, $(c)$ implies $(1)$\,, $(e)$ implies $(3)$\,, and by $(b)$\,, condition $(d)$ implies $(2)$\,. Hence $(iii)$ in Lemma \ref{lem6767} holds, and since $d\geq 5$\,, condition $(iv)$ holds, as well, thus completing the proof.
\end{proof}

{\bf Proof of Theorem \ref{th5656}}: It remains to be proved that for $d\geq 5$\,, there exist positive integers $m, r$ and matrices $b_1\,, \ldots\,, b_d\in M_m(\mathbb{C})$\,, $u_1\,, \ldots\,, u_d\in M_r(\mathbb{C})$ satisfying the hypotheses of Lemma \ref{lem77887788}.

Let $S^{d-1}=S(\mathbb{R}^d)$ denote the unit sphere in $\mathbb{R}^d$\,, and let $\phi_1\,, \ldots\,, \phi_d:S^{d-1}\rightarrow \mathbb{R}$ be the coordinate functions. It is not difficult to check that these functions in $C(S^{d-1})$ satisfy conditions $(a)-(e)$ in Lemma \ref{lem77887788}. Conditions $(a)-(c)$ are, indeed, obvious. To prove $(e)$\,, note that $\phi_d^2=1-\phi_1^2-\ldots -\phi_{d-1}^2$\,. Hence, the linear independence of the set $\{\phi_i^2\phi_j^2:1\leq i< j\leq d\}$ is equivalent to the linear independence of the set of polynomials
\[ {\mathcal P}:=\{x_i^2x_j^2: 1\leq i\leq j\leq d-1\}\cup \{x_i^2: 1\leq i\leq d-1\}\cup \{1\} \]
in $C(B(\mathbb{R}^{d-1}))$\,, where $B(\mathbb{R}^{d-1})$ is the closed unit ball in $\mathbb{R}^{d-1}$\,. But ${\mathcal P}$ is clearly a linearly independent set, because if a polynomial in $\mathbb{R}^{d-1}$ vanishes in a neighborhood of 0, then all its coefficients are 0. This shows that $\phi_1\,, \ldots\,, \phi_d$ satisfy $(e)$\,. The same method gives that $\{\phi_1^2\,, \ldots\,, \phi_d^2\}$ is a linearly independent set, and since for $1\leq i\neq j\leq d$\,, the set $\{x\in S^{d-1}: \phi_i(x)\phi_j(x)\neq 0\}$ is dense in $S^{d-1}$\,, it follows that also condition $(d)$ holds for $\phi_1\,, \ldots\,, \phi_d$\,.

Next we show that $(a)-(d)$ hold for the restriction of $(\phi_1\,, \ldots\,, \phi_d)$ to some finite subset of $S^{d-1}$\,. For this, assume that $(e)$ fails for the restriction of $(\phi_1\,, \ldots\,, \phi_d)$ to any finite subset $F$ of $S^{d-1}$\,. Then, for each such $F$\,, we can find coefficients $c_{ij}^F$\,, $1\leq i\leq j\leq d$\,, not all equal to zero, such that
\[ \sum\limits_{1\leq i\leq j\leq d} c_{ij}^F \phi_i^2(x)\phi_j^2(x)=0\,, \quad x\in F\,. \]
Moreover, we can assume that $\sum_{1\leq i\leq j\leq d} |c_{ij}^F|^2=1$\,. Take now a weak$^*$-limit point $c=(c_{ij})_{1\leq i\leq j\leq d}$ of the net $((c_{ij}^F)_{1\leq i\leq j\leq d})_F$\,, where the finite subsets $F\subseteq S^{d-1}$ are ordered by inclusion. Then \[ \sum\limits_{1\leq i\leq j\leq d} c_{ij}\phi_i^2(x)\phi_j^2(x)=0\,, \quad x\in S^{d-1}\,, \]
and not all coefficients $c_{ij}$ above vanish. This contradicts the fact that $\phi_1\,, \ldots\,, \phi_d$ satisfy $(e)$\,. Using this type of argument, it is easy to see that one can choose a finite subset $F$ of $S^{d-1}$ such that not only $(e)$\,, but also $(d)$ and $(c)$ hold for the restrictions of $\phi_1\,, \ldots\,, \phi_d$ to $F$\,. Of course, conditions $(a)$ and $(b)$ also hold for these restrictions. Set now $m:=|F|$\,, and let $F=\{p_1\,, \ldots\,, p_m\}$\,. Then the diagonal matrices \[ b_i\colon =\text{diag}\{\phi_i(p_1)\,, \ldots\,, \phi_i(p_m)\}\,, \quad 1\leq i\leq d \]
in $M_m(\mathbb{C})$ satisfy $(a)-(d)$\,.

It remains to be proved that we can find self-adjoint unitaries $u_1\,, \ldots\,, u_d$ in some matrix algebra $M_r(\mathbb{C})$ satisfying $(f)$\,. For this, consider the free product group $G\colon =\mathbb{Z}_2\ast \ldots \ast\mathbb{Z}_2$ ($d$ copies), and let $g_1\,, \ldots\,, g_d$ be its generators. Then $g_i^2=1$\,, for all $1\leq i\leq d$ and $g_{i_1}g_{i_2}\ldots g_{i_s}\neq 0$\,, whenever $s$ is a positive integer and $i_1\neq i_2\neq \ldots \neq i_s$\,. Since $G$ is residually finite (cf. \cite{Gru}), by passing to a quotient of $G$ we can find a finite group $\Gamma$ generated by $\gamma_1\,, \ldots\,, \gamma_d$ such that $\gamma_i^2=1$\,, for all $1\leq i\leq d$\,, and $\gamma_{i_1}\gamma_{i_2}\ldots \gamma_{i_s}\neq 0$\,, whenever $1\leq s\leq d$ and $i_1\neq i_2\neq \ldots \neq i_s$\,. This implies that the group elements listed in the set $\{1\}\cup \{\gamma_i\gamma_j: 1\leq i\neq j\leq d\}\cup \{\gamma_{i}\gamma_j\gamma_k\gamma_l: 1\leq i\neq j\neq k\neq l\leq d\}$ are all distinct. Set now $r\colon =|\Gamma|$ and let $u_1\,, \ldots \,, u_d$ be the ranges of $\gamma_1\,, \ldots\,, \gamma_d$ by the left regular representation $\lambda_\Gamma$ of $\Gamma$ in ${\mathcal B}(l^2(\Gamma))\simeq M_r(\mathbb{C})$\,. Then $u_1\,, \ldots\,, u_d$ are self-adjoint unitaries. Moreover, the set $\{\lambda_{\Gamma}(\gamma): \gamma\in \Gamma\}$ is linearly independent, because $\lambda_\Gamma(\gamma)\delta_e=\delta_{
\gamma}$\,, where $\delta_{\gamma}\in l^2(\Gamma)$ is defined by $\delta_{\gamma}({\gamma}^\prime)= 1$\,, if $\gamma^{\prime}=\gamma$ and $\delta_{\gamma}({\gamma}^\prime)= 0$\,, else.
Hence $u_1\,, \ldots \,, u_d$ satisfy $(f)$ and the proof is complete.\qed

\begin{rem}\label{remat78}\rm
The above proof does not provide explicit numbers $m$ and $r$ for which $b_1\,, \ldots \,, b_d$ and $u_1\,, \ldots\, u_d$ can be realized, but it is easy to find lower bounds. For $d=5$\,, the set in $(e)$ has 15 linearly independent elements, and since the $b_i$'s can be simultaneously diagonalized, it follows that $m\geq 15$\,. Also, for $d=5$\,, the set in $(f)$ has $1+5\times 4+5\times 4^3=341$ linearly independent elements in $M_r(\mathbb{C})$\,. Hence $r^2\geq 341$\,, which implies that $r\geq 19$. Altogether, we conclude that $n\colon =mr\geq 15\times 19=285$\,.
\end{rem}

We will end this section by giving a characterization of those $\tau_M$-Markov maps $S:M\rightarrow M$ which admit a Rota dilation.

\begin{theorem}\label{th666666}
Let $M$ be a finite von Neumann algebra with normal, faithful, tracial state $\tau_M$\,, and let $S\colon M\rightarrow M$ be a linear operator.
Then the following statements are equivalent:
\begin{enumerate}
\item [$(i)$] $S$ satisfies the Rota dilation property.
\item [$(ii)$] $S=T^*T$\,, for some factorizable $(\tau_M\,, \tau_N)$-Markov map $T:M\rightarrow N$ taking values in a von Neumann algebra $N$ with a normal, faithful, tracial state $\tau_N$\,.
\end{enumerate}
\end{theorem}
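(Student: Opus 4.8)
The plan is to prove the two implications separately; the forward one is immediate, while essentially all the work lies in $(ii)\Rightarrow(i)$. For $(i)\Rightarrow(ii)$: given a Rota dilation of $S$ with data $(N,\tau_N,\pi,(N_m)_{m\geq1},Q)$ as in Definition \ref{defrotadil}, I would set $T:=\mathbb{E}_1\circ\pi\colon M\to N_1$, with $N_1$ carrying the restriction of $\tau_N$. Then $T$ is completely positive, unital and trace-preserving, hence a $(\tau_M,\tau_{N_1})$-Markov map; writing $\iota_1\colon N_1\hookrightarrow N$ for the inclusion, $\pi\colon M\to N$ and $\iota_1\colon N_1\to N$ are Markov $*$-monomorphisms with $T=\iota_1^*\circ\pi$, so $T$ is factorizable; and since $\iota_1\circ\iota_1^*=\mathbb{E}_1$ as an operator on $N$ and $Q=\pi^*$, one gets $T^*\circ T=\pi^*\circ\iota_1\circ\iota_1^*\circ\pi=Q\circ\mathbb{E}_1\circ\pi=S$.

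For $(ii)\Rightarrow(i)$, assume $S=T^*T$ with $T\colon M\to N$ a factorizable $(\tau_M,\tau_N)$-Markov map. By Remark \ref{rem787877777}$(a)$ I may factor $T$ through a finite von Neumann algebra $P$ with a faithful normal tracial state $\tau_P$: there are trace-preserving $*$-monomorphisms $\alpha\colon M\to P$, $\beta\colon N\to P$ with $T=\beta^*\circ\alpha$, so that, by Remark \ref{rem78787777754}, $\beta^{-1}\circ\mathbb{E}_{\beta(N)}\circ\alpha=T$ and $\alpha^{-1}\circ\mathbb{E}_{\alpha(M)}\circ\beta=T^*$, where $\mathbb{E}_B$ denotes the $\tau_P$-preserving conditional expectation onto a subalgebra $B\subseteq P$. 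I would then realize a noncommutative Markov chain alternating between copies of $M$ and of $N$: for each $k\geq1$ take a copy $P^{(k)}$ of $P$, and mark inside it subalgebras $A_{k-1},A_k\subseteq P^{(k)}$ so that, under the identification $P^{(k)}\cong P$, $(A_{k-1},A_k)=(\alpha(M),\beta(N))$ when $k$ is odd and $(A_{k-1},A_k)=(\beta(N),\alpha(M))$ when $k$ is even — thus $A_0\cong M$, $A_1\cong N$, $A_2\cong M,\ldots$, and $P^{(k)}$ shares the subalgebra $A_k$ with $P^{(k+1)}$. Let $\mathcal{N}$ be the one-sided iterated amalgamated free product $P^{(1)}*_{A_1}P^{(2)}*_{A_2}P^{(3)}*_{A_3}\cdots$, which exists as the von Neumann algebra inductive limit (Lemma \ref{lem7000000000000}, the condition (\ref{eq4000000000000004}) being vacuous as all states here are tracial) of the finite amalgamated free products $P^{(1)}*_{A_1}\cdots*_{A_{n-1}}P^{(n)}$; it is a finite von Neumann algebra with a faithful normal tracial state $\tau_{\mathcal N}$. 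Finally let $\pi\colon M\to\mathcal{N}$ be the identification $M\cong A_0$ followed by the inclusion $A_0\hookrightarrow\mathcal{N}$, set $\mathcal{N}_m:=\big(\bigcup_{j\geq m}A_j\big)^{\prime\prime}$ for $m\geq1$ (a decreasing sequence of subalgebras), $\mathbb{E}_m:=\mathbb{E}_{\mathcal{N}_m}$, and $Q:=\pi^*$.

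To check $S^m=Q\circ\mathbb{E}_m\circ\pi$ for $m\geq1$: since $\mathbb{E}_m$ is a self-adjoint idempotent on $L^2(\mathcal{N},\tau_{\mathcal N})$, one has $Q\circ\mathbb{E}_m\circ\pi=(\mathbb{E}_m\circ\pi)^*\circ(\mathbb{E}_m\circ\pi)$, so it is enough to identify $\mathbb{E}_m\circ\pi$. Writing $\mathcal{N}=\big(P^{(1)}*_{A_1}\cdots*_{A_{m-1}}P^{(m)}\big)*_{A_m}\big(P^{(m+1)}*_{A_{m+1}}\cdots\big)$, the Markov property of amalgamated free products (freeness of the left factor with the right one over $A_m$) gives $\mathbb{E}_m\circ\pi=\mathbb{E}_{A_m}\circ\pi$; iterating the same property inside the left factor, $\mathbb{E}_{A_m}\circ\pi$ equals, under the canonical identifications, the composite of the $m$ one-step conditional expectations $\mathbb{E}_{A_k}|_{A_{k-1}}$ ($1\leq k\leq m$), each of which — computed inside $P^{(k)}$ — is $T$ for $k$ odd and $T^*$ for $k$ even. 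Grouping these into consecutive pairs $T^*\circ T=S$, the product telescopes: $\mathbb{E}_m\circ\pi$ identifies with $S^{m/2}$ for $m$ even and with $T\circ S^{(m-1)/2}$ for $m$ odd, and in both cases $(\mathbb{E}_m\circ\pi)^*\circ(\mathbb{E}_m\circ\pi)$ equals $S^m$ (using $S^*=S$ and $T^*T=S$). Since $\pi$ is a trace-preserving unital faithful $*$-representation and $(\mathcal{N}_m)_{m\geq1}$ is decreasing, this is the desired Rota dilation.

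The main obstacle is the converse construction: arranging the alternating amalgamated free product so that consecutive marked subalgebras form the commuting squares realizing $T$ and $T^*$, and then invoking the Markov property of the infinite iterated amalgamated free product together with the parity bookkeeping accurately enough to see that $\mathbb{E}_{A_0}\circ\mathbb{E}_{\mathcal{N}_m}$, restricted to $A_0$, collapses to exactly $S^m$ rather than to some other power or to a non-telescoping word in $T$ and $T^*$. A subsidiary but real point — and the reason the tracial hypotheses on $\tau_M$ and $\tau_N$ are used — is that the amalgamated free products (equivalently, the inductive limit) must be well defined with a faithful normal trace.
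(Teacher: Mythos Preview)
Your argument is correct. The direction $(i)\Rightarrow(ii)$ coincides with the paper's (it is exactly the content of Remark~\ref{rem102}, equation~(\ref{eq345})). For $(ii)\Rightarrow(i)$ you take a genuinely different route.

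The paper proceeds by a symmetrization trick: from $T\colon M\to N$ it forms the self-adjoint factorizable Markov map $\widetilde{T}\colon M\oplus N\to M\oplus N$, $\widetilde{T}(x,y)=(T^*y,Tx)$, invokes Theorem~\ref{th6767} (itself resting on Anantharaman-Delaroche's Theorem~6.6 in \cite{AD}) to obtain a Rota dilation of $\widetilde{T}^{\,2}$ inside some $(Q,\tau_Q)$, and then shows that the projection $e=i(1_M,0_N)$ lies in every $Q_n$, so that cutting down to the corner $eQe$ yields a Rota dilation of $S=T^*T$. Your construction is instead a direct Markov-chain realization: you build the dilation algebra explicitly as an iterated amalgamated free product of copies of $P$ glued alternately along $\alpha(M)$ and $\beta(N)$, and read off $Q\circ\mathbb{E}_m\circ\pi=S^m$ from the freeness (Markov) property $\mathbb{E}_{B_2}|_{B_1}=\mathbb{E}_C|_{B_1}$ in $B_1*_CB_2$, iterated and paired according to parity. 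This is more self-contained---it avoids the detour through $M\oplus N$ and the corner argument, and does not cite \cite{AD} as a black box---at the price of assuming the standard existence and structural properties of tracial amalgamated free products (faithful trace, associativity of the iterated product, the Markov property). The paper's approach, by contrast, hides the iterated construction inside \cite{AD} and then undoes the symmetrization with the compression step; what it buys is that no free-probability machinery appears explicitly.
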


\begin{proof}
The implication $(i)\Rightarrow (ii)$ follows immediately from Remark \ref{rem102} (see (\ref{eq345}) therein). We now prove that $(ii)\Rightarrow (i)$.
Suppose that there exists a factorizable $(\tau_M\,, \tau_N)$-Markov map $T:M\rightarrow N$\,, where $N$ is a von Neumann algebra with a normal, faithful, tracial state $\tau_N$ such that
\[ S=T^*T\,. \]
Since $T$ is factorizable, it follows by Remark \ref{rem787877777} $(a)$ that there exists a finite von Neumann algebra $P$ with a normal, faithful tracial state $\tau_P$ such that
$T=\beta^*\circ \alpha$\,,
where $\alpha:M\rightarrow P$ and $\beta:N\rightarrow P$ are unital $*$-monomorphisms satisfying $\tau_M=\tau_P\circ \alpha$ and $\tau_N=\tau_P\circ \beta$\,. Consider now the von Neumann algebras $M\oplus N$ and $P\oplus P$\,, equipped with the normal, faithful, tracial states defined by $\tau_{M\oplus N}\colon =(\tau_M\oplus \tau_N)/2$ and $\tau_{P\oplus P}\colon =(\tau_P\oplus \tau_P)/2$\,, respectively. Further, define an operator $\tilde{T}$ on $M\oplus N$ by
\[ \tilde{T}(x, y)\colon =(T^*y, Tx)\,, \quad x\in M\,, y\in N\,. \]
Then $\tilde{T}$ is a $\tau_{M\oplus N}$-Markov map on $M\oplus N$ and ${\tilde{T}}^*=\tilde{T}$\,. Moreover, $\tilde{T}$ is factorizable, since $\tilde{T}=\delta^*\circ \gamma$\,, where $\delta\,, \gamma:M\oplus N\rightarrow P\oplus P$ are the $*$-monomorphisms given by
$\gamma(x, y)\colon =(\alpha(x), \beta(y))$\,, respectively, $\delta(x, y)\colon =(\beta(y), \alpha(x))$\,, $x\in M$\,, $y\in N$\,,
and $\tau_{M\oplus N}=(\tau_{P\oplus P})\circ \gamma=(\tau_{P\oplus P})\circ \delta$\,.

Hence, by Theorem \ref{th6767}, $\tilde{T}^2$ has a Rota dilation, i.e., there exists a finite von Neumann algebra $Q$ with a normal, faithful, tracial state $\tau_Q$, a unital $*$-monomorphism $i\colon M\oplus N\rightarrow Q$ for which $\tau_{M\oplus N}=\tau_Q\circ i$\,, and a decreasing sequence $(Q_n)_{n\geq 1}$ of von Neumann subalgebras of $Q$ such that
\[ \tilde{T}^{2n}=i^*\circ \mathbb{E}_{Q_n}\circ i\,, \quad n\geq 1\,, \]
where $\mathbb{E}_{Q_n}$ is the unique $\tau_Q$-preserving normal conditional expectation of $Q$ onto $Q_n$\,. Note that
\[ \tilde{T}^{2n}(x, y)=((T^*T)^nx\,, (TT^*)^ny)\,, \quad (x, y)\in {M\oplus N}\,. \]
In particular, $\tilde{T}^{2n}(1_M, 0_N)=(1_M\,, 0_N)$\,. Set $e\colon =i((1_M\,, 0_N))$\,.
Then $e$ is a projection in $Q$\,. We will show next that $e\in Q_n$\,, for all $n\geq 1$\,. For simplicity of notation, set $z\colon =(1_M, 0_N)$ and $w\colon =1_{M\oplus N}-z$\,. For all $n\geq 1$\,, $w^*\tilde{T}^{2n}(z)=w^*z=0_{M\oplus N}$\,, and therefore
\begin{eqnarray*}
0&=& \langle \tilde{T}^{2n} (z)\,, w\rangle_{L^2(M\oplus N)}\\
&=& \langle (i^*\circ {\mathbb{E}}_{Q_n} \circ i)(z)\,, w\rangle_{L^2(M\oplus N)}\\
&=& \langle {\mathbb{E}}_{Q_n} (i(z))\,, i(w)\rangle_{L^2(Q)}\\
&=&\tau_Q((1_Q-e){\mathbb{E}}_{Q_n}(e))\\
&=& \tau_Q((1_Q-e){\mathbb{E}}_{Q_n}(1_Q-e))\,.
\end{eqnarray*}
Since $\tau_Q$ is faithful and ${\mathbb{E}}_{Q_n}(e)\geq 0$\,, it follows that ${\mathbb{E}}_{Q_n}(e)\in e Q e$\,. Similarly, we obtain that ${\mathbb{E}}_{Q_n}(1-e)\in (1_Q-e)Q(1_Q-e)$\,. Since ${\mathbb{E}}_{Q_n}(e)-e=(1_Q-e)-{\mathbb{E}}_{Q_n}(1_Q-e)$\,, we deduce that \[ {\mathbb{E}}_{Q_n}(e)-e\in eQe\cap (1_Q-e)Q(1_Q-e)=\{0_Q\}\,, \quad n\geq 1\,, \]
which proves the claim.
Further, note that $\tau_Q(e)=\tau_{M\oplus N}((1_N\,, 0_N))=1/2$\,.
Set $R\colon =eQe$\,, $\tau_R\colon =2{(\tau_Q)_{\vert_R}}$\,, and for $x\in M$\,, let $j(x)\colon = i(x, 0_N)$\,. Then it is easy to check that $R$ is a von Neumann algebra with normal, faithful tracial state $\tau_R$ and that the map $j:M\rightarrow R$ above defined is a unital $*$-monomorphism for which $\tau_M=j\circ \tau_R$\,. Moreover, for all $n\geq 1$\,,
\[ (T^*T)^n=j^*\circ {\mathbb{E}}_{R_n}\circ j\,, \quad n\geq 1\,, \]
where $R_n\colon =eQ_n e$\,, $n\geq 1$\,, form a decreasing sequence of von Neumann subalgebras of $R$\,, and ${\mathbb{E}}_{R_n}\colon =({\mathbb{E}}_{Q_n})_{\vert_{R}}$ is the unique $\tau_R$-preserving conditional expectation of $R$ onto $R_n$\,. It follows by the definition that $S=T^*T$ has a Rota dilation.
\end{proof}

Note that from the proof of Theorem \ref{th666666} it follows right-away that in order for a linear map $T\colon M\rightarrow M$ to satisfy the Rota dilation property, it suffices that it satisfies the conditions set forth in Definition \ref{defrotadil} for $m=1$, only.

\section{On the asymptotic quantum Birkhoff conjecture}

In 1946 G. Birkhoff \cite{Bir} proved that every doubly stochastic matrix is a convex combination of permutation matrices.
Note that if one considers the abelian von Neumann algebra $D\colon =l^\infty(\{1, 2, \ldots\,, n\})$ with trace given by $\tau(\{i\})=1/n$\,, $1\leq i\leq n$\,, then the positive unital trace-preserving maps on $D$ are those linear operators on $D$ which are given by doubly stochastic $n\times n$ matrices. Since every automorphism of $D$ is given by a permutation of $\{1\,, 2\,, \ldots\,, n\}$, this led naturally to the question whether Birkhoff's classical result extends to the quantum setting. The statement that
every completely positive, unital trace-preserving map $T\colon (M_n(\mathbb{C}), \tau_n)\rightarrow (M_n(\mathbb{C}), \tau_n)$ lies in $\text{conv}(\text{Aut}(M_n(\mathbb{C})))$ turned out to be false for $n\geq 3$\,. For the case $n\geq 4$, this was shown by K\"ummerer and Maasen (cf. \cite{KM}), while the case $n=3$ was settled by K\"ummerer in \cite{Ku1} (see Remark \ref{rem8080}). In \cite{LS}, Landau and Streater gave a more elementary proof of K\"ummerer and Maasen's result, and also constructed another counterexample to the quantum Birkhoff conjecture in the case $n=3$\,.

Recently, V. Paulsen brought to our attention the following asymptotic version of the quantum Birkhoff conjecture, listed as Problem 30 on R. Werner's web page of open problems in quantum information theory (see \cite{We}):

{\bf The asymptotic quantum Birkhoff conjecture}: Let $n\geq 1$\,. If $T\colon M_n(\mathbb{C})\rightarrow M_n(\mathbb{C})$ is a $\tau_n$-Markov map, then $T$ satisfies the following {\em asymptotic quantum Birkhoff property}:
\begin{eqnarray}\label{eq5}\lim\limits_{k\rightarrow \infty} d_{\text{cb}}\bigg({\textstyle{\bigotimes\limits_{i=1}^k}} \,T\,, \text{conv}(\text{Aut}({\textstyle{\bigotimes\limits_{i=1}^k}} M_n(\mathbb{C})))\bigg)=0\,. \end{eqnarray}

As mentioned in the introduction, this conjecture originates in joint work of A. Winter, J. A. Smolin and
F. Verstraete. The main results obtained in \cite{SVW} motivated its formulation. We would like to thank M.-B. Ruskai for kindly providing us with the Report of the workshop on Operator structures in quantum information theory that took place at BIRS, February 11-16, 2007, where A. Winter discussed the conjecture, as well as for pointing out related very recent work of C. Mendl and M. Wolf (cf. \cite{MWo}).

Using the existence of non-factorizable Markov maps, we prove the following:

\begin{theorem}\label{thasybirkh}
For every $n\geq 3$\,, there exist $\tau_n$-Markov maps on $M_n(\mathbb{C})$ which do not satisfy the asymptotic quantum Birkhoff property (\ref{eq5}).
\end{theorem}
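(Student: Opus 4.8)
The plan is to argue by contradiction, exploiting that $T$ occurs as a compression of each tensor power $\bigotimes_{i=1}^k T$. Fix $n\geq 3$ and let $T\colon M_n(\mathbb{C})\to M_n(\mathbb{C})$ be a non-factorizable $\tau_n$-Markov map; such a map exists by Example \ref{exp1} when $n=3$ and by the generalization of Example \ref{exp2} when $n\geq 4$. Suppose, towards a contradiction, that $T$ satisfies the asymptotic quantum Birkhoff property $(\ref{eq5})$, and set
\begin{equation*}
\varepsilon_k:=d_{\text{cb}}\Big(\bigotimes_{i=1}^k T\,,\ \text{conv}\big(\text{Aut}(\bigotimes_{i=1}^k M_n(\mathbb{C}))\big)\Big)\,,\qquad k\geq 1\,,
\end{equation*}
so that $\varepsilon_k\to 0$. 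Since ${\mathcal B}\big(M_{n^k}(\mathbb{C})\big)$ is finite-dimensional, the set $\text{conv}\big(\text{Aut}(M_{n^k}(\mathbb{C}))\big)$ is norm-closed, hence cb-closed, so for each $k$ one may choose a map $S_k=\sum_j c_j^{(k)}\,\text{ad}(u_j^{(k)})$ in it, with $u_j^{(k)}\in{\mathcal U}(n^k)$, $c_j^{(k)}\geq 0$ and $\sum_j c_j^{(k)}=1$, such that $\big\|\bigotimes_{i=1}^k T-S_k\big\|_{\text{cb}}=\varepsilon_k$.

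The next step is to compress back onto the first tensor leg. Let $\iota_k\colon M_n(\mathbb{C})\to\bigotimes_{i=1}^k M_n(\mathbb{C})=M_{n^k}(\mathbb{C})$ be the unital $*$-monomorphism $x\mapsto x\otimes 1\otimes\cdots\otimes 1$, and let $\iota_k^*=\iota_k^{-1}\circ\mathbb{E}$, where $\mathbb{E}$ is the $\tau_{n^k}$-preserving normal conditional expectation onto the first leg. In the tracial setting $\iota_k$ is a $(\tau_n,\tau_{n^k})$-Markov $*$-monomorphism, so both $\iota_k$ and $\iota_k^*$ are factorizable Markov maps and, being unital and completely positive, are complete contractions. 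Since $\bigotimes_{i=1}^k T$ sends $x\otimes 1\otimes\cdots\otimes 1$ to $T(x)\otimes 1\otimes\cdots\otimes 1$, we obtain the two identities
\begin{equation*}
\iota_k^*\circ\Big(\bigotimes_{i=1}^k T\Big)\circ\iota_k=T\,,\qquad
\Phi_k:=\iota_k^*\circ S_k\circ\iota_k=\sum_j c_j^{(k)}\,\iota_k^*\circ\text{ad}(u_j^{(k)})\circ\iota_k\,.
\end{equation*}
Each $\iota_k^*\circ\text{ad}(u_j^{(k)})\circ\iota_k$ is a composition of factorizable maps, hence factorizable, and a convex combination of factorizable $\tau_n$-Markov maps is again factorizable by Remark \ref{rem787877777}$(b)$; therefore $\Phi_k\in{\mathcal F}{\mathcal M}(M_n(\mathbb{C}))$. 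Moreover, using the first identity together with the fact that $\iota_k$ and $\iota_k^*$ are complete contractions,
\begin{equation*}
\|\Phi_k-T\|_{\text{cb}}=\Big\|\iota_k^*\circ\Big(S_k-\bigotimes_{i=1}^k T\Big)\circ\iota_k\Big\|_{\text{cb}}\le\Big\|S_k-\bigotimes_{i=1}^k T\Big\|_{\text{cb}}=\varepsilon_k\,,
\end{equation*}
which tends to $0$ as $k\to\infty$.

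To conclude, since ${\mathcal B}(M_n(\mathbb{C}))$ is finite-dimensional and ${\mathcal F}{\mathcal M}(M_n(\mathbb{C}))$ is closed under $w^*$-limits (Remark \ref{rem787877777}$(b)$), it is norm-closed; as $\Phi_k\to T$ in cb-norm, hence in norm, $T$ would be factorizable, contradicting the choice of $T$. Therefore $T$ does not satisfy $(\ref{eq5})$, which proves the theorem. I expect the only delicate point to be the assertion that the compressed maps $\Phi_k$ are factorizable: this is precisely where one needs factorizability to be stable under pre- and post-composition with the Markov embedding $\iota_k$ and the conditional expectation $\iota_k^*$, and under convex combinations — all recorded in Remark \ref{rem787877777}$(b)$ — whereas the compression identity $\iota_k^*\circ(\bigotimes_{i=1}^k T)\circ\iota_k=T$, the cb-norm estimate, and the norm-closedness of ${\mathcal F}{\mathcal M}(M_n(\mathbb{C}))$ are then routine.
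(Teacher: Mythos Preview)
Your proof is correct and follows essentially the same approach as the paper's own argument: both use the compression $\iota_k^*\circ(\,\cdot\,)\circ\iota_k$ via the first-leg embedding, the identity $\iota_k^*\circ(\bigotimes_{i=1}^k T)\circ\iota_k=T$, the permanence of factorizability under composition and convex combinations, and the norm-closedness of ${\mathcal F}{\mathcal M}(M_n(\mathbb{C}))$. The only cosmetic difference is that the paper isolates the quantitative inequality $d_{\text{cb}}(\bigotimes_{i=1}^k T,\,{\mathcal F}{\mathcal M}(M_{n^k}(\mathbb{C})))\geq d_{\text{cb}}(T,\,{\mathcal F}{\mathcal M}(M_n(\mathbb{C})))$ first and then deduces the result, whereas you unfold the same estimate directly into a contradiction argument.
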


\begin{proof}
We will show that any non-factorizable $\tau_n$-Markov map on $M_n(\mathbb{C})$ does not satisfy (\ref{eq5}). Such maps do exist for every $n\geq 3$\,, as it was shown in Section 3.

The key point in our argument is to prove that any $\tau_n$-Markov map $T\colon M_n(\mathbb{C})\rightarrow M_n(\mathbb{C})$\,, $n\geq 3$\,, satisfies the following inequality:
\begin{equation}\label{eq334433443}d_{\text{cb}}\left({\textstyle{\bigotimes\limits_{i=1}^k}} \,T\,, {\mathcal F}{\mathcal M}\left({\textstyle{\bigotimes\limits_{i=1}^k}} M_n(\mathbb{C})\right)\right)\geq  d_{\text{cb}}(T, \,\,{\mathcal F}{\mathcal M}(M_n(\mathbb{C})))\,, \quad k\geq 1\,. \end{equation}
Then, since $\text{conv}(\text{Aut}({\textstyle{\bigotimes_{i=1}^k}} M_n(\mathbb{C})))\subset {\mathcal F}{\mathcal M}\left({\textstyle{\bigotimes_{i=1}^k}} M_n(\mathbb{C})\right)$\,, for all $k\geq 1$\,, the desired conclusion will follow immediately, using the fact that the set of factorizable maps on $M_n(\mathbb{C})$ is closed in the norm-topology, cf. Remark \ref{rem787877777} $(b)$. (Note that in our concrete finite-dimensional setting, this latter fact can also be obtained directly from Theorem \ref{th1} using a simple ultraproduct argument.)

Now, in order to prove (\ref{eq334433443}), we show that given $m, l\geq 3$\,, then for any $\tau_m$-Markov map $T$ on $M_m(\mathbb{C})$ and any $\tau_l$-Markov map $S$ on $M_l(\mathbb{C})$\,, we have \begin{equation}\label{eq3334443000000000} d_{\text{cb}}(T\otimes S\,, {\mathcal F}{\mathcal M}(M_m(\mathbb{C})\otimes M_l(\mathbb{C})))\geq d_{\text{cb}}(T, \,\,{\mathcal F}{\mathcal M}(M_m(\mathbb{C})))\,. \end{equation}
Let $\iota\colon M_m(\mathbb{C})\rightarrow M_m(\mathbb{C})\otimes M_l(\mathbb{C})$ be defined by $\iota(x)\colon = x\otimes 1_l$\,, for all $x\in M_m(\mathbb{C})$\,. Then its adjoint map $\iota^*\colon M_m(\mathbb{C})\otimes M_l(\mathbb{C})\rightarrow M_m(\mathbb{C})$ is given by
$\iota^*(z)=(1_m\otimes \tau_l)(z)$\,, for all $z\in M_m(\mathbb{C})\otimes M_l(\mathbb{C})$\,.
It is easily checked that $\iota^*(T\otimes S)\iota=T$\,. Since $\|\iota\|_{\text{cb}}=\|\iota^*\|_{\text{cb}}=1$\,, we then obtain
\begin{eqnarray}\label{eq67} d_{\text{cb}}(T\otimes S\,, {\mathcal F}{\mathcal M}(M_m(\mathbb{C})\otimes M_l(\mathbb{C})))&\geq & d_{\text{cb}}(T, \,\,\iota^*{\mathcal F}{\mathcal M}(M_m(\mathbb{C})\otimes M_l(\mathbb{C}))) \iota)\,. \end{eqnarray}
By the permanence properties of factorizability,
$\iota^* {\mathcal F}{\mathcal M}(M_m(\mathbb{C})\otimes M_m(\mathbb{C})) \iota\subset {\mathcal F}{\mathcal M}(M_m(\mathbb{C}))$\,.
Together with (\ref{eq67}), this completes the proof of (\ref{eq3334443000000000}), which, in turn, yields (\ref{eq334433443}).
\end{proof}

It is now a natural question whether every factorizable $\tau_n$-Markov map on $M_n(\mathbb{C})$ does satisfy the asymptotic quantum Birkhoff property, for all $n\geq 3$\,. It turns out that this question has a tight connection to Connes' embedding problem, which is known to be equivalent to a number of different fundamental
problems in operator algebras (for references, see, e.g., Ozawa's excellent survey paper \cite{Oz}).

\begin{theorem}
If for any $n\geq 3$\,, every factorizable $\tau_n$ -Markov map on $M_n(\mathbb{C})$ satisfies the asymptotic quantum Birkhoff property, then Connes' embedding problem has a positive answer.
\end{theorem}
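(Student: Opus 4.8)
The plan is to contrapose: assuming Connes' embedding problem has a negative answer, I want to produce, for some $n \geq 3$, a factorizable $\tau_n$-Markov map on $M_n(\mathbb{C})$ that fails the asymptotic quantum Birkhoff property. The natural bridge is Lemma \ref{lem222222} together with the $\mathcal{F}_n$ versus $\mathcal{G}_n$ dichotomy discussed in Remark \ref{rem6666777}: if Connes' embedding problem fails, then by the cited refinement of Kirchberg's results one has $\mathcal{F}_n \subsetneq \mathcal{G}_n$ for some $n$, i.e., there is an $n \times n$ correlation matrix $B = (b_{ij})$ arising as $b_{ij} = \tau_N(u_i^* u_j)$ for unitaries $u_1, \dots, u_n$ in some finite von Neumann algebra $(N, \tau_N)$ (so $T_B$ is a factorizable $\tau_n$-Markov map by Lemma \ref{lem222222}), but $B$ is not in the closure of the matrices coming from unitaries in finite-dimensional matrix algebras $M_k(\mathbb{C})$.

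The key step is then to show that the asymptotic quantum Birkhoff property for $T_B$ would force $B \in \overline{\mathcal{F}_n}$, contradicting the choice of $B$. Concretely, suppose $d_{\mathrm{cb}}\big(\bigotimes_{i=1}^k T_B, \mathrm{conv}(\mathrm{Aut}(\bigotimes_{i=1}^k M_n(\mathbb{C})))\big) \to 0$. An element of $\mathrm{conv}(\mathrm{Aut}(\bigotimes_{i=1}^k M_n(\mathbb{C})))$ is a convex combination of maps $\mathrm{ad}(w)$ with $w$ a unitary in the matrix algebra $M_{n^k}(\mathbb{C})$; by Proposition \ref{prop2} (applied on $M_{n^k}(\mathbb{C})$ with $N$ abelian, so that $\tau_N$ is an average of point masses) such a map satisfies condition $(ii)$ of Theorem \ref{th1} with $N$ a finite-dimensional abelian algebra, hence in particular the "diagonal" part of its Choi data involves only unitaries in $M_{n^k}(\mathbb{C})$. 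I would then restrict attention to how these approximants act on the diagonal matrix units: tracking the quantity $\tau_{n^k}(e_{ij}^{(1)} \, (\,\cdot\,)(e_{ji}^{(1)}))$ for the first tensor leg, exactly as in the proof of Lemma \ref{lem222222}, shows that the $(i,j)$-entry $b_{ij}$ must be approximable by expressions $\tau_{n^k}(w_i^* w_j)$ with $w_i$ unitaries built from the diagonal compressions of the approximating unitary $w$. A $d_{\mathrm{cb}} \to 0$ control on $\bigotimes T_B$ translates (since all the relevant maps are unital trace-preserving and cb-norm dominates the $L^2$-distance on a finite-dimensional tracial algebra) into $|b_{ij} - \tau_{n^k}(w_i^* w_j)| \to 0$ uniformly in $i, j$. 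Since $M_{n^k}(\mathbb{C})$ is a finite-dimensional matrix algebra, this exhibits $B$ as a limit of matrices in $\mathcal{F}_n$, i.e., $B \in \overline{\mathcal{F}_n} = \mathcal{F}_n$, the desired contradiction.

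The main obstacle I anticipate is the passage from the cb-distance bound on the $k$-fold tensor power to a genuine approximation of the fixed matrix $B$ by finite-dimensional unitary correlation matrices, without losing uniformity. One has to be careful that compressing the approximating unitary $w \in M_{n^k}(\mathbb{C})$ by the diagonal system-of-matrix-units of the first tensor factor really does produce unitaries (and not just contractions) in a finite-dimensional algebra — this should follow by the same faithfulness-of-trace plus finiteness argument used repeatedly in Section 2 (e.g. in the proof of Lemma \ref{lem222222}, where the off-diagonal blocks $u_{jk}$ are forced to vanish), but here it only holds approximately, so one needs a small perturbation lemma: a near-unitary element of a finite matrix algebra is close to an actual unitary, with a bound depending only on the defect $\|w^*w - 1\|$ and not on $k$. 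Once that quantitative polar-decomposition estimate is in place, the rest is bookkeeping with cb-norm versus $2$-norm inequalities, and the theorem follows.
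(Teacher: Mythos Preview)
Your overall strategy matches the paper's: argue by contraposition, invoke the Dykema--Juschenko result to find $B\in\mathcal{G}_n\setminus\mathcal{F}_n$, note that $T_B$ is factorizable by Lemma~\ref{lem222222}, and show that the asymptotic quantum Birkhoff property for $T_B$ would force $B\in\mathcal{F}_n$. The tactical execution differs, however. The paper first compresses the $k$-fold tensor power back to $M_n(\mathbb{C})$ via the embedding $\iota_k(x)=x\otimes 1\otimes\cdots\otimes 1$ and its adjoint, rewrites each approximant $T_k$ as $j_k^*\circ\mathrm{ad}(u_k)\circ j_k$ with $u_k$ a unitary in a finite-dimensional tracial algebra, embeds everything trace-preservingly into the hyperfinite $II_1$-factor $R$, and then takes an ultraproduct to obtain an \emph{exact} factorization $T_B=j^*\circ\mathrm{ad}(u)\circ j$ through $R^\omega$. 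At that point the argument of Lemma~\ref{lem222222} applies verbatim to give $u=\mathrm{diag}(u_1,\dots,u_n)$ with $B_{kl}=\tau_{R^\omega}(u_k^*u_l)$; lifting the $u_k$ to unitaries in $R$ and using hyperfiniteness of $R$ places $B$ in $\mathcal{F}_n$.

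Your route avoids ultraproducts and works directly with the approximants, at the cost of the perturbation step you identify. This is legitimate: the computation in the proof of Lemma~\ref{lem222222} shows that the off-diagonal blocks of the approximating unitary have small $2$-norm (not small operator norm), so the diagonal blocks $u_{ii}$ are contractions with $\tau(u_{ii}^*u_{ii})$ close to $1$, and the polar-decomposition estimate $\|a-w\|_2^2\le 2(1-\tau(|a|))$ for a contraction $a$ in a finite tracial algebra produces nearby unitaries. Two points you should make explicit: first, the perturbed unitaries live in $M_{n^{k-1}}(\mathbb{C})\otimes N$ with $N$ finite-dimensional abelian, i.e., in a \emph{direct sum} of matrix algebras with a weighted trace rather than a single $M_m(\mathbb{C})$, so you need the easy fact that $\mathcal{F}_n$ is convex (rationalize the weights and block-embed) to conclude $B\in\mathcal{F}_n$; second, the cb-distance controls $|\tau_{n^k}(e_{ji}^{(1)}S_k(e_{ij}^{(1)}))-b_{ij}/n|$ only after observing that $\iota_k^*\circ(\bigotimes T_B)\circ\iota_k=T_B$ and that $\iota_k,\iota_k^*$ are complete contractions---this is exactly the compression step the paper isolates. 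The paper's ultraproduct argument sidesteps both bookkeeping issues by converting approximation to equality and invoking hyperfiniteness only at the end; your approach is more elementary but demands more care with the estimates.
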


\begin{proof}
Assume by contradiction that Connes's embedding problem has a negative answer. Then, by Dykema and Juschenko's results from \cite{DyJ} (as explained in Remark \ref{rem6666777}), there exists a positive integer $n$ such that ${{\mathcal G}_n\setminus {{\mathcal F}_n}}\neq \emptyset$\,. Choose $B\in {{\mathcal G}_n\setminus {{\mathcal F}_n}}$\,. It follows that the associated Schur multiplier $T_B$ is factorizable.

We will prove that $T_B$ does not satisfy the asymptotic quantum Birkhoff property. Suppose by contradiction that $T_B$ does satisfy (\ref{eq5})\,. For every positive integer $k$\,, let $\iota_k\colon M_n(\mathbb{C})\rightarrow M_{n^k}(\mathbb{C})$ be the map defined by
$\iota_k(x)\colon = x\otimes 1_n\otimes \ldots \otimes 1_n$\,, for all $x\in M_n(\mathbb{C})$\,.
We deduce that
\begin{equation*}
\lim\limits_{k\rightarrow \infty } d_{\text{cb}}(T, \,\, \text{conv}(\iota_k^*\circ \text{Aut}(M_{n^k}(\mathbb{C})) \circ \iota_k))=0\,.
\end{equation*}
For $k\geq 1$\,, choose operators $T_k\in \text{conv}(\iota_k^*\circ \text{Aut}(M_{n^k}(\mathbb{C})) \circ \iota_k)$ such that
\begin{equation}\label{eq785600}
\lim\limits_{k\rightarrow \infty}\|T-T_k\|_{\text{cb}}=0\,.
\end{equation}
 Each $T_k$ is of the form $T_k=\sum_{i=1}^{s_k} c_i^{(k)} \iota_k \circ \text{ad}(u_i^{(k)}) \circ \iota_k$\,, for some positive integer $s_k$\,, unitaries $u_i^{(k)}\in {\mathcal U}(n^k)$ and positive numbers $c_i^{(k)}$\,, $1\leq i\leq s_k$\,, with $\sum_{i=1}^{s_k} c_i^{(k)}=1$\,.

 Set $u_k\colon = (u_1^{(k)}\,, \ldots\,, u_{s_k}^{(k)})$\,. Then $u_k\in \bigoplus_{i=1}^{s_k} M_{n^k}(\mathbb{C})\colon = A_k$\,. Equip $A_k$ with the trace given by $\tau((a_1\,, \ldots\,, a_{s_k}))\colon = \sum_{i=1}^{s_k} c_i^{(k)} \tau_{n^k}(a_i)$\,, for all $(a_1\,, \ldots\,, a_{s_k})\in A_k$\,. Finally, define $j_k\colon M_n(\mathbb{C})\rightarrow A_k$ by $j_k(x)\colon = (\iota_k(x)\,, \ldots\,, \iota_k(x))$ ($s_k$ terms)\,, for all $x\in M_n(\mathbb{C})$\,. It can be checked that the adjoint $j_k^*$ of $j_k$ is given by $j_k^*((a_1\,, \ldots\,, a_{s_k}))=\iota_k^*(\sum_{i=1}^{s_k} c_i^{(k)}a_i)$\,, for all $(a_1\,, \ldots\,, a_{s_k})\in A_k$\,. Then $T_k$ can be rewritten as
\begin{equation}\label{eq5050998} T_k=j_k^* \circ \text{ad}(u_k) \circ j_k\,. \end{equation}
Since $A_k$ admits naturally a $\tau_k$-preserving embedding into the hyperfinite II$_1$-factor $R$\,, equipped with its trace $\tau_R$\,, we can replace in the above formula $A_k$ by $R$\,, and view $u_k$ as a unitary in $R$\,, respectively view $j_k$ as a unital embedding of $M_n(\mathbb{C})$ into $R$\,. By taking ultraproducts, and using (\ref{eq785600}) we obtain that \begin{equation*}
T=j^*\circ \text{ad}(u)\circ j\,,
\end{equation*}
where $u$ is a unitary in the ultrapower $R^\omega$ of $R$ and $j\colon M_n(\mathbb{C})\rightarrow R^\omega$ is a unital embedding. Using the identification
\begin{equation*}
R^\omega=j(M_n(\mathbb{C}))\otimes (j(M_n(\mathbb{C}))^\prime \cap R^\omega)\simeq M_n( j(M_n(\mathbb{C}))^\prime \cap R^\omega)\,,
\end{equation*}
we obtain from the proof of Lemma \ref{lem222222} applied to the factorizable Schur multiplier $T_B$ that $u=\text{diag}(u_1\,, \ldots\,, u_n)$\,, where $u_i\in j(M_n(\mathbb{C}))^\prime \cap R^\omega\subset R^\omega$\,, for all $1\leq i\leq n$, and that the $(k, l)$-th entry $B_{kl}$ of $B$ is given by $B_{kl}=\tau_{R^\omega}(u_k^*u_l)$\,, for all $1\leq k, l\leq n$\,. Here $\tau_{R^\omega}$ denotes the trace on $R^\omega$\,.

By a standard ultraproduct argument, for every $1\leq k\leq n$\,, we can find a sequence $(u_k^{(m)})_{m\geq 1}$ of unitaries in $R$ representing $u$\,, and we conclude  that
\begin{equation*}
\lim\limits_{m\rightarrow \infty} \tau_R\left(\left(u_k^{(m)}\right)^*u_l^{(m)}\right)=\tau_{R^\omega}(u_k^* u_l)\,, \quad 1\leq k, l\leq n\,.
\end{equation*}
Since ${{\mathcal F}_n}$ is a closed set, this shows that $B\in {{\mathcal F}_n}$\,, which contradicts the assumption on $B$\,. Therefore, the proof is complete.
\end{proof}

\section{On the best constant in the noncommutative little Grothendieck inequality}

Let $OH(I)$ denote Pisier's operator Hilbert space based on $l^2(I)$\,, for a given index set $I$\,. Further, let $A$ be a C$^*$-algebra and $T:A\rightarrow OH(I)$ a completely bounded map. Then, by the refinement of the second part of Corollary 3.4 of \cite{PiSh} obtained in \cite{HM}\,, there exist states $f_1$\,, $f_2$ on $A$ such that
\begin{equation}\label{eq33343}
\|Tx\|\leq \sqrt{2}\,\|T\|_{\text{cb}}{f_1(xx^*)}^{1/4}{f_2(x^*x)}^{1/4}\,, \quad x\in A\,.
\end{equation}

\begin{defi}\label{defbestconst}
For a completely bounded map $T:A\rightarrow OH(I)$ we denote by $C(T)$ the smallest constant $C> 0$ for which there exist states $f_1$\,, $f_2$ on $A$ such that
\begin{equation}\label{eq333435}
\|Tx\|\leq C {f_1(xx^*)}^{1/4}{f_2(x^*x)}^{1/4}\,, \quad x\in A\,.
\end{equation}
\end{defi}

The existence of a smallest constant $C(T)$ as above follows from a simple compactness argument using the fact that the set $Q(A)\colon= \{f\in A^*_{+}: \|f\|\leq 1\}$ is w$^*$-compact. From \cite{PiSh} we know that $\|T\|_{\text{cb}}\leq C(T)$\,. Hence, by (\ref{eq33343}), we infer that
\begin{equation}\label{eq8877887788787}
\|T\|_{\text{cb}}\leq C(T)\leq \sqrt{2}\,\|T\|_{\text{cb}}\,. \end{equation}
In the following we will prove that for suitable choices of $A$\,, $I$ and $T$\,, $C(T)> \|T\|_{\text{cb}}$\,, i.e., the constant $\sqrt{2}$ in (\ref{eq33343}) cannot be reduced to 1. It would be interesting to know what is the best constant $C_0$ in the noncommutative little Grothendieck inequality (\ref{eq33343})\,, i.e., what is the smallest constant $C_0$ for which $C(T)\leq C_0 \|T\|_{\text{cb}}$\,, for arbitrary choices of $A$\,, $I$ and $T$\,.

\begin{theorem}\label{th6000000000006}
There exist linear maps $T_1\colon M_3(\mathbb{C})\rightarrow OH(\{1, 2, 3\})$ and $T_2\colon l^\infty(\{1, \ldots, 4\})\rightarrow OH(\{1, 2\})$ such that
\[ \|T_i\|_{\text{cb}}< C(T_i)\,, \quad i=1, 2\,. \]
In particular, the best constant $C_0$ in the noncommutative little Grothendieck inequality (\ref{eq33343}) is strictly larger than 1.
\end{theorem}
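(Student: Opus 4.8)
The plan is to exploit the connection, developed in the previous sections, between factorizability of Markov maps and the existence of unitary dilations, together with the explicit non-factorizable examples already constructed. The key observation is that for a completely bounded map $T$ into $OH(I)$, the constant $C(T)$ carries genuine information about a certain associated completely positive map, and the gap $C(T)>\|T\|_{\text{cb}}$ can be forced by choosing $T$ so that this associated map is one of our non-factorizable $\tau_n$-Markov maps. Concretely, I would first recall the standard construction: given an orthonormal family, one builds $T$ from a Stinespring-type / Choi-type data $(a_1,\dots,a_d)$ so that the completely positive map $x\mapsto \sum_i a_i^* x a_i$ is the relevant object. For $T_1$ one takes $d=3$ and the matrices $a_1,a_2,a_3\in M_3(\mathbb{C})$ exactly as in Example \ref{exp1} (which satisfy $\sum a_i^*a_i=\sum a_ia_i^*=1_3$ and give a non-factorizable $\tau_3$-Markov map), defining $T_1\colon M_3(\mathbb{C})\to OH(\{1,2,3\})$ by $T_1(x)=\sum_{i=1}^3 (\tau_3(a_i^*x))\,e_i$ or the appropriate normalization that makes $\|T_1\|_{\text{cb}}$ computable. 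For $T_2$ one takes instead the diagonal matrices $a_1,a_2$ from the $d=2$ Schur-multiplier example (e.g.\ $a_1=\mathrm{diag}(1,1/\sqrt2,1/\sqrt2,1/\sqrt2)$ and $a_2$ as in Remark \ref{rem8080}(2) or Example \ref{exp2}), which are supported on $l^\infty(\{1,\dots,4\})$ and define a non-factorizable $\tau_4$-Markov Schur multiplier.

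The second step is to compute $\|T_i\|_{\text{cb}}$ exactly. Since $OH$ has the property that completely bounded maps into it are governed by the $\gamma_2$-type / factorization norm through Hilbert space, $\|T\|_{\text{cb}}$ for a map built from $(a_1,\dots,a_d)$ equals the norm of the associated matrix, which in our normalized examples works out to a clean value (for the Example \ref{exp1} data, the symmetry of the $a_i$ forces $\|T_1\|_{\text{cb}}$ to a specific explicit constant; similarly $\|T_2\|_{\text{cb}}$ is read off the $2\times2$ Gram data of $a_1,a_2$). The third and decisive step is to bound $C(T_i)$ from below. Here I would argue by contradiction: if $C(T_i)=\|T_i\|_{\text{cb}}$, then by the characterization of the extremal case in the little Grothendieck inequality (the states $f_1,f_2$ achieving equality must be as ``spread out'' as possible, and the inequality becomes tight on a suitable set of test elements), one can extract from the data a finite von Neumann algebra $N$ with a normal faithful tracial state $\tau_N$ and elements $v_1,\dots,v_d\in N$ with $\tau_N(v_i^*v_j)=\delta_{ij}$ such that $u:=\sum_i a_i\otimes v_i$ is a unitary in $M_n(\mathbb{C})\otimes N$ — precisely condition $(iii)$ of Theorem \ref{th1}. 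That would make the corresponding $\tau_n$-Markov map factorizable, contradicting Example \ref{exp1} (for $i=1$), respectively Corollary \ref{cor1} applied to the $d=2$ diagonal data (for $i=2$). Hence $C(T_i)>\|T_i\|_{\text{cb}}$, and since $C_0\ge C(T_i)/\|T_i\|_{\text{cb}}>1$, the final assertion follows.

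The main obstacle I anticipate is making the extremal-case analysis in the third step rigorous: one must show that equality $C(T)=\|T\|_{\text{cb}}$ in the little Grothendieck inequality does not merely yield states $f_1,f_2$ but forces a genuine unitary dilation over a \emph{finite} von Neumann algebra with a trace, so that Theorem \ref{th1} can be invoked. This requires carefully tracking the GNS construction associated to $f_1$ and $f_2$, verifying that the relevant algebra can be taken finite (using that we are in the matrix setting and an ultraproduct of matrix algebras is finite), and checking the orthonormality relations $\tau_N(v_i^*v_j)=\delta_{ij}$ emerge exactly — not just approximately — from the saturated inequality. Once this dictionary between ``tightness of the Grothendieck constant'' and ``existence of a tracial unitary dilation'' is in place, the non-factorizability results from Section 3 close the argument immediately, and the passage to the universal constant $C_0>1$ is then trivial.
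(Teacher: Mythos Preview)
Your overall plan is essentially the paper's: build $T_i$ from the very matrices $a_1,\dots,a_d$ of Examples \ref{exp1} and \ref{exp2}, and show that equality of the two constants would force a unitary $u=\sum a_i\otimes v_i$ in $A\otimes N$ with $N$ finite, which is impossible by the linear-independence argument behind Corollary \ref{cor1}. You also correctly flag the extremal-case analysis as the real work.

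Two points where your outline drifts from what actually succeeds. First, you have the easy and hard computations swapped. One does \emph{not} compute $\|T_i\|_{\text{cb}}$ explicitly; there is no clean formula here, and your appeal to a ``$\gamma_2$-type factorization norm'' does not yield one. What is computed directly is $C(T_i)=1$: with $\tau(a_i^*a_j)=\delta_{ij}$ and $\sum a_ia_i^*=\sum a_i^*a_i=d\cdot 1_A$, the upper bound $C(T)\le 1$ is Cauchy--Schwarz with $f_1=f_2=\tau$, and the lower bound comes from testing on the $a_i$. The contradiction hypothesis is then simply $\|T\|_{\text{cb}}=1$, not the abstract equality $C(T)=\|T\|_{\text{cb}}$.

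Second, your target for the extremal step is slightly too strong. You aim to recover the full condition $(iii)$ of Theorem \ref{th1}, including the trace orthonormality $\tau_N(v_i^*v_j)=\delta_{ij}$, so as to invoke factorizability. The paper does not (and, as far as one can see, cannot easily) extract that trace condition from $\|T\|_{\text{cb}}=1$. What one does obtain---via an approximate argument in $M_n(\mathbb{C})\otimes HS(n)$, passage to a self-adjoint unit vector, and an ultraproduct/GNS step---is only the existence of a finite von Neumann algebra $P$ and $w_1,\dots,w_d\in P$ with $w=\sum a_i\otimes w_i$ unitary. That already suffices: from $w^*w=1$ and linear independence of $\{a_i^*a_j\}$ one reads off $w_i^*w_j=\tfrac{1}{d}\delta_{ij}1_P$, so $\sqrt{d}\,w_1,\sqrt{d}\,w_2$ would be isometries with orthogonal ranges in a finite algebra, a contradiction. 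In other words, you should aim for the Corollary \ref{cor1} mechanism directly rather than routing through factorizability.
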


The key result that will be used in the proof of the above theorem is the following:

\begin{theorem}\label{th66767}
Let $(A, \tau)$ be a finite dimensional (unital) C$^*$-algebra with a faithful trace $\tau$\,. Furthermore, let $d$ be a positive integer and let $a_1\,, \ldots\,, a_d$ be elements in $A$ satisfying
\begin{enumerate}
\item [$(i)$] $\tau(a_i^*a_j)=\delta_{ij}$\,, $1\leq i, j\leq d$\,,
\item [$(ii)$] $\sum_{i=1}^d a_i^*a_i=\sum_{i=1}^d a_ia_i^*=d {1_A}$\,.
\end{enumerate}
Consider the map $T\colon A\rightarrow OH(d)\colon=OH(\{1\, \ldots \,, d\})$ given by
\begin{enumerate}
\item [$(iii)$] $Tx\colon =(\tau(a_1^*x)\,, \ldots\,, \tau(a_d^*x))\,, \quad a\in A$\,.
\end{enumerate}
Then $C(T)=1$\,. If, furthermore,
\begin{enumerate}
\item [$(iv)$] $d\geq 2$\,,
\item [$(v)$] $\{a_i^*a_j: 1\leq i, j\leq d\}$ is linearly independent,
\end{enumerate}
then $\|T\|_{\text{cb}}< 1$\,.
\end{theorem}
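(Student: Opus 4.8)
The statement has two halves: the equality $C(T)=1$, which uses only $(i)$--$(iii)$, and the strict inequality $\|T\|_{\text{cb}}<1$, which additionally invokes $(iv)$ and $(v)$.

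\textbf{The value of $C(T)$.} First I would record that $\tau$ is a \emph{state}: by $(i)$ and $(ii)$, $d=\sum_{i=1}^d\tau(a_i^*a_i)=\tau\bigl(\sum_{i=1}^d a_i^*a_i\bigr)=d\,\tau(1_A)$, so $\tau(1_A)=1$. For $C(T)\le 1$, take $f_1=f_2=\tau$: since $\tau$ is tracial, $\tau(xx^*)^{1/4}\tau(x^*x)^{1/4}=\tau(x^*x)^{1/2}$, and by $(i)$ the family $\{a_i\}_{i=1}^d$ is orthonormal in the GNS space $L^2(A,\tau)$ with $\tau(a_i^*x)=\langle x,a_i\rangle$, so $\|Tx\|^2=\sum_i|\tau(a_i^*x)|^2\le\|x\|_{L^2(A,\tau)}^2=\tau(x^*x)$ by Bessel's inequality; hence $C(T)\le1$. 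For $C(T)\ge 1$, note that $(iii)$ and $(i)$ give $Ta_j=(\tau(a_i^*a_j))_i=e_j$, so $\|Ta_j\|=1$; if states $f_1,f_2$ realize $C=C(T)$ in (\ref{eq333435}), then $1\le C\,f_1(a_ja_j^*)^{1/4}f_2(a_j^*a_j)^{1/4}$ for each $j$, and summing over $j$ and applying the Cauchy--Schwarz inequality twice, together with $\sum_j a_ja_j^*=\sum_j a_j^*a_j=d\,1_A$ from $(ii)$ and $f_1(1_A)=f_2(1_A)=1$, gives $d\le C\,d$. Therefore $C(T)=1$.

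\textbf{Reduction of $\|T\|_{\text{cb}}<1$.} Now assume $(iv)$ and $(v)$. By (\ref{eq8877887788787}) we already have $\|T\|_{\text{cb}}\le C(T)=1$, so it suffices to exclude $\|T\|_{\text{cb}}=1$. Suppose, toward a contradiction, that $\|T\|_{\text{cb}}=1$; then $\|T\|_{\text{cb}}=C(T)=1$, i.e.\ the optimal constant in the little Grothendieck factorization (\ref{eq333435}) of $T$ already equals its completely bounded norm. Building on Pisier's description of complete contractions into the operator Hilbert space $OH(d)$ (so that $\|T\|_{\text{cb}}=1$ yields an optimal factorization $T=w\circ v$ with $v\colon A\to OH(J)$ completely contractive, $w\colon OH(J)\to OH(d)$ a contraction, $J$ finite, by finite-dimensionality of $A$ and $OH(d)$), and exploiting the rigid normalizations $(i)$, $(ii)$ together with the tightness $\|T\|_{\text{cb}}=C(T)$, one extracts a finite von Neumann algebra $N$ with a normal faithful tracial state $\tau_N$ and elements $v_1,\dots,v_d\in N$ satisfying $\tau_N(v_i^*v_j)=\delta_{ij}$ for all $i,j$, such that $u:=\tfrac{1}{\sqrt d}\sum_{i=1}^d a_i\otimes v_i$ is a unitary in $A\otimes N$. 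Concretely this is carried out by realizing the Hilbert-space data of $v$ inside a tracial von Neumann algebra via a GNS-type construction and passing to an ultrapower to make the trace finite. (In particular, the $\tau$-Markov map $T_0(x):=\tfrac1d\sum_i a_i^*xa_i$ — which is completely positive, unital and trace-preserving by $(ii)$ — is then factorizable.)

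\textbf{The contradiction.} This is exactly the configuration treated in the proof of Corollary~\ref{cor1}. From $u^*u=1_{A\otimes N}$ and $1_A=\tfrac1d\sum_i a_i^*a_i$ (by $(ii)$) one obtains
\[
0=\frac1d\sum_{i,j=1}^d a_i^*a_j\otimes\bigl(v_i^*v_j-\delta_{ij}1_N\bigr)\quad\text{in }A\otimes N,
\]
and applying the functionals $\mathrm{id}_A\otimes\varphi$, $\varphi\in N^*$, together with the linear independence of $\{a_i^*a_j\}$ from $(v)$, forces $v_i^*v_j=\delta_{ij}1_N$ for all $i,j$. Since $d\ge 2$ by $(iv)$, we get $v_1^*v_1=v_2^*v_2=1_N$, so $v_1,v_2$ are unitaries ($N$ being a \emph{finite} von Neumann algebra), while $v_1^*v_2=0_N$ — a contradiction. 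Hence $\|T\|_{\text{cb}}\ne1$, i.e.\ $\|T\|_{\text{cb}}<1$.

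\textbf{Main obstacle.} The first and third steps are essentially bookkeeping; the real content is the extraction, in the second step, of the unitary $u$ (equivalently, the factorizability of $T_0$) from the bare equality $\|T\|_{\text{cb}}=1$. Knowing only $\|T\|_{\text{cb}}\le1$ gives merely the little Grothendieck inequality (\ref{eq33343}) with the non-optimal constant $\sqrt2$, which is useless here; it is the \emph{simultaneous} equalities $\|T\|_{\text{cb}}=C(T)=1$ — possible at all only because of the critical normalizations $(i)$ and $(ii)$ — that pin the optimal factorization of $T$ to a tracial von Neumann algebra and make its components assemble into a genuine unitary with $\tau_N(v_i^*v_j)=\delta_{ij}$. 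Making each of those passages precise (the optimal factorization, the move from Hilbert-space data to a von Neumann algebra, the ultrapower step ensuring finiteness of the trace, the verification of the constraints) is the delicate part. Note that no assumption that $A$ be a full matrix algebra is needed: the extraction produces a general finite tracial pair $(N,\tau_N)$ and the concluding computation in the third step uses only that $N$ is finite.
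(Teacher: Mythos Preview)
Your first and third steps are correct and essentially match the paper's argument (Lemma~\ref{lem667} and the final computation). The genuine gap is, as you yourself flag, the second step: you assert the existence of a finite von Neumann algebra $N$ and a unitary $u=\tfrac{1}{\sqrt d}\sum_i a_i\otimes v_i$, but the hints you give---an ``optimal factorization $T=w\circ v$ through $OH(J)$'' and ``realizing the Hilbert-space data of $v$ inside a tracial von Neumann algebra''---do not amount to an argument. Factoring a map into $OH(d)$ through another $OH(J)$ gains nothing, and there is no evident mechanism by which the equality $\|T\|_{\mathrm{cb}}=C(T)$ alone forces a tracial state to appear.

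The paper's route is quite different and much more concrete. One chooses, for each $\varepsilon>0$, an $n$ and a \emph{unitary} $u\in A\otimes M_n(\mathbb C)$ with $\|(T\otimes\mathrm{id})(u)\|_{M_n(OH)}>1-\varepsilon$ (possible since the unit ball of a finite-dimensional $C^*$-algebra is the convex hull of its unitaries). Writing $u=\sum_{i=1}^r a_i\otimes u_i$ in a basis extending $a_1,\dots,a_d$, Pisier's formula $\|\sum_i e_i\otimes u_i\|_{M_n(OH)}^2=\|\sum_i u_i\otimes\bar u_i\|$ lets one realize $\sum_i u_i\otimes\bar u_i$ as the map $\xi\mapsto\sum_i u_i\xi u_i^*$ on the Hilbert--Schmidt bimodule $HS(n)$; picking near-optimal self-adjoint unit vectors $\xi,\eta$ yields the approximate bimodule relations $u_i\xi\approx\eta u_i$ for $i\le d$ and $u_i\xi,\eta u_i\approx 0$ for $i>d$ (Lemma~\ref{lem667888}). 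An ultraproduct makes these exact. One then passes to $2\times 2$ matrices $s_i=\begin{pmatrix}0&0\\u_i&0\end{pmatrix}$ so that the single vector $\zeta=\tfrac{1}{\sqrt2}\begin{pmatrix}\xi&0\\0&\eta\end{pmatrix}$ \emph{commutes} with the $C^*$-algebra they generate; the vector state $\langle\,\cdot\,\zeta,\zeta\rangle$ is therefore tracial, and GNS produces a finite von Neumann algebra in which $v=\sum_i a_i\otimes v_i$ is a partial isometry with $v^*v=1\otimes p$, $vv^*=1\otimes(1-p)$ (the tail $i>d$ dies because its trace vanishes). Finally $p\sim 1-p$, so twisting by a partial isometry yields the unitary $w=\sum_i a_i\otimes w_i$ in $A\otimes pNp$ (Lemma~\ref{lem7878788}), after which your third step applies verbatim. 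None of this is captured by the sketch you propose; in particular, the \emph{source} of the trace is the bimodule commutation $s_i\zeta=\zeta s_i$, not any abstract factorization property of $OH$.
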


We will first prove a number of intermediate results.

\begin{lemma}\label{lem667}
Let $A$\,, $\tau$\,, $a_1\,, \ldots\,, a_d$ and $T\colon A\rightarrow OH(d)$ be as in Theorem \ref{th66767} $(i)$\,, $(ii)$ and $(iii)$\,. Then $C(T)=1$\,.
\end{lemma}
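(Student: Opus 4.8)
The plan is to establish the two inequalities $C(T)\le 1$ and $C(T)\ge 1$ separately, with the upper bound being the substantive direction. For the upper bound, the natural candidates for the states $f_1,f_2$ in Definition \ref{defbestconst} are both equal to the trace $\tau$: condition (i) says $\tau(a_i^*a_j)=\delta_{ij}$, so the functionals $x\mapsto \tau(a_i^*x)$ form an orthonormal system in $L^2(A,\tau)$, and hence for every $x\in A$ the Bessel-type computation gives
\begin{equation}\label{eq:bessel}
\|Tx\|^2=\sum_{i=1}^d|\tau(a_i^*x)|^2\le \|x\|_{2}^2=\tau(x^*x),
\end{equation}
where $\|\cdot\|_2$ is the $L^2(A,\tau)$-norm. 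By the same argument applied to $T$ composed with the adjoint operation (using $\tau(a_i^* x)=\overline{\tau(x^*a_i)}$ and condition (ii), or symmetry of the hypotheses under $a_i\mapsto a_i^*$), one also gets $\|Tx\|^2\le\tau(xx^*)$. Taking the geometric mean of the two bounds yields
\begin{equation*}
\|Tx\|\le \tau(xx^*)^{1/4}\tau(x^*x)^{1/4},\qquad x\in A,
\end{equation*}
which is exactly \eqref{eq:bessel} in the form required by Definition \ref{defbestconst} with $C=1$ and $f_1=f_2=\tau$. Hence $C(T)\le 1$.

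For the lower bound $C(T)\ge 1$, I would test the defining inequality \eqref{eq333435} on a well-chosen element. The cleanest choice is $x=1_A$: then $Tx=(\tau(a_1^*),\dots,\tau(a_d^*))$, and I would instead exploit the "resolution of identity'' condition (ii) more directly. Take $x=\sum_{j=1}^d \overline{c_j}\,a_j$ for scalars $c=(c_j)$; by (i), $\|x\|_2^2=\sum_j|c_j|^2$ and $Tx=(c_1,\dots,c_d)\cdot(\text{Gram factor})$—more precisely $\tau(a_i^*x)=\sum_j\overline{c_j}\,\tau(a_i^*a_j)=\overline{c_i}$, so $\|Tx\|^2=\sum_i|c_i|^2=\tau(x^*x)$. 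Now suppose \eqref{eq333435} held with some constant $C$ and states $f_1,f_2$; applying it to this $x$ and then, say, averaging $c$ over the unit sphere (or choosing $c$ adapted to $f_1,f_2$) forces $C\ge 1$, since $f_i(xx^*),f_i(x^*x)\le \|x\|^2$ can be made comparable to $\tau(x^*x)$ for a suitable choice. I expect the clean route is: pick $c$ so that $x$ is (a scalar multiple of) a partial isometry or unitary, using that $\sum a_i^*a_i=\sum a_ia_i^*=d1_A$, making $f_i(xx^*)$ and $f_i(x^*x)$ both equal to $\|x\|^2$; then \eqref{eq333435} reads $\|x\|_2\le C\|x\|^2$-type bound and equality in Cauchy–Schwarz pins down $C\ge1$.

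The main obstacle is the lower bound: unlike the upper bound, which only needs orthonormality, proving $C(T)\ge 1$ requires producing a single element $x$ for which $\|Tx\|$ is as large as possible relative to the right-hand side of \eqref{eq333435} \emph{simultaneously for every admissible pair of states $f_1,f_2$}. The trick will be to use condition (ii) to build an $x$ whose left and right supports are all of $A$ (e.g.\ a unitary, when one exists, or more robustly an appropriate average), so that $f_i(x^*x)=f_i(xx^*)=\|x\|^2$ independently of $f_i$; combined with the identity $\|Tx\|^2=\tau(x^*x)$ from the paragraph above and the normalization forcing $\tau(x^*x)$ close to $\|x\|^2$, this yields $C(T)\ge 1$. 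I would also keep in mind the fallback of invoking \eqref{eq8877887788787}, which already gives $C(T)\ge \|T\|_{\mathrm{cb}}$, and then separately showing $\|T\|_{\mathrm{cb}}$ (or at least $\|T\|$) equals $1$ via \eqref{eq:bessel} together with exhibiting $x$ attaining the bound—though the direct argument above is preferable as it avoids computing the cb-norm.
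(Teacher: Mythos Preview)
Your upper bound $C(T)\le 1$ is correct and matches the paper. Note that the detour through a second inequality $\|Tx\|^2\le\tau(xx^*)$ is unnecessary: since $\tau$ is a trace, $\tau(x^*x)=\tau(xx^*)$ automatically, so \eqref{eq:bessel} already gives $\|Tx\|\le\tau(xx^*)^{1/4}\tau(x^*x)^{1/4}$ with $f_1=f_2=\tau$.

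The lower bound is where your proposal has a genuine gap. You correctly set up the test family $x=\sum_j\bar c_j a_j$ with $\|Tx\|^2=|c|^2$, but then hedge between two routes. Your preferred ``clean route''---choosing $c$ so that $x$ is unitary---does not work in general: there is no reason for $\mathrm{span}\{a_1,\dots,a_d\}$ to contain a unitary. In the $M_3(\mathbb{C})$ example of Theorem~\ref{th6000000000006} the $a_i$ are real skew-symmetric, so any real linear combination is a $3\times 3$ skew-symmetric matrix, hence singular; one can check complex combinations do not help either. Your fallback via $\|T\|_{\mathrm{cb}}$ is circular here, since the whole point of Theorem~\ref{th66767} is to show $\|T\|_{\mathrm{cb}}<1=C(T)$.

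The missing idea is that you do not need a \emph{single} test element that works for all states; you test on each $a_i$ separately and \emph{sum}. Since $Ta_i=e_i$, the inequality \eqref{eq333435} gives $1\le K^2 f_1(a_ia_i^*)^{1/2}f_2(a_i^*a_i)^{1/2}$ for every $i$. Summing and applying Cauchy--Schwarz,
\[
d\;\le\; K^2\Bigl(\sum_{i=1}^d f_1(a_ia_i^*)\Bigr)^{1/2}\Bigl(\sum_{i=1}^d f_2(a_i^*a_i)\Bigr)^{1/2}\;=\;K^2\,f_1(d\,1_A)^{1/2}f_2(d\,1_A)^{1/2}\;=\;K^2 d,
\]
using condition (ii). Hence $K\ge 1$. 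Your averaging idea over the unit sphere would also succeed (it is the continuous version of this sum), but you did not carry it out, and the route you flagged as the clean one is a dead end.
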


\begin{proof}
By $(i)$\,, $a_1\,, \ldots\,, a_d$ is an orthonormal set in $L^2(A, \tau)$\,. Moreover, $\tau(a_1^*x)\,, \ldots\,, \tau(a_d^*x)$ are the coordinates of the orthogonal projection $P$ of $x\in A=L^2(A, \tau)$ onto $E\colon=\text{span}\{a_1\, \ldots\,, a_d\}$ with respect to the basis $\{a_1\,, \ldots\,, a_d\}$\,. Thus
\[ \|Tx\|=\sum\limits_{i=1}^d |\tau(a_i^*x)|^2=\|Px\|_2\leq \|x\|_2={\tau(x^*x)}^{1/2}={\tau(x^*x)}^{1/4}{\tau(x^*x)}^{1/4}\,. \]
Hence $C(T)\leq 1$\,. Conversely, assume that
\[ \|Tx\|\leq K {f_1(xx^*)}^{1/4}{f_2(x^*x)}^{1/4}\,, \quad x\in A\,, \]
for a constant $K> 0$ and states $f_1$\,, $f_2$ on $A$\,. By $(i)$ and $(ii)$\,, it follows that for every $1\leq i\leq d$\,,
\[ T {a_i}=(0\,, \ldots\,, 1\,, 0\,, \ldots\,, 0)\,, \]
where the number 1 is at the $i^{\text{th}}$ coordinate. Therefore,
$1=\|T a_i\|^2\leq K^2 {f_1(a_i {a_i}^*)}^{1/2}{f_2({a_i}^*{a_i})}^{1/2}$\,.
By the Cauchy-Schwarz inequality and $(ii)$\,, we infer that
\[ d=\sum\limits_{i=1}^d \|T a_i\|^2\leq K^2 \left(\sum\limits_{i=1}^2 f_1(a_i a_i^*)\right)^{1/2}\left(\sum\limits_{i=1}^2 f_2(a_i^* a_i)\right)^{1/2}=K^2d\,. \]
Hence $K\geq 1$\,, which proves that $C(T)\geq 1$\,, and the conclusion follows.
\end{proof}

\begin{lemma}\label{lem6678}
Let $A$\,, $\tau$\,, $a_1\,, \ldots\,, a_d$ be as in Theorem \ref{th66767} $(i)$ and $(ii)$\,, set $r\colon= \text{dim}(A)$ and choose $a_{d+1}\,, \ldots\,, a_r$
such that the set $\{a_1\,, \ldots \,, a_r\}$ is an orthonormal basis for $A$\,. Let $B$ be a unital C$^*$-algebra. Then every element $u\in A\otimes B$ has a unique representation of the form
\begin{equation}\label{eq9999998}
u=\sum\limits_{i=1}^r a_i\otimes u_i\,,
\end{equation}
where $u_i\in B$\,, $1\leq i\leq d$\,. Moreover, if $u$ is unitary, then
\begin{equation}\label{eq999999}
\sum\limits_{i=1}^r u_i^*u_i=\sum\limits_{i=1}^r u_i u_i^*=1_B\,.
\end{equation}
\end{lemma}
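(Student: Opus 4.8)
The plan is to handle the two claims separately, both reducing to elementary facts once one exploits that $A$ is finite dimensional. For the first assertion, observe that $\{a_1, \ldots, a_r\}$, being an orthonormal family in $L^2(A, \tau)$ of cardinality $r = \dim A$, is in particular a linear basis of the vector space $A$. Since $\dim A < \infty$, the algebraic tensor product of $A$ and $B$ already coincides with the C$^*$-algebra $A \otimes B$, so the assignment $B^r \to A \otimes B$, $(u_1, \ldots, u_r) \mapsto \sum_{i=1}^r a_i \otimes u_i$, is a linear bijection. This gives at once both the existence and the uniqueness of the representation $(\ref{eq9999998})$.

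For the ``moreover'' part, the main tool is the slice map $E \colon A \otimes B \to B$ determined by $E(a \otimes b) = \tau(a)\, b$; it is a well defined linear map (on the algebraic tensor product, which as just noted equals $A \otimes B$), satisfying $E\bigl(\sum_k c_k \otimes b_k\bigr) = \sum_k \tau(c_k)\, b_k$ for arbitrary finite families $c_k \in A$, $b_k \in B$. I would first record that $\tau$ is automatically normalized: applying $\tau$ to the relation $\sum_{i=1}^d a_i^* a_i = d\, 1_A$ of hypothesis $(ii)$ in Theorem $\ref{th66767}$ and invoking $(i)$ gives $d\, \tau(1_A) = \sum_{i=1}^d \tau(a_i^* a_i) = d$, whence $\tau(1_A) = 1$ and therefore $E(1_A \otimes 1_B) = 1_B$.

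Now let $u = \sum_{i=1}^r a_i \otimes u_i$ be unitary. Expanding $u^* u = \sum_{i,j=1}^r a_j^* a_i \otimes u_j^* u_i$, applying $E$, and using the orthonormality relation $\tau(a_j^* a_i) = \delta_{ij}$ (valid for all $1 \le i, j \le r$, since $\{a_1, \ldots, a_r\}$ is an orthonormal basis), one obtains $1_B = E(1_A \otimes 1_B) = E(u^* u) = \sum_{i=1}^r u_i^* u_i$. Symmetrically, expanding $u u^* = \sum_{i,j=1}^r a_i a_j^* \otimes u_i u_j^*$ and applying $E$, one uses $\tau(a_i a_j^*) = \tau(a_j^* a_i) = \delta_{ij}$, the first equality being the trace property of $\tau$; this yields $1_B = E(u u^*) = \sum_{i=1}^r u_i u_i^*$, which is $(\ref{eq999999})$. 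No step presents a genuine obstacle; the only points worth a little care are that the tensor product may be taken algebraically because $\dim A < \infty$ (so that the slice map is unambiguous and the representation $(\ref{eq9999998})$ is unique), and that $\tau(1_A) = 1$ must be extracted from $(i)$ and $(ii)$ rather than assumed.
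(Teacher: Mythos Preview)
Your proof is correct and follows essentially the same route as the paper: both apply the slice map $\tau\otimes\text{id}_B$ to $u^*u$ and $uu^*$ and then invoke orthonormality together with the trace property of $\tau$. Your version is slightly more explicit in justifying why the algebraic tensor product suffices and in deriving $\tau(1_A)=1$ from $(i)$ and $(ii)$, but the argument is the same.
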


\begin{proof}
Existence and uniqueness of $u_1\,, \ldots\,, u_r\in B$ in (\ref{eq9999998}) is obvious. To prove (\ref{eq999999})\,, note that if $u^*u=uu^*=1_{A\otimes B}$\,, then by $(i)$\,,
\[ 1_B=(\tau_A\otimes {\text{id}}_B) (u^*u)=\sum\limits_{i, j=1}^d \tau_A(a_i^*a_j)u_i^* u_j=\sum\limits_{i=1}^d u_i^* u_i\,, \]
and similarly, by the trace property of $\tau_A$ and $(i)$\,,
\[ 1_B=(\tau_A\otimes {\text{id}}_B) (uu^*)=\sum\limits_{i, j=1}^d \tau_A(a_i a_j^*)u_i u_j^*=\sum\limits_{i=1}^d u_i u_i^*\,, \]
which completes the proof.
\end{proof}

\begin{lemma}\label{lem667888}
Let $A$\,, $\tau$\,, $a_1\,, \ldots\,, a_d$ and $T$ be as in Theorem \ref{th66767} $(i)$\,, $(ii)$ and $(iii)$\,, and choose $a_{d+1}\,, \ldots\,, a_r$ as in Lemma \ref{lem6678}. Assume further that $\|T\|_{\text{cb}}=1$\,. Then
\begin{enumerate}
\item [$(a)$] For every $\varepsilon> 0$\,, there exists a unital C$^*$-algebra $B(\varepsilon)$, a Hilbert $B(\varepsilon)$-bimodule ${\mathcal H}(\varepsilon)$, elements $u_1\,, \ldots\,, u_r\in B(\varepsilon)$ and unit vectors $\xi\,, \eta\in {\mathcal H}(\varepsilon)$ such that the operator $u\colon =\sum_{i=1}^r a_i\otimes u_i\in A\otimes {B(\varepsilon)}$ is unitary and the following inequalities are satisfied:
    \begin{eqnarray}
    \sum\limits_{i=1}^d \|u_i\xi-\eta{u_i}\|^2+\sum\limits_{i=d+1}^r (\|u_i\xi\|^2+\|\eta{u_i}\|^2)<  \varepsilon\,,\label{eq33223322}\\
    \sum\limits_{i=1}^d \|u_i^*\eta-\xi{u_i^*}\|^2+\sum\limits_{i=d+1}^r (\|u_i^* \eta\|^2+\|\xi u_i^*\|^2)<  \varepsilon\,. \label{eq332233229}
    \end{eqnarray}
\item [$(b)$] There exist a unital C$^*$-algebra $B$, a Hilbert bimodule ${\mathcal H}$, elements $u_1\,, \ldots\,, u_r\in B$ and unit vectors $\xi\,, \eta\in {\mathcal H}$ such that the operator $u\colon =\sum_{i=1}^r a_i\otimes u_i\in A\otimes B$ is unitary and the following identities are satisfied:
\begin{eqnarray}
    &&u_i\xi=\eta{u_i}\,, \quad u_i^*\eta=\xi{u_i^*}\,, \qquad \,\,\, 1\leq i\leq d\,,\label{eq332233228}\\
    && u_i \xi=\eta{u_i}=u_i^* \eta=\xi u_i^*=0\,, \quad d+1\leq i\leq r\,. \label{eq3322332298}
    \end{eqnarray}
\end{enumerate}
\end{lemma}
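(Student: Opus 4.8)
The plan is to derive $(a)$ from the explicit description of the completely bounded norm of a map into the operator Hilbert space, combined with the near-equality case of a Cauchy--Schwarz estimate, and then to obtain $(b)$ from $(a)$ by an ultraproduct limit.

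For $(a)$: note that $Ta_j=e_j$ for $1\le j\le d$ and $Ta_j=0$ for $d<j\le r$ (with $e_1,\dots,e_d$ the canonical basis of $OH(d)$), so any $W=\sum_{j=1}^r a_j\otimes c_j\in A\otimes M_n(\mathbb C)$ has $(T\otimes\mathrm{id}_{M_n})(W)=\sum_{i=1}^d e_i\otimes c_i$. Using the standard identity $\big\|\sum_{i=1}^d e_i\otimes c_i\big\|^2_{M_n(OH(d))}=\big\|\sum_{i=1}^d c_i\otimes\overline{c_i}\big\|_{M_n\otimes\overline{M_n}}$, the Russo--Dye theorem (the closed unit ball of the unital $C^*$-algebra $M_n(A)$ is the closed convex hull of its unitaries), and the fact that the norm of a linear image is a continuous convex function, one gets
\begin{equation*}
\|T\|_{\mathrm{cb}}^2=\sup\Big\{\Big\|\sum_{i=1}^d c_i\otimes\overline{c_i}\Big\|_{M_n\otimes\overline{M_n}}\ :\ n\ge 1,\ W=\textstyle\sum_{j=1}^r a_j\otimes c_j\in A\otimes M_n\text{ unitary}\Big\}\,,
\end{equation*}
and for such $W$, Lemma \ref{lem6678} gives $\sum_{j=1}^r c_j^*c_j=\sum_{j=1}^r c_jc_j^*=1_n$. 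Identifying $\mathbb C^n\otimes\overline{\mathbb C^n}$ with $L^2(M_n)$ (Hilbert--Schmidt), the operator $\sum_i c_i\otimes\overline{c_i}$ becomes $\zeta\mapsto\sum_i c_i\zeta c_i^*$, and for unit vectors $\xi,\eta\in L^2(M_n)$ a short computation gives
\begin{equation*}
\Big\langle\Big(\sum_{i=1}^d c_i\otimes\overline{c_i}\Big)\xi,\eta\Big\rangle=\sum_{i=1}^d\langle c_i\xi,\eta c_i\rangle_2=\overline{\sum_{i=1}^d\langle c_i^*\eta,\xi c_i^*\rangle_2}\,.
\end{equation*}
Since $\sum_{i=1}^d c_i^*c_i\le 1$ and $\sum_{i=1}^d c_ic_i^*\le 1$, we have $\sum_{i=1}^d\|c_i\xi\|_2^2\le 1$, $\sum_{i=1}^d\|\eta c_i\|_2^2\le 1$, so the middle sum is $\le1$; this re-proves $\|T\|_{\mathrm{cb}}\le C(T)=1$.

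Now assume $\|T\|_{\mathrm{cb}}=1$. For each $m$ pick $n_m$, a unitary $W^{(m)}=\sum_j a_j\otimes c_j^{(m)}\in A\otimes M_{n_m}$ and unit vectors $\xi_m,\eta_m\in L^2(M_{n_m})$ with (after a phase rotation of $\eta_m$) $\mathrm{Re}\sum_{i=1}^d\langle c_i^{(m)}\xi_m,\eta_m c_i^{(m)}\rangle\ge 1-\delta_m$, $\delta_m\to0$. From the polarization identity $\|a-b\|_2^2=\|a\|_2^2+\|b\|_2^2-2\,\mathrm{Re}\langle a,b\rangle$ summed over $i\le d$ and the bounds above we get $\sum_{i=1}^d\|c_i^{(m)}\xi_m-\eta_m c_i^{(m)}\|_2^2\le 2\delta_m$; moreover $1-\delta_m\le\mathrm{Re}\sum_{i\le d}\langle c_i^{(m)}\xi_m,\eta_m c_i^{(m)}\rangle\le\big(\sum_{i\le d}\|c_i^{(m)}\xi_m\|_2^2\big)^{1/2}$ forces $\sum_{i\le d}\|c_i^{(m)}\xi_m\|_2^2\ge1-2\delta_m$, hence (by $\sum_{i=1}^r(c_i^{(m)})^*c_i^{(m)}=1$) $\sum_{i>d}\|c_i^{(m)}\xi_m\|_2^2\le2\delta_m$, and likewise $\sum_{i>d}\|\eta_m c_i^{(m)}\|_2^2\le2\delta_m$. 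Applying the \emph{same} computation to the transposed expression $\sum_i\langle c_i^*\eta,\xi c_i^*\rangle$ (whose real part is also $\ge 1-\delta_m$ by the displayed identity) yields in addition $\sum_{i\le d}\|(c_i^{(m)})^*\eta_m-\xi_m(c_i^{(m)})^*\|_2^2\le2\delta_m$ and $\sum_{i>d}(\|(c_i^{(m)})^*\eta_m\|_2^2+\|\xi_m(c_i^{(m)})^*\|_2^2)\le4\delta_m$. Choosing $m$ large enough that $6\delta_m<\varepsilon$ and setting $B(\varepsilon):=M_{n_m}$, $\mathcal H(\varepsilon):=L^2(M_{n_m})$ with left/right multiplication as bimodule structure, $u_j:=c_j^{(m)}$, $\xi:=\xi_m$, $\eta:=\eta_m$, we obtain exactly $(a)$ (that $u=\sum_j a_j\otimes u_j$ is unitary is the choice of $W^{(m)}$).

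For $(b)$: apply $(a)$ with $\varepsilon=1/m$, $m\ge 1$, and fix a free ultrafilter $\omega$ on $\mathbb N$. Put $B:=\prod_\omega M_{n_m}$, $\mathcal H:=\prod_\omega L^2(M_{n_m})$ — a Hilbert $B$-bimodule — $u_j:=(u_j^{(m)})_\omega$ (well-defined since $\|u_j^{(m)}\|\le\|W^{(m)}\|\,\|\varphi_j\|=\|\varphi_j\|$, where $\varphi_j\in A^*$ is dual to $a_j$), and $\xi:=(\xi_m)_\omega$, $\eta:=(\eta_m)_\omega$, which are unit vectors. Then $u:=\sum_j a_j\otimes u_j\in A\otimes B$ is the class of the sequence of unitaries $(W^{(m)})$, hence unitary, and $\sum_j u_j^*u_j=\sum_j u_ju_j^*=1_B$ by Lemma \ref{lem6678}. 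Since the right-hand sides in $(a)$ tend to $0$, each of $\|u_i^{(m)}\xi_m-\eta_m u_i^{(m)}\|_2$, $\|(u_i^{(m)})^*\eta_m-\xi_m(u_i^{(m)})^*\|_2$ (for $i\le d$) and $\|u_i^{(m)}\xi_m\|_2$, $\|\eta_m u_i^{(m)}\|_2$, $\|(u_i^{(m)})^*\eta_m\|_2$, $\|\xi_m(u_i^{(m)})^*\|_2$ (for $i>d$) tends to $0$, so passing to the $\omega$-limit gives $(\ref{eq332233228})$ and $(\ref{eq3322332298})$.

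The crux is the middle step of $(a)$: setting up the reduction to unitaries correctly (Russo--Dye together with the matrix description of $OH(d)$), and then squeezing genuine quantitative rigidity out of the \emph{near}-equality case of the Cauchy--Schwarz estimate while keeping the errors of order $\delta_m\to0$. The ``transposed'' rewriting needed for the second inequality of $(a)$ and the ultraproduct passage in $(b)$ are then routine.
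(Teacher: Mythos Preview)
Your argument is correct and follows essentially the same route as the paper: reduce to unitaries in $A\otimes M_n(\mathbb{C})$ via Russo--Dye, identify $M_n\otimes\overline{M_n}$ with operators on $L^2(M_n)$, extract near-equality in Cauchy--Schwarz, and pass to an ultraproduct for part~$(b)$. The one minor difference is in how you obtain the second inequality~\eqref{eq332233229}: the paper first reduces to \emph{self-adjoint} $\xi,\eta\in HS(n)_{\mathrm{sa}}$ (using that $\sum_i u_i\otimes\bar u_i$ preserves the real subspace $HS(n)_{\mathrm{sa}}$ and attains its norm there), so that~\eqref{eq332233229} follows from~\eqref{eq33223322} simply by taking adjoints; you instead keep general $\xi,\eta$ and use the algebraic identity $\langle c_i\xi,\eta c_i\rangle=\overline{\langle c_i^*\eta,\xi c_i^*\rangle}$ to run the same near-equality argument a second time. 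Both tricks are equivalent in content; your version avoids the (easy) real-structure reduction at the cost of a second pass through the estimate, and your constants come out as $6\delta_m$ rather than the paper's single clean subtraction giving~$\varepsilon$ directly, which is of course immaterial.
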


\begin{proof}
Let $\varepsilon> 0$\,. Since $\|T\|_{\text{cb}}=1$\,, there exists a positive integer $n$ such that $\|T\otimes \text{id}_{M_n(\mathbb{C})}\|_{\text{cb}}> 1-{\varepsilon/4}$\,. Since $\text{dim}(A)< \infty$\,, the unit ball of $M_n(A)=A\otimes M_n(\mathbb{C})$ is the convex hull of its unitary operators. Hence there exists a unitary operator $u\in A\otimes M_n(\mathbb{C})$ such that
\[ \|(T\otimes \text{id}_{M_n(\mathbb{C})})(u)\|_{M_n(OH)}> 1-{\varepsilon/4}\,. \]
By Lemma \ref{lem6678}, $u$ has the form $u=\sum_{i=1}^r a_i\otimes u_i$\, for a unique set of elements $u_1\,, \ldots\,, u_r\in M_n(\mathbb{C})$ satisfying
\begin{equation}\label{eq555555555}
\sum\limits_{i=1}^r u_i^* u_i=\sum\limits_{i=1}^r u_i u_i^*=1_n\,.
\end{equation}
By condition $(iii)$ in Theorem \ref{th66767}\,,
\[ T(a_i)=\left\{\begin{array}{lcl}
                                 e_i\,, \,\,\, 1\leq i\leq d\\
                                 0\,, \,\,\, d+1\leq i\leq r\,,
                                \end{array}
                        \right. \]
where $e_i$ is the $i^{\text{th}}$ vector in the standard unit vector basis of $l^2(\{1\,, \ldots\,, d\})=OH(d)$\,. Hence
\[ (T\otimes \text{id}_{M_n(\mathbb{C})})(u)=\sum\limits_{i=1}^d e_i \otimes {u_i}\,. \]
It then follows (cf. \cite{Pi6}) that
\begin{equation*} \left\|\sum\limits_{i=1}^d u_i \otimes \bar{u_i}\right\|_{M_n(\mathbb{C})\otimes \overline{M_n(\mathbb{C})}}=\left\|\sum\limits_{i=1}^d e_i\otimes u_i\right\|^2_{M_n(OH)}=\left\|(T\otimes \text{id}_{M_n(\mathbb{C})})(u)\right\|_{M_n(OH)}> \left(1-\frac{\varepsilon}{4}\right)^2> 1-\frac{\varepsilon}{2}\,. \end{equation*}
We can identify ${M_n(\mathbb{C})\otimes \overline{M_n(\mathbb{C})}}$ isometrically with the bounded operators on $HS(n)=L^2(M_n(\mathbb{C})\,, \text{Tr})$\,, where $\text{Tr}$ denotes the standard non-normalized trace on $M_n(\mathbb{C})$\,, by letting $a\otimes \bar{b}$ correspond to $L_a R_{b^*}$\,, i.e., $(a\otimes \bar{b})\xi=a\xi b^*$\,, for all $\xi \in HS(n)$\,. In particular,
\[ \sum\limits_{i=1}^d (u_i \otimes \overline{u_i})\xi=\sum\limits_{i=1}^r u_i \xi u_i^*\,, \quad \xi\in HS(n)\,. \]
Let ${HS(n)}_{\text{sa}}$ denote the self-adjoint part of $HS(n)$\,. Since $HS(n)={HS(n)}_{\text{sa}} + i {HS(n)}_{\text{sa}}$\,, and the operator $\sum_{i=1}^d u_i \otimes \overline{u_i} \in {\mathcal B}(HS(n))$ leaves ${HS(n)}_{\text{sa}}$ invariant, one checks easily that the norm of $\sum_{i=1}^d u_i \otimes \overline{u_i}$ is the same as the norm of $\sum_{i=1}^d u_i \otimes \overline{u_i}$ restricted to ${HS(n)}_{\text{sa}}$\,. By compactness of the unit ball in ${HS(n)}_{\text{sa}}$ we deduce that there exist vectors $\xi, \eta \in {HS(n)}_{\text{sa}}$ such that
\begin{equation}\label{eq4444444444444}
\|\xi\|_2=\|\eta\|_2=1\,,
\end{equation}
satisfying, moreover,
\begin{equation*}
\left|\left\langle \sum\limits_{i=1}^d u_i \xi u_i^*\,, \eta \right\rangle _{HS(n)}\right|=\left\|\sum\limits_{i=1}^d u_i \otimes \overline{u_i}\right\|> 1-\frac{\varepsilon}{2}\,.
\end{equation*}
Furthermore, since $\left\langle \sum\limits_{i=1}^d u_i \xi u_i^*\,, \eta \right\rangle _{HS(n)}$ is a real number, we infer that
\begin{equation}\label{eq444444444444456}
\sum\limits_{i=1}^d \langle u_i \xi\,, \eta u_i\rangle =\left\langle \sum\limits_{i=1}^d u_i \xi u_i^*\,, \eta \right\rangle _{HS(n)}> 1-\frac{\varepsilon}{2}\,,
\end{equation}
by replacing $\eta$ with $-\eta$\,, if needed. Hence,
\begin{eqnarray}\label{eq44334434343}
\sum\limits_{i=1}^d \left(\|u_i \xi\|_2^2 + \|\eta u_i\|_2^2 -  \|u_i \xi -\eta u_i\|_2^2\right)&=& 2\text{Re} \sum\limits_{i=1}^d \langle u_i \xi\,, \eta u_i\rangle _{HS(n)}\\
&= & 2 \sum\limits_{i=1}^d \langle u_i \xi u_i^*\,, \eta \rangle_{HS(n)}\,\, > \,\, 2-\varepsilon\,. \nonumber
\end{eqnarray}
Moreover, by (\ref{eq555555555}) and (\ref{eq4444444444444})\,,
\begin{equation}\label{eq2323232111}
\sum\limits_{i=1}^r \|u_i \xi \|_2^2 + \sum\limits_{i=1}^r \|\eta u_i\|_2^2 =2\,.
\end{equation}
Subtracting (\ref{eq2323232111}) from (\ref{eq44334434343})\,, we get
\begin{equation*}
\sum\limits_{i=1}^d \|u_i \xi -\eta u_i\|_2^2 + \sum\limits_{i=d+1}^r \left(\|u_i \xi\|_2^2+ \|\eta u_i\|_2^2\right)< \varepsilon\,.
\end{equation*}
Furthermore, since $\xi=\xi^*$ and $\eta=\eta^*$\,, we get by taking adjoints that
\begin{equation*}
\sum\limits_{i=1}^d \|u_i^* \xi -\eta u_i^*\|_2^2 + \sum\limits_{i=d+1}^r \left(\|u_i \xi\|_2^2+ \|\eta u_i\|_2^2\right)< \varepsilon\,.
\end{equation*}
This proves $(a)$ with $B(\varepsilon)=M_n(\mathbb{C})$ and ${\mathcal H}(\varepsilon)=HS(n)$\,.

We now prove $(b)$\,. Given a positive integer $n$\,, let $\varepsilon_n\colon ={1}/{n^2}$ and set $B_n\colon = B(\varepsilon_n)$ and ${\mathcal H}_n\colon = {\mathcal H}(\varepsilon_n)$\,. Then $B_n$ is a unital C$^*$-algebra and ${\mathcal H}_n$ a $B_n$-Hilbert bimodule. Moreover, there exist elements $u_1^{(n)}\,, \ldots \,, u_r^{(n)}\in B_n$ and unit vectors ${\xi}^{(n)}\,, {\eta}^{(n)}\in {\mathcal H}(n)$ such that the operator $u^{(n)}\colon = \sum_{i=1}^r a_i \otimes u_i^{(n)} \in A\otimes {B_n}$ is unitary and the following inequalities hold:
\begin{equation*}
\left\|u_i^{(n)} \xi_n -\eta_n u_i^{(n)}\right\|< {1}/{n}\,, \quad \left\|\left(u_i^{(n)}\right)^* \eta_n -\xi_n \left(u_i^{(n)}\right)^*\right\|< {1}/{n}\,, \qquad 1\leq i\leq d\end{equation*}
respectively,
\begin{equation*}
\left\|\left(u_i^{(n)}\right)^* \xi_n\right\|< {1}/{n}\,, \quad \left\|\eta_n u_i^{(n)}\right\|< {1}/{n}\,, \quad \left\|\left(u_i^{(n)}\right)^* \eta_n\right\|< {1}/{n}\,, \quad \left\|\xi_n \left(u_i^{(n)}\right)^*\right\|< {1}/{n}\,, \qquad d+1\leq i\leq r\,.
\end{equation*}
Now $(b)$ follows from $(a)$ by a standard ultraproduct construction (see, e.g., \cite{Ha1}).
\end{proof}

\begin{lemma}\label{lem7878788}
Let $A$\,, $\tau$\,, $a_1\,, \ldots\,, a_d$ and $T$ be as in Theorem \ref{th66767} $(i)$\,, $(ii)$ and $(iii)$\,, and assume that $\|T\|_{\text{cb}}=1$\,. Then
\begin{enumerate}
\item [$(a)$] There exist a finite von Neumann algebra $N$ with a normal, faithful tracial state $\tau_N$\,, a projection $p\in N$ and elements $v_1\,, \ldots\,, v_d\in {(1-p)Np}$ such that the operator $v\colon = \sum_{i=1}^d a_i\otimes v_i \in A\otimes N$ is a partial isometry satisfying
    \[ v^*v=1_A\otimes p\,, \quad vv^*=1_A\otimes (1-p)\,. \]
\item [$(b)$] There exist a finite von Neumann algebra $P$ with a normal, faithful tracial state $\tau_P$ and elements $w_1\,, \ldots\,, w_d\in P$ such that the operator $w\colon = \sum_{i=1}^d a_i\otimes w_i\in A\otimes P$ is unitary.
\end{enumerate}
\end{lemma}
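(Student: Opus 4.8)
The plan is to turn the bimodule data supplied by Lemma~\ref{lem667888}$(b)$ into a finite von Neumann algebra, to exhibit the partial isometry of part~$(a)$ inside it, and then to derive part~$(b)$ by compressing to a corner.

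First I would fix, via Lemma~\ref{lem667888}$(b)$, a unital C$^*$-algebra $B$, a Hilbert $B$-bimodule $\mathcal H$, elements $u_1,\dots,u_r\in B$ and unit vectors $\xi,\eta\in\mathcal H$ such that $u:=\sum_{i=1}^r a_i\otimes u_i$ is unitary in $A\otimes B$, with $u_i\xi=\eta u_i$ and $u_i^*\eta=\xi u_i^*$ for $1\le i\le d$, while $u_i\xi=\eta u_i=u_i^*\eta=\xi u_i^*=0$ for $d<i\le r$. Combining $u^*u=uu^*=1_{A\otimes B}$ with hypothesis $(ii)$ of Theorem~\ref{th66767} (and with Lemma~\ref{lem6678}) one records the identity
\begin{equation*}
\sum_{i,j=1}^{r} a_i^*a_j\otimes u_i^*u_j=1_A\otimes 1_B=\frac1d\sum_{i=1}^{d}a_i^*a_i\otimes 1_B ,
\end{equation*}
together with the identity obtained from it by replacing every $a_i,u_i$ by $a_i^*,u_i^*$.

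The core of $(a)$ is to produce, from $(\mathcal H,B,\xi,\eta)$, a finite von Neumann algebra $N$ with a normal faithful tracial state $\tau_N$, a projection $p\in N$ with $\tau_N(p)=\tfrac12$, and elements $v_1,\dots,v_d\in(1-p)Np$ arising from $\tfrac1{\sqrt d}u_1,\dots,\tfrac1{\sqrt d}u_d$, such that $v:=\sum_{i=1}^d a_i\otimes v_i$ satisfies $v^*v=1_A\otimes p$ and $vv^*=1_A\otimes(1-p)$. The von Neumann algebra $N$ is built by a GNS-type construction: one puts $\xi$ in one corner $p$ and $\eta$ in the complementary corner $1-p$ of a $2\times2$ matrix amplification of the C$^*$-algebra generated by the left and right actions of $B$ on $\mathcal H$, takes the state given there by the averaged vector functionals $\langle\,\cdot\,\xi,\xi\rangle$ and $\langle\,\cdot\,\eta,\eta\rangle$, performs the GNS construction and passes to the weak closure. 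The relations $u_i\xi=\eta u_i$ ($i\le d$) make the elements $v_i$ land in the off-diagonal corner $(1-p)Np$, and the vanishing relations for $i>d$ make exactly those indices disappear when the displayed identity and its adjoint are fed through this structure; what survives is precisely $v^*v=1_A\otimes p$, $vv^*=1_A\otimes(1-p)$, which is $(a)$.

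For $(b)$: applying $\tau_A\otimes\tau_N$ to $v^*v=1_A\otimes p$ and to $vv^*=1_A\otimes(1-p)$ and using traciality gives $\tau_N(p)=\tfrac12=\tau_N(1-p)$, so, $N$ being finite, $p\sim 1-p$; choose $t\in N$ with $t^*t=p$, $tt^*=1-p$. Put $P:=pNp$ with normalized trace $\tau_P:=2\,\tau_N|_{pNp}$ and $w_i:=t^*v_i$. Since $v_i\in(1-p)Np$ and $t^*\in pN(1-p)$ we get $w_i\in pNp=P$ and $v_i=tw_i$, whence $v=(1_A\otimes t)\,w$ with $w:=\sum_{i=1}^d a_i\otimes w_i\in A\otimes P$. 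As $w=(1_A\otimes p)w=w(1_A\otimes p)$, one has $w^*w=w^*(1_A\otimes p)w=w^*(1_A\otimes t^*t)w=v^*v=1_A\otimes p=1_{A\otimes P}$, and compressing $(1_A\otimes t)ww^*(1_A\otimes t^*)=vv^*=1_A\otimes(1-p)$ by $1_A\otimes t^*t$ likewise yields $ww^*=1_A\otimes p=1_{A\otimes P}$. Thus $w$ is a unitary of the required form.

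The main obstacle is the construction of $N$ in the third paragraph: producing an honest finite von Neumann algebra together with a faithful normal tracial state out of the abstract bimodule. Faithfulness can be handled as in the proof of Lemma~\ref{lem7000000000000} (cf.\ Remark~\ref{re6700005}); the genuinely delicate point is \emph{traciality} of the assembled state, which fails for arbitrary vectors and must be traced back, through the ultraproduct in the proof of Lemma~\ref{lem667888}, to the Hilbert--Schmidt bimodules $L^2(M_n(\mathbb C),\mathrm{Tr})$ over the matrix algebras and to the self-adjointness of the vectors chosen there, for which the relevant vector functionals are genuine normalized traces.
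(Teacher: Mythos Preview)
Your overall plan coincides with the paper's---pass to a $2\times2$ matrix setting, build a tracial state, perform GNS, then compress for~(b)---and your derivation of~(b) from~(a) is essentially the paper's, modulo one small gap: from $\tau_N(p)=\tau_N(1-p)$ alone one cannot conclude $p\sim1-p$ in a non-factor. You need equality of the \emph{central-valued} traces, which follows by applying $\tau_A\otimes\mathrm{id}_N$ (rather than $\tau_A\otimes\tau_N$) to the identities in~(a), yielding $\sum_iv_i^*v_i=p$ and $\sum_iv_iv_i^*=1-p$.

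The genuine gap is in~(a), precisely at the point you yourself flag: traciality. You propose to obtain it by going back through the ultraproduct to the Hilbert--Schmidt bimodules and the self-adjointness of the vectors there, but the vector functional $a\mapsto\langle a\xi,\xi\rangle_{HS}=\mathrm{Tr}(a\xi^2)$ is \emph{not} a trace on $M_n(\mathbb C)$ for a general self-adjoint $\xi$, so that route does not work as stated. The paper's mechanism is different and purely abstract. One works in $M_2(B)$ acting on the bimodule $M_2(\mathcal H)$ and sets
\[
\zeta:=\tfrac1{\sqrt2}\begin{pmatrix}\xi&0\\0&\eta\end{pmatrix},\qquad s_i:=\begin{pmatrix}0&0\\u_i&0\end{pmatrix}\ (1\le i\le r),\qquad e:=\begin{pmatrix}1_B&0\\0&0\end{pmatrix}.
\]
The whole point of this device is that the intertwining relations of Lemma~\ref{lem667888}(b) become a genuine \emph{commutation}: $s_i\zeta=\zeta s_i$ and $s_i^*\zeta=\zeta s_i^*$ for every $i$ (both sides vanish when $i>d$). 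Hence $c\zeta=\zeta c$ for all $c$ in the C$^*$-algebra $C$ generated by the $s_i$, and $\phi(c):=\langle c\zeta,\zeta\rangle$ is tracial by the one-line computation $\phi(c^*c)=\langle c\zeta,c\zeta\rangle=\langle\zeta c,\zeta c\rangle=\langle\zeta cc^*,\zeta\rangle=\langle cc^*\zeta,\zeta\rangle=\phi(cc^*)$, using only that the right action is a $*$-representation. One then takes $N:=\pi_\phi(C)''$, $p:=\pi_\phi(e)$, $v_i:=\pi_\phi(s_i)$; faithfulness of the GNS trace is the standard fact for tracial states, and $v_i=0$ for $i>d$ follows from $\phi(s_i^*s_i)=\tfrac12\|u_i\xi\|^2=0$. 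Your description of the ambient algebra as ``generated by the left and right actions of $B$ on $\mathcal H$'' misses this mechanism: it is the off-diagonal placement of the $u_i$ inside $M_2(B)$ that converts intertwining to commutation and thereby manufactures the trace; no mixture of left and right $B$-actions is involved. (The factor $1/\sqrt d$ you attach to the $u_i$ is spurious as well, since Lemma~\ref{lem6678} already gives $\sum_{i=1}^ru_i^*u_i=1_B$.)
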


\begin{proof}
Let $r\colon = \text{dim}(A)$ and choose $a_1\,, \ldots\,, a_r\in A$ as in Lemma \ref{lem6678}. Further, let $B$\,, ${\mathcal H}$\,, $u_1\,, \ldots\,, u_r$ and $\xi$\,, $\eta$ be as in Lemma \ref{lem667888} $(b)$\,. In particular, the operator $u\colon = \sum_{i=1}^r a_i\otimes u_i$ is a unitary in $A\otimes B$\,.

Note that $M_2({\mathcal H})$ is an $M_2(B)$-bimodule by standard matrix multiplication, and $M_2({\mathcal H})$ is a Hilbert space with norm
\[ \left\|\left(
\begin{array}
[c]{cc}%
\sigma_{11} & \sigma_{12}\\
\sigma_{21} & \sigma_{22}
\end{array}
\right)\right\|^2_{M_2({\mathcal H})}=\sum\limits_{i, j=1}^2 \|\sigma_{ij}\|^2\,, \quad \sigma_{ij}\in {\mathcal H}\,, 1\leq i, j\leq 2\,. \]
Set $\zeta\colon = \frac{1}{\sqrt{2}}\left(
\begin{array}
[c]{cc}%
\xi & 0\\
0 & \eta
\end{array}
\right)\in M_2({\mathcal H})$ and $s_i\colon = \left(
\begin{array}
[c]{cc}%
0 & 0\\
u_i & 0
\end{array}
\right)\in M_2(B)$\,, for all $1\leq i\leq r$\,. Then $\|\zeta\|=1$\,, and by (\ref{eq332233228}) and (\ref{eq3322332298}) it follows that
\begin{eqnarray}
&& s_i \zeta = \zeta s_i\,, \,\, s_i^* \zeta = \zeta s_i^*\,, \quad 1\leq i\leq d\,,\label{eq333333333333333333}\\
&& s_i \zeta =s_i^* \zeta =0\,, \qquad \qquad d+1\leq i\leq r\,.\label{eq3333333333333333334}
\end{eqnarray}
Further, set $e\colon = \left(
\begin{array}
[c]{cc}%
1_B & 0\\
0 & 0
\end{array}
\right)\in M_2(B)$\,. Then $e$ is a projection and by (\ref{eq999999})\,,
\begin{equation}\label{eq6666666666666666}
\sum\limits_{i=1}^r s_i^* s_i=e\,, \quad \sum\limits_{i=1}^r s_i s_i^*= 1_{M_2(B)}-e\,.
\end{equation}
Next, set $s\colon = \sum_{i=1}^r a_i\otimes s_i$\,. Since $u$ is a unitary operator, it follows that
\begin{equation}\label{eq7777777777}
s^* s =1_A\otimes e\,, \quad s s^*= 1_A\otimes (1_{M_2(B)}-e)\,.
\end{equation}
Let $C$ denote the C$^*$-algebra generated by $s_1\,, \ldots\,, s_d$ in $M_2(B)$\,. By (\ref{eq6666666666666666})\,, both $e$ and $1_{M_2(B)}-e$ belong to $C$\,, and hence $1_{M_2(B)}\in C$\,. Moreover, by (\ref{eq333333333333333333}) and (\ref{eq3333333333333333334})\,, \begin{equation}\label{eq99999999999}
c \zeta = \zeta c\,, \quad c\in C\,.
\end{equation}
Define now a state $\phi$ on $C$ by
$\phi(c)\colon = \langle c \zeta\,, \zeta\rangle_{M_2({\mathcal H})}$\,, for all $c\in C$\,. Note that $\phi$ is tracial, since \[ \phi(c^* c)=\langle c \zeta\,, c \zeta\rangle_{M_2({\mathcal H})}=\langle \zeta c\,, \zeta c\rangle_{M_2({\mathcal H})}=\langle \zeta c c^*\,, \zeta\rangle_{M_2({\mathcal H})}=\langle c c^* \zeta\,, \zeta\rangle_{M_2({\mathcal H})}=\phi(c c^*)\,, \quad c\in C\,. \]
Let $(\pi_{\phi}\,, H_{\phi}\,, \xi_{\phi})$ be the GNS-representation of $C$ with respect to $\phi$\,. Then $N\colon = {\pi_{\phi}(C)}^{\prime \prime}$ is a finite von Neumann algebra with normal, faithful tracial state $\tau_N$ given by $\tau_N(x)\colon = \langle x \xi_{\phi}\,, \xi_{\phi}\rangle_{H_{\phi}}$\,, for all $x\in N$\,. Moreover,
$\phi(c)=\tau_N(\pi_{\phi}(c))$\,, for all $c\in C$\,.

Now set $v_i\colon = \pi_{\phi}(s_i)$\,, for all $1\leq i\leq r$\,. By (\ref{eq3333333333333333334}), it follows that $\tau_N(v_i^* v_i)=\phi(s_i^* s_i)=0$\,, for all $d+1\leq i\leq r$\,, and hence
\begin{equation}\label{eq55555555555555555555}
v_i=0\,, \quad d+1\leq i\leq r\,.
\end{equation}
Next set $p\colon = \pi_{\phi}(e)$\,. Then $p$ is a projection in $N$\,. By (\ref{eq6666666666666666}) and (\ref{eq55555555555555555555}) we infer that
\begin{equation}\label{eq555555555555}
\sum\limits_{i=1}^d v_i^* v_i=p\,, \quad \sum\limits_{i=1}^d v_i v_i^*=1_N-p\,.
\end{equation}
Finally, set $v\colon = ({\text{id}_A}\otimes {\pi_{\phi}})(s)=\sum_{i=1}^r a_i\otimes v_i=\sum_{i=1}^d a_i\otimes v_i$\,. Then, by (\ref{eq7777777777}) it follows that
$v^*v =1_A\otimes p$ and $v v^*=1_A\otimes (1_N-p)$\,.
This proves part $(a)$\,.

To prove $(b)$\,, we note first that by (\ref{eq555555555555})\,, $p$ and $1_N-p$ have the same central valued trace, and therefore they are equivalent (as projections in $N$) (see, e.g., \cite{KR} (Vol. II, Chap. 8)). In particular, $\tau_N(p)=\tau_N(1_N-p)=1/2$\,.
Choose now $t\in N$ such that $t^*t=p$ and $tt^*=1_N-p$\,, and set $w_i\colon = t^* v_i$\,, for all $1\leq i\leq d$\,. Then
\[ \sum\limits_{i=1}^d w_i^* w_i=\sum\limits_{i=1}^d w_i w_i^* =p\,, \]
and the operator $w\colon =\sum_{i=1}^d a_i \otimes w_i$ satisfies
\[ w^* w=w w^*=1_A\otimes p\,. \]
Hence $(b)$ follows from $(a)$ by setting $P\colon = pNp$ and defining $\tau_P(x)\colon = 2 \tau_N(x)$\,, for all $x\in P$\,.
\end{proof}

{\bf Proof of Theorem \ref{th66767}}: By (\ref{eq8877887788787}) and Lemma \ref{lem667} we have that $\|T\|_{\text{cb}}\leq C(T)=1$\,. If we assume by contradiction that $\|T\|_{\text{cb}}=1$\,, then by Lemma \ref{lem7878788} $(b)$\,, there exist a finite von Neumann algebra $P$ with a normal, faithful tracial state $\tau_P$ and elements $w_1\,, \ldots\,, w_d\in P$ such that the operator $w\colon = \sum_{i=1}^d a_i\otimes w_i$ is unitary in $A\otimes P$ . By the hypothesis of Theorem \ref{th66767} (cf. $(iv)$ and $(v)$), the additional assumptions that $d\geq 2$ and the set $\{a_i^*a_j: 1\leq i, j\leq d\}$ is linearly independent do hold. Therefore, we can proceed almost as in the proof of Corollary \ref{cor1}. Namely, we have $w^*w=\sum_{i, j=1}^d a_i^* a_j \otimes w_i^* w_j$\,.
Therefore, using $(ii)$ we deduce that
\[ 0_{A\otimes P}=w^* w-1_A\otimes 1_P=\sum\limits_{i, j=1}^d a_i^* a_j \otimes \left(w_i^* w_j -\frac{1}{d} \delta_{ij} 1_P\right)\,. \]
Hence, by $(v)$ we conclude that
\[ w_i^* w_i =\frac{1}{d} \,\delta_{ij} 1_P\,, \quad 1\leq i, j\leq d\,. \]
This implies that $\sqrt{d} w_1$ and $\sqrt{d} w_2$ are two isometries in the finite von Neumann algebra $P$, having orthogonal ranges. This is impossible. Therefore $\|T\|_{\text{cb}}< 1$ and the proof is complete.\qed

%\vspace*{0.2cm}
{\bf Proof of Theorem \ref{th6000000000006}}:

\noindent $(1)$ Consider $A= M_3(\mathbb{C})$\,, $\tau=\tau_3$\,, $d=3$\,, and
let $a_1\,, a_2\,, a_3\in M_3(\mathbb{C})$ be given by
\[ a_1=\sqrt{\frac{3}{2}}\left(
\begin{array}
[c]{ccc}%
0 & 0 & 0\\
0 & 0 & -1\\
0 & 1 & 0
\end{array}
\right),  \quad a_2=\sqrt{\frac{3}{2}}\left(
\begin{array}
[c]{ccc}%
0 & 0 & 1\\
0 & 0 & 0\\
-1 & 0 & 0
\end{array}
\right)\,, \quad a_3=\sqrt{\frac{3}{{2}}}\left(
\begin{array}
[c]{ccc}%
0 & -1 & 0\\
1 & 0 & 0\\
0 & 0 & 0
\end{array}
\right)\,. \]
Define $T_1\colon M_3(\mathbb{C})\rightarrow OH(3)$ by
\[ T_1(x)\colon = (\tau(a_1^* x)\,, \tau(a_2^* x)\,, \tau(a_3^* x))\,,\quad x\in M_3(\mathbb{C})\,. \]

\noindent $(2)$ Consider $A=l^\infty(\{1, \ldots, 4\})$\,, $\tau(c)=(c_1+\ldots +c_4)/4$\,, $c=(c_1\,, \ldots, c_4)\in A$\,, $d=2$\,, and let $a_1\,, a_2\in A$ be given by
\[ a_1\colon=(\sqrt{2}\,, {\sqrt{2}}/{\sqrt{3}}\,, {\sqrt{2}}/{\sqrt{3}}\,, {\sqrt{2}}/{\sqrt{3}})\,, \quad a_2\colon = (0, 2/{\sqrt{3}}\,, (2/{\sqrt{3}})\omega\,,  (2/{\sqrt{3}})\bar{\omega})\,, \]
where $\omega\colon = e^{{i 2 \pi}/3}$, and $\bar{\omega}$ is its complex conjugate.
Define $T_2\colon l^\infty(\{1, \ldots, 4\})\rightarrow OH(2)$ by
\[ T_2(x)\colon = (\tau(a_1^* x)\,, \tau(a_2^* x))\,,\quad x\in l^\infty(\{1, \ldots, 4\})\,. \]

In each of the cases $(1)$ and $(2)$ it is easily checked that conditions $(i)$\,, $(ii)$\,, $(iv)$ and $(v)$ in the hypothesis of Theorem \ref{th66767} are verified. Hence, the maps $T_i$  (defined above according to condition $(iii)$ of Theorem \ref{th66767}) will satisfy $C(T_i)=1 > \|T_i\|_{\text{cb}}$\,, for $i=1\,, 2$\,.
Note that $a_1\,, a_2\,, a_3$ in case $(1)$ are scalar multiples of the matrices considered in Example \ref{exp1}, while $a_1, a_2$ in case $(2)$ correspond, up to a scalar factor, to the diagonal $4\times 4$ matrices in Example \ref{exp2} with $s=1/3$\,.\qed

\vspace*{0.3cm}

\thanks{}

\end{document}